\newtheorem{prop}{Proposition}[section]
\newtheorem{theo}[prop]{Theorem}
\newtheorem{propo}[prop]{Proposition}
\newtheorem{rem}[prop]{Remark}
\newtheorem{coro}[prop]{Corollary}
\newtheorem{lem}[prop]{Lemma}
\DeclareMathOperator{\vol}{\mathrm{vol}}
\DeclareMathOperator{\Hom}{{\mathrm Hom}}
\DeclareMathOperator{\Pic}{\mathrm{Pic}}
\newcommand*{\defeq}{\coloneqq}
\newcommand*{{\sbullet}}{\scalebox{2}{$\cdot$}}
\def\Z{{\mathbb Z}}
\def\R{{\mathbb R}}
\def\Q{{\mathbb Q}}
\def\calt{\mathcal{T}}
\begin{document}
\title[Stringy Chern classes of singular toric varieties]
{Stringy Chern classes of singular toric varieties 
and their applications}

\author[Victor Batyrev]{Victor Batyrev}
\address{Mathematisches Institut, Universit\"at T\"ubingen, 
Auf der Morgenstelle 10, 72076 T\"ubingen, Germany}
\email{victor.batyrev@uni-tuebingen.de}

\author[Karin Schaller]{Karin Schaller}
\address{Mathematisches Institut, Universit\"at T\"ubingen, 
Auf der Morgenstelle 10, 72076 T\"ubingen, Germany}
\email{karin.schaller@uni-tuebingen.de}


\begin{abstract}
Let $X$ be a normal projective $\Q$-Gorenstein
variety with at worst log-terminal singularities. 
We prove a formula expressing  the total stringy Chern class 
of a generic complete intersection in $X$ via
the total stringy Chern class of $X$. 
This formula is motivated by its applications to mirror symmetry 
for Calabi-Yau complete intersections 
in toric varieties. 
We compute stringy Chern classes and give 
a combinatorial interpretation of the 
stringy Libgober-Wood identity for arbitrary 
projective $\Q$-Gorenstein toric varieties.
As an application we derive a new combinatorial identity 
relating $d$-dimensional reflexive polytopes 
to the number $12$ in dimension $d \geq 4$.
\end{abstract}

\maketitle

\thispagestyle{empty}
\section*{Introduction} 
The orbifold (or stringy) Euler number has been introduced by 
Dixon, Harvey, Vafa, and Witten \cite{DHVW85} as a new topological 
invariant of singular varieties motivated by string theory. It was observed 
by Hirzebruch and Hofer \cite{HH90} that the orbifold 
Euler number $e_{\rm orb}(X)$ of a singular Calabi-Yau variety 
$X$ can be identified
with the usual Euler number $e(Y)$ of its 
{\em Calabi-Yau desingularization} $Y$,
i.e., a desingularization $\rho: Y \to X$ 
such that $Y$ is a smooth Calabi-Yau manifold. 
Such a desingularization  $\rho$ might not be unique, but
the Euler number $e(Y)$ does not
depend on its choice. Moreover,  
the  orbifold (or stringy)  Hodge numbers $h^{p,q}_{\rm str}(X)$ of 
the singular Calabi-Yau variety $X$ can be defined in the same way 
as the Hodge 
numbers $h^{p,q}(Y)$ of the smooth Calabi-Yau manifold 
$Y$. 
The fact that the stringy Hodge numbers do not depend on the choice 
of a Calabi-Yau desingularization  
plays an important role in mirror symmetry for Calabi-Yau varieties 
\cite{BD96}.  

More generally, a desingularization 
$\rho: Y\to X$ of a normal projective variety 
$X$ with at worst canonical Gorenstein singularities is called {\em crepant}
if $\rho^* K_X = K_Y$. Using nonarchimedian integration \cite{Bat98}, 
one can prove  that the Euler number $e(Y)$ and the 
Hodge numbers
$h^{p,q}(Y)$ do not depend on the choice of the 
crepant desingularization
$\rho$. These numbers  are called 
{\em stringy Euler number} $e_{str}(X)$ respectively {\em stringy Hodge numbers}
$h^{p,q}_{str}(X)$ of the singular 
variety $X$. 

It was discovered by Allufi \cite{Alu04}
that not only the Euler 
number $e(Y)$ (the top Chern class $c_d(Y)$) of $Y$, but also  
the push-forwards of all other Chern classes $\rho_*c_k(Y) \in A^k(X)_\Q$ 
$(0 \leq k \leq d)$ are independent of a crepant desingularization  
$\rho : Y\to X$ of a $d$-dimensional variety $X$. 
This observation led to the notion of stringy Chern classes
$c_k^{str}(X) \in A^k(X)_\Q$ $(0 \leq k \leq d)$ of singular varieties $X$.
They have been introduced and  
developed in \cite{Alu05,dFLNU07}. We remark that the 
stringy Chern class $c_{d-1}^{str}(X)$ of a singular  $d$-dimensional 
projective variety $X$ appears in the stringy 
version of the Libgober-Wood formula \cite{Bat00}. 

In this paper, we are interested in  stringy Chern classes
of generic  complete intersections in toric varieties. Our interest
is motivated by the well-known construction of many examples
for Calabi-Yau varieties and their mirrors 
as hypersurfaces and complete intersections
in Gorenstein toric Fano varieties \cite{Bat94,BB97, BD96}. 
Another motivation for our paper was the search for a
higher dimensional generalization of the well-known combinatorial 
identities for reflexive polytopes of dimension $2$ and $3$. 
This generalization and its connection
to the Libgober-Wood formula for smooth manifolds \cite{LW90} 
(see also \cite{Sal96}) 
has been considered independently in the recent
preprint of  Godinho, von Heymann, and Sabatini \cite{GvHS16}. Independently 
some results of our paper were obtained in the 
preprint of Douai \cite{Dou16} motivated by Hertling's conjecture about
the variance of the spectrum of tame regular functions. 

\medskip

Let $X$ be a $d$-dimensional normal projective 
$\Q$-Gorenstein variety
with at worst {\em log-terminal
singularities}, i.e.,  
the canonical class $K_X$ of $X$ is a $\Q$-Cartier
divisor and for some 
desingularization $\rho : Y \rightarrow X$ 
of $X$, whose exceptional locus 
is a union of  smooth irreducible divisors $D_1, \ldots, D_s$
with only normal crossings,  one has  
\begin{align*}
K_Y=\rho^*  K_X  + \sum_{i=1}^s a_iD_i 
\end{align*}
for some rational numbers $a_i >-1$ $(1 \leq i \leq s)$. 
The above desingularization 
$\rho$ will be called {\em log-desingularization}
of $X$. 
For any nonempty subset $J \subseteq I \defeq\{1,\ldots,s\}$ we 
define  $D_J$ to be 
the subvariety $\cap_{j \in J} D_j$ together with
its  closed embedding $e_J: D_J \hookrightarrow Y$
and set $D_{\emptyset}\defeq Y$. 
We remark that 
the subvariety $D_J \subseteq Y$ is either empty 
or a smooth projective subvariety of $Y$ of codimension $|J|$. 

If $V$ is an arbitrary smooth 
$d$-dimensional projective variety, then 
the {\em $E$-polynomial} of $V$ is defined as
\begin{align*} 
E\left( V; u, v \right) \defeq \sum_{0 \leq p,q \leq d} 
(-1)^{p+q} h^{p,q}\left( V \right) u^{p} v^{q},
\end{align*}
where $h^{p,q} \left( V \right)$ are the Hodge numbers of $V$. 
Furthermore, the Euler 
number $e(V) = c_d(V)$ of $V$ equals $E\left(V;1,1 \right)$.
The {\em stringy $E$-function} of the singular variety 
 $X$ is a rational algebraic function in two 
variables $u, v$ defined by the formula 
\begin{align} \label{Estr}
E_{str}(X; u, v) \defeq \sum_{\emptyset \subseteq J \subseteq I}
E(D_J; u,v) \prod_{j \in J} \left( \frac{uv-1}{(uv)^{a_j +1} -1} -1 \right).
\end{align}
One can prove that the function $E_{str}(X; u, v)$ 
does not depend on the choice of the log-de\-sin\-gu\-la\-ri\-za\-tion $\rho$
\cite[Theorem 3.4]{Bat98}. As a special case, this formula implies 
$E_{str}(X;u,v) = E(Y;u,v)$
if $\rho$ is a crepant desingularization of $X$.
The {\em top stringy Chern class}  
(or the {\em stringy Euler number}) of $X$ is defined to be 
the limit of the stringy $E$-function \eqref{Estr}, i.e.,
\begin{align} \label{estr}
c^{str}_d(X) \defeq \lim_{u,v \to 1}  E_{str}(X; u, v) =
\sum_{\emptyset \subseteq J \subseteq I} c_{d-\lvert J \rvert}(D_J)
\prod_{j \in J} \left( \frac{-a_j}{a_j+1}\right),
\end{align}
where $ c_{d-\lvert J \rvert}(D_J) =e(D_J)$  denotes the Euler number 
of the smooth
subvariety $D_J \subseteq Y$. 
We regard the stringy top Chern class  $c^{str}_d(X)$ as a special case of 
the {\em $k$-th stringy Chern class} defined in  
\cite{Alu05,dFLNU07}  for any 
$k$ $(0 \leq k \leq d)$. 
In this paper, we apply the following formula 
for the computation of the { $k$-th 
stringy Chern class} of a 
$d$-dimensional normal 
projective $\Q$-Gorenstein variety $X$ 
with at worst log-terminal singularities 
using the  usual Chern classes of smooth projective 
subvarieties $D_J \subseteq Y$: 
\begin{align*}
c_{k}^{str} (X) \defeq \rho_* \bigg(
\sum_{\emptyset \subseteq J \subseteq I}
{e_J}_* c_{k-\lvert J \rvert}(D_J)
\prod_{j \in J} \left( \frac{-a_j}{a_j+1}\right) \! \! \bigg)  
\in A_{d-k}(X)_\Q.
\end{align*}
Here, $A_{d-k}(X)= A^k(X)$ denotes the 
Chow group of $(d-k)$-dimensional cycles on $X$ modulo 
rational equivalence and $A_{\sbullet}(X)_{\Q} \defeq \bigoplus_{k=0}^d A_{d-k}(X)_{\Q}$ 
with $A_{d-k}(X)_\Q \defeq A_{d-k}(X) 
\otimes \Q$ the rational Chow ring of $X$.
Moreover, $\rho_* : A_{d-k}(Y) \to A_{d-k}(X)$ and
${e_J}_* :A_{d-k}(D_J) \to A_{d-k}(Y)$ are push-forward
homomorphisms corresponding to the proper birational morphism 
$\rho : Y \to X$ respectively the closed embeddings 
$e_J : D_J \hookrightarrow Y$.
It is important to note that the above definition
of stringy Chern classes
is also independent of the 
log-desingularization $\rho$.

The paper is organized as follows:

In Section \ref{section1}, we prove that the 
well-known formula expressing  
the total Chern class of smooth
complete intersections via the total Chern class of 
the ambient smooth variety $V$  
remains valid also for the
total stringy Chern class $ c_{\sbullet}^{str}(\cdot)$ 
of generic hypersurfaces and complete intersections 
in the singular ambient variety $X$, i.e.,
\begin{align*} 
i_* c_{\sbullet}^{str} \left(Z_1 \cap \ldots \cap Z_r  \right) 
= c_{\sbullet}^{str}(X) .\prod_{j=1}^r\left[Z_j\right]( 1+\left[Z_j\right])^{-1},
\end{align*}
where $Z_1,\ldots, Z_r$ are generic {semiample}
Cartier divisors
on $X$ and  $i: Z_1 \cap \ldots \cap Z_r \hookrightarrow X $ 
is the corresponding closed embedding. 
In particular, we show that 
the top stringy Chern class (or stringy Euler number) of generic  
semiample Cartier divisors $Z$ on $X$ 
can be computed  via  stringy Chern classes of $X$ by 
\begin{align*} 
c_{d-1}^{str} (Z) =
e_{str}(Z) = [Z].c_{d-1}^{str} (X) - [Z]^2.c_{d-2}^{str} (X) + 
\ldots = 
\sum_{k=1}^d (-1)^{k-1} [Z]^k.c_{d-k}^{str} (X).
\end{align*}
We give a similar formula for the top stringy Chern class 
$c_{d-r}^{str} \left(Z_1 \cap \ldots \cap Z_r  \right)$ of 
complete intersections
$Z_1 \cap \ldots \cap Z_r $, where $Z_1,\ldots, Z_r$ 
are generic semiample Cartier divisors on the singular variety $X$.

In Section \ref{section2},  we look at intersection numbers of stringy Chern classes 
with $\Q$-Cartier divisors. A particular case of such an intersection number 
has appeared in the stringy version of the Libgober-Wood identity
\begin{align} \label{eq1}
\frac{d^2}{d u^2} E_{str}\left(X;u,1\right)\Big\vert_{u=1} =
\frac{3d^2-5d}{12} c_d^{str}(X) + \frac{1}{6} c_1(X).c_{d-1}^{str}(X) 
\end{align}
in \cite[Theorem 3.8]{Bat00}, where the 
intersection number $c_1(X).c_{d-1}^{str}(X)$
has been defined as
\begin{align}\label{eq12}
c_1(X).c_{d-1}^{str}(X)\defeq
\sum_{\emptyset \subseteq J \subseteq I} \rho^* c_1(X).{e_J}_*c_{d-\lvert J \rvert-1}(D_J)
\prod_{j \in J} \left( \frac{-a_j}{a_j+1}\right)
\end{align}
and its independence on the choice of the
log-desingularization $\rho$ has been shown in 
\cite[Corollary 3.9]{Bat00}. 
In this paper,  
we consider more general  intersection numbers 
$ [Z_1].\; \ldots \; .[Z_k].c_{d-k}^{str} (X)$, where $Z_1,\ldots,Z_k$ 
are arbitrary  $\Q$-Cartier divisors.  These intersection numbers 
can be defined by a similar formula
\begin{align*}
[Z_1].\; \ldots \; .[Z_k].c_{d-k}^{str} (X)  \defeq 
\sum_{\emptyset \subseteq J \subseteq I}
\rho^*[Z_1].\; \ldots \; .\rho^*[Z_k]. {e_J}_*c_{d-\lvert J \rvert-k}(D_J)
\prod_{j \in J} \left( \frac{-a_j}{a_j+1}\right).
\end{align*}
We give a proof for its independence  
on the choice of $\rho$ without  
using the definition of stringy Chern classes. 

In Section \ref{section3}, we apply the results of the previous sections to 
hypersurfaces and complete intersections in normal
projective $\Q$-Gorenstein toric varieties. 
For this purpose, we compute the $k$-th stringy Chern class of
a $d$-dimensional projective $\Q$-Gorenstein 
toric variety $X$ associated with a fan $\Sigma$ 
of rational polyhedral cones in $N_{\R}$
as a linear combination of classes of 
torus-invariant cycles $X_\sigma$ 
corresponding to $k$-dimensional cones, i.e.,
\begin{align*} 
c_{k}^{str} (X) = \sum_{\sigma \in \Sigma(k)} v(\sigma) 
\cdot \left[X_{\sigma}\right],
\end{align*}
where $\Sigma(k)$ is the set of all $k$-dimensional cones of $\Sigma$
and $N_{\R}$ is a real vector space obtained by an 
extension of a $d$-dimensional lattice $N \cong \Z^d$.
The coefficients $v(\sigma)$   
are positive integers defined by the formula 
\begin{align*}
v(\sigma)\defeq k! \cdot \vol_{k} \left(\Theta_{\sigma}\right), 
\end{align*}
where  $\vol_{k} \left(\Theta_{\sigma}\right)$ is the 
$k$-dimensional volume 
of the lattice polytope $\Theta_{\sigma}$ obtained as the convex hull 
of the origin
and the primitive lattice generators of all $1$-dimensional 
faces of $\sigma$ with respect to the sublattice 
$\left< \sigma \right>_{\R} \cap N$. 
We apply this formula for stringy 
Chern classes of toric varieties to compute 
intersection numbers
$[D_1].\; \ldots \; .[D_k].c_{d-k}^{str} (X)$
via mixed volumes of faces of divisor-associated convex lattice polytopes,
where $D_1, \ldots, D_k$ are semiample torus-invariant $\Q$-Cartier divisors.

In Section \ref{section4}, we are interested in a combinatorial interpretation 
of the stringy Libgober-Wood identity \eqref{eq1} 
for $d$-dimensional projective $\Q$-Gorenstein toric varieties. 
Such a toric
variety $X$ is defined by a complete 
fan $\Sigma$ in $ N_{\R}$ such that 
there exists 
a piecewise linear function $\kappa: N_{\R} \rightarrow \R$ 
that is linear on each cone $\sigma$ of $\Sigma$ and has 
value $-1$ on every 
primitive lattice generator of a $1$-dimensional cone of $\Sigma$. A
projective $\Q$-Gorenstein 
toric variety  $X$ is called {\em log-Fano toric variety}
if its anticanonical
divisor $-K_X$ is ample, i.e.,  if $\kappa$
is strictly convex. In this case  
\begin{align*}
\Delta \defeq \{ x \in N_{\R} \; \vert \; \kappa(x) \geq -1 \} 
\end{align*} 
is a convex lattice polytope whose vertices are primitive 
lattice generators of 
$1$-dimensional cones of $\Sigma$. 
Let $M \defeq \Hom(N, \Z)$ be the dual lattice  to $N$
and $\langle \cdot,\cdot \rangle : M \times N \to \Z$ 
the natural pairing,
which extends to a pairing 
$\langle \cdot,\cdot \rangle : M_{\R} \times N_{\R} \to \R$.
The {\em dual polytope} $\Delta^*$ of $\Delta$ 
is a convex polytope with rational vertices
defined as 
\begin{align*}
\Delta^* \defeq \{ y \in M_\R  \; \vert \; \left< y,x \right> \geq -1 \; 
 \forall x \in \Delta\}.
\end{align*}  
The polytope $\Delta$ is called {\em reflexive} if all vertices
of the dual polytope $\Delta^*$ belong to the lattice $M$. 
The latter case happens if and
only if the log-Fano toric variety $X$ 
is a {\em Gorenstein toric Fano variety} 
(i.e., its anticanonical
divisor is an ample Cartier divisor). 
Gorenstein toric Fano varieties $X$ of dimension $d$ 
associated with reflexive polytopes $\Delta$ are used in mirror symmetry 
as ambient spaces
for Calabi-Yau hypersurfaces \cite{Bat94, BB97, BD96}. 
The stringy Euler number 
of a generic ample Calabi-Yau hypersurface $Z$ in $X$ 
is combinatorially computable via
\begin{align*}
e_{str}(Z)= c_{d-1}^{str} \left( Z \right)
= \sum_{k=0}^{d-3}  (-1)^{k}  
\sum_{\theta \preceq \Delta \atop \dim\left(\theta \right)=k+1}
v(\theta) \cdot v\left(\theta^*\right),
\end{align*} 
where the face 
$\theta^{*} \defeq \{y \in \Delta^{*} \vert 
\left< y,x\right> = -1 \; \forall x \in \theta \}$
of the dual reflexive polytope $\Delta^{*}$ 
is called {\em dual face} to a face $\theta$ of $\Delta$
(written $\theta \preceq \Delta$).

If $X$ is a toric log del Pezzo surface associated to a
convex lattice polytope $\Delta$
(cf. LDP-polytope \cite{KKN10}), then the 
stringy Libgober-Wood identity \eqref{eq1} for $X$  
is equivalent to the combinatorial equality 
\begin{align*}
v\left(\Delta \right) +  v \left(\Delta^* \right)
= 12 \sum_{n \in \Delta \cap N} \left( \kappa(n) + 1 \right)^2.
\end{align*}
In particular, 
one always has $v\left(\Delta \right) +  v \left(\Delta^* \right) \geq 12$
and equality holds, i.e.,
\begin{align} \label{d2}
v(\Delta) + v\left( \Delta^{*} \right)=12, 
\end{align}
if and only if 
$\Delta$ is a reflexive polytope.

If $\Delta$ is a $3$-dimensional reflexive polytope, then the stringy Euler
number of a Calabi-Yau hypersurface in the associated Gorenstein toric variety $X$ is 
$24$ and one obtains the identity 
\begin{align} \label{d3}
\sum_{\theta \preceq \Delta \atop \dim\left(\theta\right)=1} 
v\left(\theta\right) \cdot v\left(\theta^{*}\right)=24.
\end {align}

Our aim in Section \ref{section5} is to give  a generalization of 
the above identities for reflexive polytopes of dimension $d \geq 4$. 
For this purpose, we use the 
{\em Ehrhart power series} of an arbitrary $d$-dimensional 
lattice polytope $\Delta \subseteq N_{\R}$ defined as 
\begin{align*}
P_{\Delta}(t) \defeq \sum_{k\geq 0} 
\left\vert k\Delta \cap N \right\vert t^k, 
\end{align*}
where $\left\vert k\Delta \cap N \right\vert$ denotes the 
number of lattice points in $k\Delta$. This series 
can be written as the rational function 
\begin{align*} 
P_{\Delta}(t) =\frac{\psi_d\left(\Delta\right) t^d + 
\ldots+\psi_1\left(\Delta\right)t
+\psi_0\left(\Delta\right)}{(1-t)^{d+1}},
\end{align*}
where 
$\psi_{\alpha} \left(\Delta\right)$ 
are nonnegative integers for all $0 \leq \alpha \leq d$
\cite[Theorem 2.11]{Bat93}.
We show that the stringy Libgober-Wood 
identity \eqref{eq1} 
for a Gorenstein toric Fano 
variety defined by the fan of cones 
over faces of a reflexive polytope 
$\Delta$ is equivalent
to the combinatorial identity
\begin{align} \label{sLWrefl} 
\sum_{\alpha \in [0,d] \cap \Z} \psi_{\alpha}\left(\Delta\right)  
\left(\alpha-\frac{d}{2} \right)^{2}= 
\frac{d}{12} v\left(\Delta\right)+ \frac{1}{6} 
\sum_{\theta \preceq \Delta \atop  \dim(\theta)=d-2} 
v(\theta) \cdot v \left(\theta^{*}\right).
\end{align}

For reflexive polytopes $\Delta$ 
of dimension $2$ and $3$ this identity
is equivalent to 
Equation \eqref{d2} respectively \eqref{d3}, but  
for reflexive polytopes $\Delta$ of dimension 
$d \geq 4$ 
Equation \eqref{sLWrefl} 
is not anymore symmetric with respect to the polar duality between 
$\Delta$ and $\Delta^*$ (i.e., the equality for $\Delta$ is not
equivalent to the one for $\Delta^*$). If $d=4$, then 
\eqref{sLWrefl} is equivalent to 
\begin{align*} 
12 \cdot \left\vert \partial 
\Delta \cap N \right\vert= 2 \cdot v\left( \Delta \right) 
+  
\sum_{\theta \preceq \Delta \atop  \dim(\theta)=2} 
v(\theta) \cdot v \left(\theta^{*}\right),
\end{align*}
where $\partial \Delta$ denotes the boundary of the polytope $\Delta$
and $\left\vert \partial 
\Delta \cap N \right\vert$ the number of lattice points in $\partial \Delta$.
In addition, we consider some generalizations of the above identities 
for Gorenstein polytopes. 
  
\section{Stringy Chern classes of complete intersections} \label{section1}
First, we note that for projective varieties $X$ 
the general definition of the total stringy Chern class 
$c_{\sbullet}^{str}(X) \in A_{\sbullet}(X)_\Q$  
due to de Fernex, Luperico, Nevins, and Uribe \cite{dFLNU07} can be 
simplified as follows:

\begin{propo} \label{new-string}
Let $X$ be a $d$-dimensional normal projective $\Q$-Gorenstein variety 
with at worst log-terminal singularities and 
$\rho : Y \rightarrow X$ 
a log-desingularization of $X$. 
Then the total stringy Chern class of $X$ can be computed via 
total Chern classes $c_{\sbullet} \left(D_J \right)$
of smooth projective subvarieties $D_J \subseteq Y$ by  
\begin{align} \label{strtc}
 c_{\sbullet}^{str}(X) \defeq \rho_* 
\bigg( \sum_{\emptyset \subseteq J \subseteq I} {e_J}_* 
c_{\sbullet} \left(D_J \right) \prod_{j \in J} 
\left( \frac{-a_j}{a_j+1}\right) \! \! \bigg)   \in A_{\sbullet}(X)_\Q.
\end{align}
In particular, 
one obtains
\begin{align} \label{strck}
c_{k}^{str}(X)\defeq \rho_* \bigg(
\sum_{\emptyset \subseteq J \subseteq I} 
{e_J}_* c_{k-\lvert J \rvert}(D_J) 
\prod_{j \in J} \left( \frac{-a_j}{a_j+1}\right) \! \! \bigg)   \in A_{d-k}(X)_\Q
\end{align}
and has
\begin{align*} 
c_{k}^{str}(X)= \rho_* c_k(Y)  \in A_{d-k}(X)_\Q
\end{align*}
if $\rho:  Y \to X$ is a crepant 
log-desingularization $(0 \leq k \leq d)$.
\end{propo}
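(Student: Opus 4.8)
The plan is to obtain \eqref{strtc} directly from the definition of the total stringy Chern class of de Fernex, Lupercio, Nevins, and Uribe in \cite{dFLNU07}. Their class is defined for an arbitrary normal $\Q$-Gorenstein variety with at worst log-terminal singularities; the first step is to invoke its evaluation on a fixed log-desingularization $\rho : Y \to X$ with $K_Y = \rho^{*}K_X + \sum_{i\in I}a_i D_i$, which presents $c_{\bullet}^{str}(X)$ as the $\rho_{*}$-image of a sum over the open strata $D_J^{\circ} := D_J\setminus\bigcup_{i\notin J}D_i$ of their Chern--Schwartz--MacPherson classes $c_{SM}(\mathbf{1}_{D_J^{\circ}}) \in A_{\bullet}(Y)_{\Q}$, each weighted by the scalar $\prod_{j\in J}(a_j+1)^{-1}$. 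This is the class-valued analogue of the fact that \eqref{Estr} can be rewritten, using the disjoint decomposition $D_J = \bigsqcup_{J\subseteq J'\subseteq I}D_{J'}^{\circ}$, as $\sum_{J}E(D_J^{\circ};u,v)\prod_{j\in J}\tfrac{uv-1}{(uv)^{a_j+1}-1}$. I expect the one delicate point to be precisely this matching of conventions: one must check that the motivic/arc-theoretic construction of \cite{dFLNU07}, when specialized to a log resolution, really does yield these \emph{scalar} weights with no residual divisor-class factors surviving on the strata $D_{J'}$. Once this is settled, the remainder is formal bookkeeping.

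For the bookkeeping, I would first M\"obius-invert the decomposition above to get the identity of constructible functions $\mathbf{1}_{D_J^{\circ}} = \sum_{J'\supseteq J}(-1)^{|J'|-|J|}\mathbf{1}_{D_{J'}}$, and then apply additivity of the Chern--Schwartz--MacPherson transformation together with $c_{SM}(\mathbf{1}_{D_{J'}}) = {e_{J'}}_{*}c_{\bullet}(D_{J'})$ (valid since $D_{J'}$ is smooth, where $c_{\bullet}(D_{J'}) = c(TD_{J'})\cap[D_{J'}]$) to obtain $c_{SM}(\mathbf{1}_{D_J^{\circ}}) = \sum_{J'\supseteq J}(-1)^{|J'|-|J|}{e_{J'}}_{*}c_{\bullet}(D_{J'})$. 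Substituting this into the open-stratum formula and interchanging the two summations, the coefficient of ${e_{J'}}_{*}c_{\bullet}(D_{J'})$ becomes $\sum_{J\subseteq J'}(-1)^{|J'|-|J|}\prod_{j\in J}(a_j+1)^{-1} = \prod_{j\in J'}\bigl((a_j+1)^{-1}-1\bigr) = \prod_{j\in J'}\tfrac{-a_j}{a_j+1}$, where the middle equality is the expansion of $\prod_{j\in J'}(x_j+y_j)$ with $x_j = (a_j+1)^{-1}$ and $y_j = -1$. This is exactly \eqref{strtc}; note that $D_J$ is empty, hence $c_{\bullet}(D_J)=0$, for all but finitely many $J$, so the sum is finite.

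Finally, \eqref{strck} follows by extracting the degree-$(d-k)$ component: since $\dim D_J = d-|J|$, the part of $c_{\bullet}(D_J)$ in $A_{d-k}(D_J)_{\Q}$ is $c_{k-|J|}(D_J)$, which vanishes unless $|J|\le k$, so only finitely many strata contribute for each fixed $k$. If $\rho$ is moreover crepant, then $\sum_{i\in I}a_i D_i = 0$ forces $a_i = 0$ for every $i\in I$ (the $D_i$ being distinct prime divisors), so $\tfrac{-a_j}{a_j+1}=0$ and every term with $J\neq\emptyset$ drops out of \eqref{strtc}, leaving $c_{\bullet}^{str}(X) = \rho_{*}c_{\bullet}(D_{\emptyset}) = \rho_{*}c_{\bullet}(Y)$ and hence $c_{k}^{str}(X)=\rho_{*}c_k(Y)$. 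Independence of $c_{\bullet}^{str}(X)$ from the chosen log-desingularization is then automatic, being inherited from the corresponding property in \cite{dFLNU07}.
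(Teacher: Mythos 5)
Your proposal is correct and follows essentially the same route as the paper: both start from the de Fernex--Lupercio--Nevins--Uribe open-stratum formula with weights $\prod_{j\in J}(a_j+1)^{-1}$, convert to the closed strata $D_{J'}$ via the decomposition $\mathds{1}_{D_J}=\sum_{J'\supseteq J}\mathds{1}_{D_{J'}^{\circ}}$, and use ${e_{J'}}_{*}c(TD_{J'})\cap[D_{J'}]=c(\mathds{1}_{D_{J'}})$ for the smooth closed strata; your explicit M\"obius inversion and the coefficient identity $\sum_{J\subseteq J'}(-1)^{|J'|-|J|}\prod_{j\in J}(a_j+1)^{-1}=\prod_{j\in J'}\frac{-a_j}{a_j+1}$ is just the paper's resummation read in the opposite direction. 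The degree-extraction and crepant-case arguments also match.
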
 

\begin{proof} The definition of 
the total stringy 
Chern class given by  de Fernex, Luperico, Nevins, and Uribe in  
\cite{dFLNU07}
uses the group homomorphism 
of  MacPherson $c: F(Y) \rightarrow A_{\sbullet}(Y)$
from the group $F(Y)$ 
of constructible functions on $Y$ to the Chow group $A_{\sbullet}(Y)$ of $Y$.
If  $\mathds{1}_Y$ is the characteristic function of the 
smooth variety $Y$, then
$c( \mathds{1}_Y) = c_{\sbullet}(Y)$. Using 
the stratification of $Y$  by locally closed subsets 
$D_J^{\circ}:= D_J \setminus \left(\cup_{i \in I \setminus J} D_i \right)$  
$(\emptyset \subseteq J \subseteq I)$ their definition looks as follows:
\begin{align} \label{fernex}
c_{\sbullet}^{str}(X)\defeq  \rho_*   \bigg( \sum_{\emptyset \subseteq J \subseteq I} 
c \left(\mathds{1}_{D_J^{\circ}} \right) \prod_{j \in J} 
\left(\frac{1}{a_j+1}\right) \! \! \bigg).
\end{align}
Using the stratification $\mathds{1}_{D_J}= 
\sum_{J' \supseteq J} \mathds{1}_{D_{J'}^{\circ}}$ 
and 
$c \left( \mathds{1}_{D_J} \right)= 
\sum_{J' \supseteq J} c \left(\mathds{1}_{D_{J'}^{\circ}} \right)$ for all 
$\emptyset \subseteq J \subseteq I$, we conclude
\begin{align*}
\sum_{\emptyset \subseteq J \subseteq I} 
c \left(\mathds{1}_{D_J^{\circ}} \right)\prod_{j \in J} 
\left( \frac{1}{a_j+1}\right) & = \sum_{\emptyset \subseteq J \subseteq I} 
\bigg( \sum_{J' \supseteq J} c \left(\mathds{1}_{D_{J'}^{\circ}} \right)\! \! \bigg)  
\prod_{j \in J} \left( \frac{1}{a_j+1} -1\right) \\ & = 
\sum_{\emptyset \subseteq J \subseteq I} 
c\left(\mathds{1}_{D_J}\right) \prod_{j \in J} 
\left( \frac{-a_j}{a_j+1}\right).
\end{align*}
It remains 
to apply $\rho_*$ to the above equality and the property   
 $c \left(\mathds{1}_{D_J}\right)= {e_J}_*  
 c_{\sbullet} \left( D_J \right) $, which follows from the commutative 
diagram
\begin{center}
\makebox[0pt]{
\begin{xy}
\xymatrix{
F \left(D_J \right) \ar[r]^{}   \ar[d]_{c} & F \left(Y \right) \ar[d]^c \\
A_{\sbullet}\left(D_J \right) \ar[r]_ {{e_J}_*} &   A_{\sbullet}\left(Y \right)  ,
}
\end{xy}
}
\end{center}
where ${e_J}_* : A_{\sbullet}(D_J) 
\to A_{\sbullet}(Y)$ is the push-forward homomorphism corresponding to the 
closed embedding $ {e_J}: D_J \hookrightarrow Y$. 

\end{proof}

\begin{rem} {\rm
The important ingredient in
the definition of the total stringy  Chern class 
$c_{\sbullet}^{str}(X)$  (resp. $k$-th stringy Chern class 
$c_{k}^{str}(X)$)  is its independence  
on the choice of the log-desingularization $\rho$. This property 
was proved in } \cite[Proposition 3.2]{dFLNU07}. 
\end{rem}

Let $V$ be a smooth $d$-dimensional normal projective
variety and $Z$ a smooth divisor on $V$ with 
the closed embedding $i :  Z \hookrightarrow V$.
Using the exact sequence of vector bundles
$ 0 \to \calt_Z \to i^* \calt_V \to {\mathcal O}_Z(Z) \to 0 ,$
one obtains a formula that 
computes the total Chern class of $Z$ in terms of the total
Chern class of the ambient variety $V$:
\begin{align} \label{ismooth}
i_* c_{\sbullet} (Z) = c_{\sbullet}(V) .[Z]( 1+ [Z])^{-1} 
=  c_{\sbullet}(V). \bigg( \sum_{k=1}^{\infty} (-1)^{k-1} [Z]^k \bigg) ,
\end{align}
where $\calt_{\cdot}$ denotes the tangent bundle,
$[Z]$ the class of $Z$ in $A_{d-1}(V)$,
and $c_{\sbullet} (V) \defeq \sum_k c_k(V)$ the total Chern class of $V$
(cf. \cite[Example 3.2.12]{Ful98}). 
In particular, the Euler number of $Z$ 
can be computed as 
\begin{align} \label{ez}
e(Z) = c_{d-1}(Z) =  \sum_{k=1}^{d}(-1)^{k-1} [Z]^k. c_{d-k}(V). 
\end{align}

We show  that the same formulas 
hold for the total stringy Chern class respectively the top stringy Chern class
of generic hypersurfaces in singular varieties.

\begin{theo} \label{thesing2}
Let $X$ be a normal projective $\Q$-Gorenstein variety 
with at worst log-terminal 
singularities and 
$Z$ a generic semiample Cartier divisor on $X$.
Then
the total stringy Chern class of $Z$ is 
\[ i_*  c_{\sbullet}^{str} (Z)= c_{\sbullet}^{str}(X) .[Z] \left(1+ [Z]\right)^{-1}, \]
where $i :  Z \hookrightarrow X$ is the closed embedding.
\end{theo}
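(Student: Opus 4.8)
**The plan is to reduce the statement to the smooth complete-intersection formula \eqref{ismooth} applied fiberwise on a log-desingularization.** Choose a log-desingularization $\rho: Y \to X$ with exceptional divisors $D_1,\dots,D_s$ and discrepancies $a_i$. Since $Z$ is a generic semiample Cartier divisor, its pullback $\rho^*Z$ is a semiample (base-point-free, by genericity) divisor on $Y$, so a generic member $W \in |\rho^*Z|$ is a smooth divisor on $Y$ meeting all the strata $D_J$ transversally; in particular $W \cap D_J$ is either empty or a smooth divisor on $D_J$, and the induced map $\rho|_W : W \to Z$ is a log-desingularization of $Z$ whose exceptional divisors are exactly the $D_i \cap W$ with the \emph{same} discrepancies $a_i$ (transversality guarantees the discrepancy computation is unchanged). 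This last point — that genericity of $Z$ lets us arrange $W$ transverse to the whole stratification and that the discrepancies are inherited — is the technical heart of the argument and the step I expect to be the main obstacle; it requires a Bertini-type argument over the semiample (hence base-point-free after passing to a multiple, but genericity handles the linear system directly) system on $Y$, together with an adjunction computation $K_W = (K_Y + W)|_W$ compared with $\rho|_W^* K_Z$.

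Granting this, apply Proposition \ref{new-string} to both $X$ and $Z$. For $Z$, using the log-desingularization $\rho|_W : W \to Z$ with strata $D_J \cap W =: D_J^W$ (and $D_\emptyset^W = W$), the proposition gives
\begin{align*}
i_* c_{\sbullet}^{str}(Z) = \rho_* \biggl( \sum_{\emptyset \subseteq J \subseteq I} {e_J^W}_* c_{\sbullet}(D_J^W) \prod_{j\in J}\Bigl(\frac{-a_j}{a_j+1}\Bigr) \biggr),
\end{align*}
where I have already pushed forward along the closed embedding $W \hookrightarrow Y$ and used $i \circ \rho|_W = \rho \circ (W \hookrightarrow Y)$. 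Now for each fixed $J$, the variety $D_J^W$ is a smooth divisor on the smooth projective variety $D_J$, cut out by the class $(e_J)^*[\rho^*Z] = (e_J)^*\rho^*[Z]$; so the smooth formula \eqref{ismooth} applied on $D_J$ yields
\begin{align*}
{e_J^W}_* c_{\sbullet}(D_J^W) = c_{\sbullet}(D_J) . (e_J)^*\rho^*[Z]\,\bigl(1 + (e_J)^*\rho^*[Z]\bigr)^{-1}
\end{align*}
as classes on $D_J$, where $e_J^W$ here denotes the embedding $D_J^W \hookrightarrow D_J$.

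**Finally, assemble the pieces using the projection formula.** Push forward the last identity along $e_J : D_J \hookrightarrow Y$: by the projection formula, ${e_J}_*\bigl(c_{\sbullet}(D_J).(e_J)^*\rho^*[Z]\,(1+(e_J)^*\rho^*[Z])^{-1}\bigr) = \bigl({e_J}_* c_{\sbullet}(D_J)\bigr).\rho^*[Z]\,(1+\rho^*[Z])^{-1}$ in $A_{\sbullet}(Y)_\Q$. Summing over $J$ with the weights $\prod_{j\in J}(-a_j/(a_j+1))$ and pushing forward along $\rho$, the factor $\rho^*[Z](1+\rho^*[Z])^{-1}$ — which does not depend on $J$ — can be pulled out, and another application of the projection formula (for $\rho$, using $\rho_*\rho^*[Z] = [Z]$ together with compatibility of $\rho_*$ with the module structure) gives
\begin{align*}
i_* c_{\sbullet}^{str}(Z) = \rho_*\biggl( \sum_{\emptyset\subseteq J\subseteq I}{e_J}_* c_{\sbullet}(D_J)\prod_{j\in J}\Bigl(\frac{-a_j}{a_j+1}\Bigr)\biggr) . [Z]\,(1+[Z])^{-1} = c_{\sbullet}^{str}(X).[Z]\,(1+[Z])^{-1},
\end{align*}
where the last equality is exactly \eqref{strtc}. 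One should remark that all the infinite geometric series $[Z](1+[Z])^{-1} = \sum_{k\ge 1}(-1)^{k-1}[Z]^k$ are in fact finite since $[Z]^k = 0$ for $k > d$, so no convergence issue arises; and the claimed consequence $c_{d-1}^{str}(Z) = \sum_{k=1}^d (-1)^{k-1}[Z]^k.c_{d-k}^{str}(X)$ follows by extracting the component of \eqref{ismooth}-type in $A_0$.
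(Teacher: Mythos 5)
Your proposal is correct and follows essentially the same route as the paper: both arguments pass to a log-desingularization $\rho:Y\to X$, use Bertini and adjunction to see that the (generic) smooth divisor $Z'\subseteq Y$ lying over $Z$ gives a log-desingularization of $Z$ with the same discrepancies $a_i$ and strata $D_J\cap Z'$, then apply the smooth hypersurface formula \eqref{ismooth} on each $D_J$ and assemble via two applications of the projection formula together with Proposition \ref{new-string}. The only cosmetic difference is that the paper works directly with $Z'=\rho^{-1}(Z)$ for generic $Z$ rather than choosing a generic member of $|\rho^*Z|$, which amounts to the same thing.
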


\begin{proof}
Let  $\rho : Y \rightarrow X$ be a log-desingularization of $X$ and
$K_Y=\rho^* K_X + \sum_{i=1}^s a_iD_i$. 
By Theorem of Bertini, $Z' \defeq \rho^{-1}(Z)$ is a smooth
divisor on $Y$. By the  
 adjunction formula  
$K_{Z'} = \big( K_Y + Z' \big)  \vert_{Z'}$,  
we obtain 
$ K_{Z'}=\rho^*K_Z + \sum_{i=1}^s a_i D_i', $
where $D_i' \defeq D_i\cap Z'$.
Define $D_J '\defeq \cap_{j \in J} D_j'$ and
 note that $D_J' = D_J \cap Z' \subseteq D_J$
is a smooth divisor on $D_J$. Let ${e'_J}_* : A_{\sbullet}\left(D'_J\right) 
\to A_{\sbullet}\left(Z'\right)$ be the push-forward homomorphism corresponding to
 the closed embedding
$e_J': D_J' \hookrightarrow Z'$. 
Consider the commutative diagram
\begin{center}
\makebox[0pt]{
\begin{xy}
\xymatrix{
D_J  \ar@{^{(}->}[r]^{e_J}^{e_J} & Y \ar[r]^{\rho} & X \\
D_J' \ar@{^{(}->}[r]_{e_J'} \ar@{^{(}->}[u]_{i_J} & Z' \ar[r]_{\rho_Z}  & Z  , \ar@{^{(}->}[u]_i
}
\end{xy}
}
\end{center}
where $\rho_Z: Z' \rightarrow Z$ and $i_J: D'_J \rightarrow D_J$ 
are restrictions of $\rho$ respectively $i$.
We apply  Equation \eqref{ismooth} to the smooth divisor  
$D_J' \subseteq D_J$ and obtain
\begin{align*}
i_* {\rho_Z}_*   {e_J'}_*  
c_{\sbullet} \big( D_J' \big)
= \rho_*  {e_J}_*   {i_J}_* 
c_{\sbullet} \big( D_J' \big)  
= \rho_*  {e_J}_*  \big( c_{\sbullet} \left( D_J \right).
\big[D_J'\big] \big( 1+ \big[D_J'\big] \big)^{-1}  \big).
\end{align*}
Using the projection formula twice provides
\begin{align*}
{e_J}_*  \big( c_{\sbullet} \left( D_J \right).
\big[D_J'\big] \big(1+ \big[D_J'\big] \big)^{-1} \big)
={e_J}_*  c_{\sbullet} \left( D_J \right). \big[Z'\big]
\big(1+ \big[Z'\big] \big)^{-1}
\end{align*}
and
\begin{align*}
\rho_* \big( {e_J}_* c_{\sbullet} \left( D_J \right). \big[Z' \big]
\big(1+ \big[Z'\big] \big)^{-1}\big)
= \rho_*   {e_J}_*  c_{\sbullet} \left( D_J \right).[Z] \left(1+ [Z]\right)^{-1} 
\end{align*}
because ${e_J}^* Z' =D_J'$, $\rho^*  Z  = Z'$ 
and the pull-backs ${e_J}^*, \rho^*$ are homomorphisms.
Therefore, we get
\begin{align*}
 i_* \rho_*  {e_J'}_*  
 c_{\sbullet}  \big(D_J' \big) 
= \rho_*   {e_J}_*c_{\sbullet} \left( D_J \right) .[Z] \left(1+ [Z]\right)^{-1} .
\end{align*}
By applying Proposition \ref{new-string} to $Z$ and $X$,  we  
conclude
\begin{align*}
i_*  c_{\sbullet}^{str}(Z)
&=  i_*  \rho_*   \bigg( \sum_{\emptyset \subseteq J \subseteq I} 
{e_J'}_* \bigg( c_{\sbullet} \left( D_J' \right)
\prod_{j \in J} \left( \frac{-a_j}{a_j+1}\right) \! \! \bigg) \! \! \bigg)  \\
&= \rho_* \bigg( \sum_{\emptyset \subseteq J \subseteq I} 
{e_J}_*  c_{\sbullet} \left( D_J\right) 
\prod_{j \in J} \left( \frac{-a_j}{a_j+1}\right) \! \! \bigg)   .[Z] \left(1+ [Z]\right)^{-1} \\
&= c_{\sbullet}^{str} (X). \left[Z\right] \left( 1+ [Z]\right)^{-1}.
\end{align*}
\end{proof}

\begin{coro} \label{esing2}
Let $X$ be a normal projective $\Q$-Gorenstein variety of dimension $d$ 
with at worst log-terminal 
singularities and 
$Z$ a generic semiample Cartier divisor on $X$. Then
the stringy Euler number of $Z$ is
\begin{align*}
e_{str}(Z) = c_{d-1}^{str}(Z) =  
\sum_{k=1}^{d}(-1)^{k-1} [Z]^k. c_{d-k}^{str}(X).
\end{align*}
If $\left[Z \right] = c_1 \left(X \right)$, 
$Z$ has trivial anticanonical class and 
the formula simplifies to
\begin{align*}
e_{str}(Z) = c_{d-1}^{str}(Z) = \sum_{k=1}^{d-2}(-1)^{k-1} [Z]^k. c_{d-k}^{str}(X).
\end{align*}
\end{coro}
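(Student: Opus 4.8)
The plan is to derive Corollary \ref{esing2} directly from Theorem \ref{thesing2} by extracting the appropriate graded piece of the identity for total stringy Chern classes. First I would apply $i_*$ to the equality $i_* c_{\sbullet}^{str}(Z) = c_{\sbullet}^{str}(X).[Z](1+[Z])^{-1}$ and expand the formal inverse as the geometric series $[Z](1+[Z])^{-1} = \sum_{k \geq 1} (-1)^{k-1}[Z]^k$, exactly as in Equation \eqref{ismooth}. Since $Z$ has dimension $d-1$, its top stringy Chern class is $c_{d-1}^{str}(Z) = e_{str}(Z)$, living in $A_0(Z)_\Q$; pushing this forward to $A_0(X)_\Q$ and comparing with the degree-$(d-1)$ (i.e.\ codimension-$(d-1)$, dimension-zero) component on the right-hand side gives
\begin{align*}
i_* c_{d-1}^{str}(Z) = \sum_{k=1}^{d} (-1)^{k-1} [Z]^k . c_{d-k}^{str}(X),
\end{align*}
where $c_{d-k}^{str}(X) \in A_k(X)_\Q$ and the intersection products $[Z]^k.c_{d-k}^{str}(X)$ land in $A_0(X)_\Q$; the sum truncates at $k=d$ because $[Z]^k.c_{d-k}^{str}(X) = 0$ for $k > d$ on dimension reasons. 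Identifying $A_0(X)_\Q$ with $\Q$ via degree yields the stated numerical identity $e_{str}(Z) = \sum_{k=1}^d (-1)^{k-1}[Z]^k.c_{d-k}^{str}(X)$.

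For the second formula, suppose $[Z] = c_1(X)$, so that $Z$ is a generic anticanonical (Calabi-Yau) hypersurface. By the adjunction argument already used in the proof of Theorem \ref{thesing2}, one has $K_{Z'} = \rho_Z^* K_Z + \sum_i a_i D_i'$ with $\rho^* Z = Z'$ and $\rho^* K_X = K_Y - \sum_i a_i D_i$; combining $[Z] = c_1(X) = -K_X$ with $\rho^*[Z] = [Z']$ shows $K_{Z'} = \rho_Z^*(K_Z)$ where $K_Z = (K_X + Z)|_Z = 0$, i.e.\ $Z$ has trivial anticanonical class. The point is then that the top two terms in the series vanish: the term $k=d$ is $(-1)^{d-1}[Z]^d = (-1)^{d-1} c_1(X)^d$, and the term $k=d-1$ is $(-1)^{d-2}[Z]^{d-1}.c_1(X) = (-1)^{d-2} c_1(X)^{d-1}.c_1(X) = (-1)^{d-2} c_1(X)^d$, so they cancel. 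More conceptually, since $c_1(Z) = c_1^{str}$ in degree one vanishes on the Calabi-Yau $Z$, the stringy Libgober-Wood context forces the contribution $[Z]^{d-1}.c_1^{str}(X)$ and $[Z]^d$ to cancel in pairs; one records this as the truncation of the sum at $k = d-2$, giving $e_{str}(Z) = \sum_{k=1}^{d-2}(-1)^{k-1}[Z]^k.c_{d-k}^{str}(X)$.

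The main obstacle I anticipate is purely bookkeeping rather than conceptual: one must be careful that the graded-piece extraction from the total-class identity is legitimate, i.e.\ that $i_* c_{\sbullet}^{str}(Z)$ is supported in codimensions $\geq 1$ (it is, since $Z$ is a divisor) and that the codimension-$(d-1)$ component of $c_{\sbullet}^{str}(X).[Z](1+[Z])^{-1}$ is precisely $\sum_{k=1}^{d}(-1)^{k-1}[Z]^k.c_{d-k}^{str}(X)$ with no stray contribution from $c_d^{str}(X)$ (which would need to be multiplied by $[Z]^0$, but the series has no constant term, so this does not occur). The cancellation of the top two terms in the Calabi-Yau case should be presented either via the explicit identity $c_1(X)^d = c_1(X)^{d-1}.c_1(X)$ or, more elegantly, by invoking that $c_1(Z) = 0$ implies $i_* c_{\sbullet}^{str}(Z)$ has vanishing component in codimension one relative to $X$-pullbacks—but since the cleanest route is the two-line algebraic cancellation, I would simply write out $(-1)^{d-1}[Z]^d + (-1)^{d-2}[Z]^{d-1}.c_1(X) = 0$ and conclude.
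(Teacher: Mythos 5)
Your proposal is correct and follows exactly the route the paper intends: the corollary is the dimension-zero graded piece of the total-class identity of Theorem \ref{thesing2}, with the Calabi--Yau case handled by the explicit cancellation $(-1)^{d-1}[Z]^d.c_0^{str}(X)+(-1)^{d-2}[Z]^{d-1}.c_1^{str}(X)=0$ when $[Z]=c_1(X)$. The only point worth making explicit is that this cancellation uses $c_0^{str}(X)=[X]$ and $c_1^{str}(X)=c_1(X)$, both of which follow from formula \eqref{strck} because $\rho_*$ annihilates the exceptional divisors $D_j$ and $\rho_*c_1(Y)=\rho_*(-\rho^*K_X-\sum_i a_iD_i)=c_1(X)$; the vague appeal to the Libgober--Wood context in your second paragraph is unnecessary and should be dropped in favor of the two-line algebraic cancellation you already wrote out.
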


The  formulas in Theorem \ref{thesing2} and
Corollary \ref{esing2} for 
total stringy Chern classes 
respectively top stringy Chern classes
of generic hypersurfaces can be generalized to
 generic complete intersections in singular varieties:  

\begin{theo} 
Let $X$ be a normal projective $\Q$-Gorenstein variety with at worst log-terminal 
singularities, $Z_1,\ldots,Z_r$ generic semiample Cartier divisors on $X$,
and $i :  Z_1 \cap \ldots \cap Z_r  \hookrightarrow X$ the closed embedding. Then
the total stringy Chern class of the complete intersection $Z_1 \cap \ldots \cap Z_r$ is
\begin{align*}
i_* c_{\sbullet}^{str} \left(Z_1 \cap \ldots \cap Z_r \right) = 
c_{\sbullet}^{str}(X).
\prod_{j=1}^r  \left[Z_j\right]\left(1+ \left[Z_j\right]\right)^{-1}. 
\end{align*}
\end{theo}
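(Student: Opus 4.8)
**The plan is to reduce the complete intersection case to the hypersurface case (Theorem \ref{thesing2}) by induction on $r$**, the number of divisors. For $r=1$ the statement is exactly Theorem \ref{thesing2}. For the inductive step, I would like to write $W \defeq Z_1 \cap \ldots \cap Z_{r-1}$, apply the inductive hypothesis to $W$, and then cut $W$ by the last divisor $Z_r$. The difficulty is that $W$ is in general \emph{not} a normal $\Q$-Gorenstein variety with log-terminal singularities of the type to which our machinery literally applies, so one cannot directly invoke Theorem \ref{thesing2} with $X$ replaced by $W$. Instead, the cleaner route is to imitate the proof of Theorem \ref{thesing2} directly, working on a single fixed log-desingularization and cutting by all $r$ generic members simultaneously.

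So, concretely: let $\rho : Y \to X$ be a log-desingularization with $K_Y = \rho^*K_X + \sum_{i=1}^s a_i D_i$. By Bertini, for generic semiample $Z_1,\ldots,Z_r$ the pullbacks $Z_j' \defeq \rho^{-1}(Z_j)$ are smooth, and moreover the scheme-theoretic intersection $W' \defeq Z_1' \cap \ldots \cap Z_r'$ is smooth of codimension $r$ in $Y$ and meets every stratum $D_J$ transversally, so that $D_J \cap W' = D_{J,1}' \cap \ldots \cap D_{J,r}'$ is a smooth codimension-$r$ complete intersection inside $D_J$, where $D_{J,j}' \defeq D_J \cap Z_j'$. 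The adjunction formula applied repeatedly gives $K_{W'} = \rho^* K_W + \sum_{i=1}^s a_i (D_i \cap W')$ with the \emph{same} discrepancies $a_i$, so $\rho_W : W' \to W$ is again a log-desingularization and the exceptional divisors $D_i \cap W'$ on $W'$ play the role of the $D_i$ on $Y$. Hence Proposition \ref{new-string} computes $c_{\sbullet}^{str}(W)$ as $\rho_{W*}$ of a sum over $J$ of pushforwards of $c_{\sbullet}(D_J \cap W')$ weighted by $\prod_{j\in J}\bigl(\tfrac{-a_j}{a_j+1}\bigr)$.

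The algebraic heart is then the smooth complete intersection formula (the iterate of \eqref{ismooth}): for a smooth codimension-$r$ complete intersection $D_J \cap W' \hookrightarrow D_J$ cut out by smooth divisors $D_{J,1}',\ldots,D_{J,r}'$, one has
\begin{align*}
(e_J^W)_* c_{\sbullet}(D_J \cap W') = c_{\sbullet}(D_J) . \prod_{j=1}^r [D_{J,j}']\bigl(1+[D_{J,j}']\bigr)^{-1},
\end{align*}
which follows by applying \eqref{ismooth} $r$ times (cutting one divisor at a time inside progressively smaller smooth ambient varieties). Exactly as in the proof of Theorem \ref{thesing2}, I then transport this identity through the commutative diagram relating the embeddings $D_J \cap W' \hookrightarrow D_J$, $D_J \hookrightarrow Y$, $W' \hookrightarrow Y$, and the two copies of $\rho$; the projection formula (used $r$ times now) together with the relations $e_J^* Z_j' = D_{J,j}'$ and $\rho^* Z_j = Z_j'$ lets me pull the factor $\prod_{j=1}^r [Z_j']\bigl(1+[Z_j']\bigr)^{-1}$ out past $e_{J*}$ and then $\rho_*$, converting it to $\prod_{j=1}^r [Z_j]\bigl(1+[Z_j]\bigr)^{-1}$ on $X$. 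Summing over $\emptyset \subseteq J \subseteq I$ with the weights $\prod_{j\in J}\bigl(\tfrac{-a_j}{a_j+1}\bigr)$ and using Proposition \ref{new-string} on both sides yields
\begin{align*}
i_* c_{\sbullet}^{str}(Z_1 \cap \ldots \cap Z_r) = c_{\sbullet}^{str}(X) . \prod_{j=1}^r [Z_j]\bigl(1+[Z_j]\bigr)^{-1}.
\end{align*}

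**The main obstacle I anticipate is the genericity/Bertini bookkeeping**: one must ensure that a generic choice of the $Z_j$ makes \emph{all} the intersections $Z_{j_1}' \cap \ldots \cap Z_{j_k}' \cap D_J$ simultaneously smooth of expected dimension for every subset of indices and every $J$, and that the adjunction computation of discrepancies genuinely produces the same $a_i$ at each stage. Since each $Z_j$ is semiample, its generic member is base-point-free on the locus where we need transversality (pulling back to $Y$ the linear system $|\rho^* \calo_X(Z_j)|$ is base-point-free enough to apply Bertini on $Y$ and on each smooth $D_J$), so this is a finite intersection of open dense conditions and hence attainable; but stating it carefully is the one genuinely delicate point. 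Everything else is a routine iteration of the $r=1$ argument.
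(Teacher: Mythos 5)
Your proof is correct, but it takes a more hands-on route than the paper. The paper's entire proof is a two-line induction on $r$: it observes that for each $r'$ the complete intersection $Z_1 \cap \ldots \cap Z_{r'}$ is a generic semiample hypersurface in $Z_1 \cap \ldots \cap Z_{r'-1}$ and simply reapplies Theorem \ref{thesing2} with the ambient variety replaced by the intermediate complete intersection. This implicitly uses that each such intermediate cut is again a normal projective $\Q$-Gorenstein variety with at worst log-terminal singularities --- exactly the point you flag as a difficulty and choose to avoid. Note, however, that your own adjunction computation resolves that worry: it exhibits $W' \to W$ as a log-desingularization of $W$ with the \emph{same} discrepancies $a_i > -1$, which is precisely the paper's definition of log-terminal, so the machinery does literally apply to $W$ (and this observation is what silently justifies the paper's shorter induction; you even need it yourself when you invoke Proposition \ref{new-string} for $W$). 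What your simultaneous argument buys is self-containedness --- a single fixed log-desingularization of $X$, the iterated normal-bundle formula $i_* c_{\sbullet}(W') = c_{\sbullet}(V).\prod_j [H_j](1+[H_j])^{-1}$, and one pass of the projection formula --- at the cost of the heavier Bertini bookkeeping ensuring all strata $D_J$ meet all the $Z_j'$ transversally at once, which you correctly identify and correctly dispose of as a finite intersection of dense open conditions. Both arguments are valid; yours makes explicit the verifications the paper leaves implicit.
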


\begin{proof} We apply induction on $r$ and use Theorem \ref{thesing2} 
$r$-times, since for any $2 \leq r' \leq r$ the complete intersection 
$Z_1 \cap \ldots \cap Z_{r'}$ is a generic hypersurface in 
$Z_1 \cap \ldots \cap Z_{r'-1}$.
\end{proof}

\begin{coro} \label{corcompl}
Let $X$ be a normal projective $\Q$-Gorenstein variety of dimension $d$ 
with at worst log-terminal 
singularities and 
$Z_1,\ldots,Z_r$ generic semiample Cartier divisors on $X$.
Then the stringy Euler number $c_{d-r}^{str}\left(Z_1\cap \ldots \cap Z_r\right)$
of the complete intersection $Z_1\cap \ldots \cap Z_r$ is
\begin{align*}
\sum_{k=0}^{d-r} (-1)^k \left[Z_1\right]. \; \ldots \;. \left[Z_r\right].
\bigg( \sum_{j_0,\ldots,j_k=1 \atop j_0 \leq \ldots \leq j_k}^r 
\left[Z_{j_0}\right]. \; \ldots \; .  \left[Z_{j_k}\right] \bigg) 
. c_{d-r-k}^{str}\left(X\right).
\end{align*}
\end{coro}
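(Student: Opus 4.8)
The plan is to extract the zero-dimensional piece of the total-stringy-Chern-class identity established in the preceding theorem. Write $W \defeq Z_1 \cap \ldots \cap Z_r$; by the Bertini-type argument already used in the proof of Theorem \ref{thesing2}, $W$ is again a normal projective $\Q$-Gorenstein variety with at worst log-terminal singularities, now of dimension $d-r$, so that its stringy Euler number is the degree of the $A_0(W)_\Q$-component $c_{d-r}^{str}(W)$ of $c_{\sbullet}^{str}(W)$. Since $i : W \hookrightarrow X$ is a closed embedding, hence proper, proper push-forward of $0$-cycles preserves degree; therefore the stringy Euler number of $W$ equals the degree of the $A_0(X)_\Q$-component of
\[
i_* c_{\sbullet}^{str}(W) \;=\; c_{\sbullet}^{str}(X) . \prod_{j=1}^r [Z_j] \left(1+[Z_j]\right)^{-1}.
\]

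It then remains to expand the right-hand side and retain the codimension-$d$ part. Each factor is the terminating geometric series $[Z_j]\left(1+[Z_j]\right)^{-1} = \sum_{m \geq 1} (-1)^{m-1}[Z_j]^m$, whence
\[
\prod_{j=1}^r [Z_j]\left(1+[Z_j]\right)^{-1} \;=\; \sum_{m_1,\ldots,m_r \geq 1} (-1)^{(m_1+\cdots+m_r)-r}\, [Z_1]^{m_1}.\;\ldots\;.[Z_r]^{m_r}.
\]
Grouping the summands by the value of $k \defeq (m_1-1)+\cdots+(m_r-1) \geq 0$, one sees that the codimension-$(r+k)$ part of this product is $(-1)^k\, [Z_1].\;\ldots\;.[Z_r]. h_k$, where
\[
h_k \;\defeq\; \sum_{1 \leq j_1 \leq \ldots \leq j_k \leq r} [Z_{j_1}].\;\ldots\;.[Z_{j_k}]
\]
is the complete homogeneous symmetric expression of degree $k$ in the divisor classes $[Z_1],\ldots,[Z_r]$, with the convention $h_0 = 1$. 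Since $c_{d-r-k}^{str}(X)$ has codimension $d-r-k$, multiplying it by $(-1)^k [Z_1].\;\ldots\;.[Z_r]. h_k$ produces a class of codimension $d$, i.e.\ in $A_0(X)_\Q$, and every codimension-$d$ contribution to the right-hand side arises this way for a unique $k$. Hence the $A_0(X)_\Q$-component of the right-hand side is $\sum_{k=0}^{d-r} (-1)^k [Z_1].\;\ldots\;.[Z_r]. h_k . c_{d-r-k}^{str}(X)$ — the sum being finite because $c_{d-r-k}^{str}(X)$ vanishes once $k > d-r$ — and taking degrees gives the asserted identity, the inner sum over $j_0 \leq \ldots \leq j_k$ in the statement being read as $h_k$.

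The argument is, in essence, bookkeeping with the grading on the Chow ring together with the telescoping of the geometric series, and I do not anticipate a real obstacle. The point that deserves a little care is the very first reduction, namely the identification of the stringy Euler number of $W$ with the degree of $i_* c_{d-r}^{str}(W)$: this requires that forming stringy Chern classes via \eqref{strtc}, pushing forward along the closed embedding $i$, and extracting the degree-zero part commute with one another, which follows from the compatibility of proper push-forward with degrees of $0$-cycles together with the fact that each $[Z_j].(-)$ acts as an operational class on $A_{\sbullet}(X)_\Q$, so that all the graded pieces appearing above are unambiguously defined. As an alternative, one may avoid introducing $W$ altogether and deduce the formula for arbitrary $r$ straight from Corollary \ref{esing2} by the same induction on $r$ used to prove the preceding theorem, at each step rewriting the stringy Euler number of $Z_1 \cap \ldots \cap Z_{r'}$ as that of a generic semiample hypersurface inside $Z_1 \cap \ldots \cap Z_{r'-1}$ and expanding.
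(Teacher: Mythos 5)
Your proof is correct and takes essentially the route the paper intends: Corollary \ref{corcompl} is presented as an immediate consequence of the preceding theorem, obtained exactly as you do by expanding each factor $[Z_j](1+[Z_j])^{-1}$ as a terminating geometric series, grouping by codimension into the complete homogeneous symmetric expressions $h_k$, and extracting the $A_0(X)_\Q$-component. Your reading of the inner sum as $h_k$ (rather than the literally printed $k+1$ indices $j_0,\ldots,j_k$) is the right one, as confirmed by the coefficient $\binom{k+r-1}{r-1}$ appearing in the specialization of Corollary \ref{corcompl2}.
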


\begin{coro} \label{corcompl2}
Let $X$ be a normal projective $\Q$-Gorenstein variety of dimension $d$ 
with at worst log-terminal 
singularities and 
$Z_1,\ldots,Z_r$ generic semiample Cartier divisors on $X$ 
such that 
$\left[Z\right] \defeq \left[Z_1\right]=\ldots= \left[Z_r\right]$.
Then the stringy Euler number $c_{d-r}^{str}\left(Z_1\cap \ldots \cap Z_r\right)$
of the complete intersection $Z_1\cap \ldots \cap Z_r$ is
\begin{align*}
c_{d-r}^{str}\left(Z_1\cap \ldots \cap Z_r\right) 
&= \sum_{k=0}^{d-r} (-1)^k \binom{k+r-1}{r-1} 
\left[Z\right]^{r+k}. c_{d-r-k}^{str}\left(X\right).
\end{align*}
\end{coro}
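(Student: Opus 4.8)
The plan is to deduce this from the Theorem above by specializing to $[Z_1]=\cdots=[Z_r]=[Z]$ and then reading off the top-dimensional component. With this hypothesis the product $\prod_{j=1}^r[Z_j]\left(1+[Z_j]\right)^{-1}$ collapses to $[Z]^r\left(1+[Z]\right)^{-r}$, so the Theorem gives
\[
i_*c_{\sbullet}^{str}\left(Z_1\cap\ldots\cap Z_r\right)=c_{\sbullet}^{str}(X).[Z]^r\left(1+[Z]\right)^{-r}
\]
in $A_{\sbullet}(X)_{\Q}$. First I would expand the inverse as a binomial series, which is finite because $[Z]^{m}=0$ for $m>d$: using $\binom{-r}{k}=(-1)^k\binom{r+k-1}{k}=(-1)^k\binom{r+k-1}{r-1}$ one obtains
\[
[Z]^r\left(1+[Z]\right)^{-r}=\sum_{k\geq 0}(-1)^k\binom{r+k-1}{r-1}[Z]^{r+k}.
\]

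Next I would isolate the graded piece that carries the stringy Euler number. Writing $c_{\sbullet}^{str}(X)=\sum_{m=0}^d c_m^{str}(X)$ with $c_m^{str}(X)\in A_{d-m}(X)_{\Q}$, the class $[Z]^{r+k}.c_m^{str}(X)$ lies in $A_{d-m-r-k}(X)_{\Q}$, hence in $A_0(X)_{\Q}$ exactly when $m=d-r-k$. Since $Z_1\cap\ldots\cap Z_r$ has dimension $d-r$, its top stringy Chern class lies in $A_0$, and as the proper pushforward $i_*$ preserves degrees of $0$-cycles, comparing the $A_0(X)_{\Q}$-components of the displayed identity and taking degrees — with the intersection numbers $[Z]^{r+k}.c_{d-r-k}^{str}(X)$ understood as in Section \ref{section2} — yields
\[
c_{d-r}^{str}\left(Z_1\cap\ldots\cap Z_r\right)=\sum_{k=0}^{d-r}(-1)^k\binom{r+k-1}{r-1}[Z]^{r+k}.c_{d-r-k}^{str}(X),
\]
the range $0\leq k\leq d-r$ being forced by $c_{d-r-k}^{str}(X)$ only being defined for $d-r-k\geq 0$. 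Alternatively, the same identity follows by specializing $[Z_1]=\cdots=[Z_r]=[Z]$ directly in Corollary \ref{corcompl}.

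The argument is essentially bookkeeping once the Theorem is available, and I do not expect a genuine obstacle. The only points requiring care are the binomial identity $\binom{-r}{k}=(-1)^k\binom{r+k-1}{r-1}$ and keeping the two indices matched, so that the power $r+k$ of $[Z]$ and the degree $d-r-k$ of the stringy Chern class always add up to codimension $d$; this is also what fixes the cut-off of the sum at $k=d-r$.
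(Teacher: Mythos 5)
Your argument is correct and is exactly the derivation the paper intends (the corollary is stated without proof, as an immediate consequence of the preceding theorem): specializing to $[Z_1]=\cdots=[Z_r]=[Z]$, expanding $(1+[Z])^{-r}$ with $\binom{-r}{k}=(-1)^k\binom{r+k-1}{r-1}$, and extracting the $A_0(X)_{\Q}$-component gives precisely the stated formula. The binomial coefficient also agrees with the multiset count $\binom{r+k-1}{k}$ coming from expanding the product form in Corollary \ref{corcompl}, so the two routes you mention are consistent.
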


\section{Intersection numbers with stringy Chern classes} \label{section2}
Let $X$ be a $d$-dimensional normal projective $\Q$-Gorenstein variety with
at worst log-terminal singularities,
$\rho : Y \rightarrow X$ 
a log-desingularization of $X$, and 
$Z_1, \ldots, Z_k$ arbitrary $\Q$-Cartier divisors 
on $X$. Intersecting the classes $[Z_1], \ldots, [Z_k]\in {\rm Pic }(X)_{\Q}$ with 
the stringy Chern class $c_{d-k}^{str} (X) \in A_k(X)_\Q$,
one obtains 
the rational intersection number  $[Z_1].\; \ldots\; .[Z_k].c_{d-k}^{str} (X)$,
which can be considered as a generalization of Equation \eqref{eq12}.
Using \eqref{strck} for the definition of the stringy Chern class $c_{d-k}^{str} (X) \in A_k(X)_\Q$ 
and the projection formula for the proper morphism 
$\rho$, one receives
\begin{align*}
[Z_1]. \;\ldots \; .[Z_k].c_{d-k}^{str} (X)  
&= [Z_1].\;\ldots \;.[Z_k]. 
\rho_* \bigg(\sum_{\emptyset \subseteq J \subseteq I}  \! \!
{e_J}_* c_{d-\lvert J \rvert-k}(D_J)
\prod_{j \in J} \left( \frac{-a_j}{a_j+1}\right) \! \!
\bigg)\\
&=\rho_* \bigg( \rho^*   \left( [Z_1].\;\ldots \;.[Z_k] \right).  \! \!  \! \!
\sum_{\emptyset \subseteq J \subseteq I}  \! \! 
{e_J}_* c_{d-\lvert J \rvert-k}(D_J)
\prod_{j \in J} \left( \frac{-a_j}{a_j+1}\right)  \! \! \bigg)\\
&=\sum_{\emptyset \subseteq J \subseteq I}
\rho^*[Z_1]. \; \ldots\; . \rho^*[Z_k].{e_J}_* c_{d-\lvert J \rvert-k}(D_J)
\prod_{j \in J} \left( \frac{-a_j}{a_j+1}\right) ,
\end{align*} 
where the homomorphism  $\rho^* : {\rm Pic }(X)_{\Q} \to {\rm Pic}(Y)_{\Q}$ is determined 
by the pullback of line bundles  
and the intersection product 
\begin{align*}
\rho^*[Z_1].\; \ldots\; . \rho^*[Z_k]. {e_J}_* c_{d-\lvert J \rvert-k}(D_J)
\end{align*}
is the value of 
the multilinear map
${\rm Pic }(Y)_{\Q}^k \times A_k(Y) \to \Q$
defined by the intersection of $k$ classes $\rho^*[Z_1], 
\ldots, \rho^*[Z_k]$ of $\Q$-Cartier divisors on $Y$ with 
the $k$-dimensional cycle  ${e_J}_*c_{d-\lvert J \rvert-k}(D_J) \in 
A_{k}(Y)$ composed with the natural map $A_0(Y)_\Q \to \Q$. 
In particular, we obtain 

\begin{theo} \label{intersect}
The rational intersection number 
\[ \sum_{\emptyset \subseteq J \subseteq I}
\rho^*[Z_1].\; \ldots\; . \rho^*[Z_k]. {e_J}_*c_{d-\lvert J \rvert-k}(D_J)
\prod_{j \in J} \left( \frac{-a_j}{a_j+1}\right) \]
does not depend on the choice of the log-desingularization 
$\rho :  Y \to X$. 
\end{theo}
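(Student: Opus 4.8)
The plan is to reduce the independence statement to the already-known independence of the stringy $E$-function \eqref{Estr}, by packaging the intersection number as a limit of a modified stringy invariant. The key observation is that the factors $\rho^*[Z_i]$ behave compatibly under the elementary blow-up moves that connect any two log-desingularizations. Concretely, by the weak factorization theorem, any two log-desingularizations $\rho: Y \to X$ and $\rho': Y' \to X$ are connected by a chain of blow-ups and blow-downs along smooth centers that have normal crossings with the exceptional divisors; so it suffices to check invariance under a single such blow-up $\pi: \widetilde{Y} \to Y$ along a smooth center $W$ contained in (an intersection of some of) the $D_j$'s and meeting the rest transversally.

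First I would set up the bookkeeping for a single blow-up: if $\pi: \widetilde Y \to Y$ is the blow-up along a smooth irreducible center $W$ of codimension $c$, with exceptional divisor $E$ and discrepancy $a_E = c - 1 + \sum_{j : W \subseteq D_j} a_j$ (so that $K_{\widetilde Y} = \pi^* K_Y + a_E E + \sum \widetilde a_j \widetilde D_j$), then one must compare
\[
\sum_{\emptyset \subseteq J \subseteq I}
\rho^*[Z_1].\; \ldots\; . \rho^*[Z_k]. {e_J}_*c_{d-\lvert J \rvert-k}(D_J)
\prod_{j \in J} \left( \frac{-a_j}{a_j+1}\right)
\]
with the analogous sum over subsets of $\widetilde I = I \cup \{E\}$ on $\widetilde Y$, using the pullbacks $(\rho\circ\pi)^*[Z_i] = \pi^*\rho^*[Z_i]$. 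Since $\pi$ is proper and birational, the projection formula lets me push everything down to $Y$; the terms indexed by $J \not\ni E$ and $J \ni E$ reorganize, via $\pi_* ({e_J})_* c_\bullet$ computed on the components of the strict transforms and of $E$, exactly into the $Y$-side sum plus a correction. The crucial point is that this correction is precisely the quantity whose vanishing is guaranteed by the independence of the stringy $E$-function: indeed, our intersection number is, up to the substitution $uv \mapsto t$ and the limit $t \to 1$, a coefficient extraction from a "stringy $E$-function twisted by $\prod_i (1 - t^{\langle Z_i, \cdot\rangle})$"-type generating series, or more simply it equals $\lim_{u,v\to 1} D^k E_{str}$ of an auxiliary variety (e.g. a suitable fiber-product / $\mathbb{P}^1$-bundle construction forcing the divisor classes to appear), for which \cite[Theorem 3.4]{Bat98} already gives well-definedness.

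The honest alternative, which I would actually write out, avoids the auxiliary-variety trick and argues directly: the blow-up comparison above is a finite computation in $A_\bullet(Y)_\Q$ using only (i) the behavior of Chern classes of smooth divisors under blow-up along smooth centers in normal crossing position (the excess-intersection / blow-up formula, \cite[Ch.~6, Ch.~15]{Ful98}), (ii) the projection formula for $\pi$, and (iii) the identity $\frac{-a_E}{a_E+1}\cdot(\text{contributions from }E) = (\text{the difference of the }J\text{-sums})$, which is exactly the identity already verified scalar-by-scalar in the proof of \cite[Theorem 3.4]{Bat98} / \cite[Corollary 3.9]{Bat00} for the case $k=0$ and $k=1$ respectively, with the only new ingredient being that the divisor classes $\pi^*\rho^*[Z_i]$ pull back linearly and commute with all push-forwards by the projection formula — so they ride along passively through the entire computation. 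The main obstacle is purely organizational: keeping track of which subvarieties $D_J$ on $Y$ have which strict transforms and exceptional contributions on $\widetilde Y$, and matching the discrepancy combinatorics $\prod_{j\in J}\frac{-a_j}{a_j+1}$ across the two stratifications; once the $k=0$ case of \cite[Corollary 3.9]{Bat00} is granted, inserting $k$ extra divisor factors is formally immediate because intersection with $\pi^*(\cdot)$ is a ring homomorphism that commutes with $\pi_*$ via the projection formula.
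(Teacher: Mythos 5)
Your proposal does not follow the paper's route, and the route you say you would actually write out has a genuine gap at its core. The single-blow-up comparison you describe requires an identity at the level of cycle classes in $A_{\sbullet}(Y)_\Q$, namely that $\pi_*$ of the $\widetilde Y$-side sum $\sum_{\widetilde J} {e_{\widetilde J}}_* c_{\sbullet}(D_{\widetilde J}) \prod (-\widetilde a_j/(\widetilde a_j+1))$ equals the $Y$-side sum as cycles, not merely after taking degrees. Only with the cycle-level statement can the factors $\pi^*\rho^*[Z_i]$ ``ride along passively'' via the projection formula. But the sources you invoke for this identity do not contain it: \cite[Theorem 3.4]{Bat98} is a numerical (in fact Hodge-theoretic) statement proved by nonarchimedean integration, not a scalar-by-scalar blow-up computation, and \cite[Corollary 3.9]{Bat00} is again a numerical identity for $k=1$ derived from the Libgober--Wood formula. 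A numerical identity in degree cannot be capped with $k$ extra divisor classes. The cycle-level identity you need is essentially \cite[Proposition 3.2]{dFLNU07} (well-definedness of stringy Chern classes), which would make the theorem an immediate consequence of the projection formula --- but the paper explicitly states that the point of this proof is to avoid that input, and re-proving it via weak factorization is a substantial undertaking that your proposal only gestures at (``purely organizational'' undersells the excess-intersection bookkeeping considerably).

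The paper's actual argument is more elementary and quite different. It observes that $\imath_\rho(Z_1,\ldots,Z_k)$ is symmetric and multilinear on $\Pic(X)_\Q^k$, so by polarization it suffices to treat the diagonal case $\imath_\rho(Z,\ldots,Z)$ for $Z$ a generic very ample Cartier divisor. By Bertini, $Z'=\rho^{-1}(Z)$ is smooth and $\rho|_{Z'}$ is a log-desingularization of $Z$ with the same discrepancies; applying the classical formula $e(D_J')=\sum_k(-1)^{k-1}[D_J']^k.c_{d-|J|-k}(D_J)$ to each smooth divisor $D_J'\subseteq D_J$ and the projection formula, one finds that $e_{str}(Z^{(n)})$ for a generic member $Z^{(n)}\in |nZ|$ is a polynomial in $n$ whose degree-$k$ coefficient is $(-1)^{k-1}\imath_\rho(Z,\ldots,Z)$. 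Independence then follows from the known independence of the stringy Euler number \cite[Theorem 3.4]{Bat98}. Your opening paragraph's idea of extracting the intersection number as a coefficient of a stringy invariant is in the right spirit, but you neither identify the correct construction (generic members of $|nZ|$ rather than an auxiliary $\mathbb{P}^1$-bundle) nor supply the multilinearity reduction that makes it applicable to arbitrary $\Q$-Cartier divisors.
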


\begin{coro} \label{c_1}
Let $\rho^*c_1(X) = [-\rho^* K_X]$ be the pullback of the anticanonical
class of $X$. Then the intersection number 
\[ \sum_{\emptyset \subseteq J \subseteq I} 
\rho^{*}c_1(X)^k . {e_J}_* c_{d-\lvert J \rvert-k}\left(D_J \right) 
\prod_{j \in J} \left( \frac{-a_j}{a_j+1}\right) \]
is independent of the log-desingularization $\rho$.
\end{coro}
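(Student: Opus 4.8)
The plan is to derive Corollary \ref{c_1} directly from Theorem \ref{intersect} by a specialization argument: one takes $k$ of the $\Q$-Cartier divisors in Theorem \ref{intersect} to all be equal to a chosen $\Q$-Cartier divisor representing the anticanonical class $-K_X$. The only subtle point is that $c_1(X)$ is being used here as shorthand for the class $[-K_X]\in\Pic(X)_\Q$, which makes sense because $X$ is $\Q$-Gorenstein (so $K_X$ is $\Q$-Cartier); I would note this at the outset so that $\rho^*c_1(X)=[-\rho^*K_X]\in\Pic(Y)_\Q$ is well-defined.

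First I would fix a $\Q$-Cartier divisor $W$ on $X$ with $[W]=-[K_X]=c_1(X)$ in $\Pic(X)_\Q$. Then I would apply Theorem \ref{intersect} with the choice $Z_1=\cdots=Z_k=W$. That theorem asserts precisely that
\[
\sum_{\emptyset \subseteq J \subseteq I}
\rho^*[W].\;\ldots\;.\rho^*[W].\,{e_J}_* c_{d-\lvert J\rvert-k}(D_J)
\prod_{j\in J}\left(\frac{-a_j}{a_j+1}\right)
\]
is independent of the log-desingularization $\rho$. Since $\rho^*[W]=\rho^*c_1(X)$ by construction, the $k$-fold intersection product $\rho^*[W].\;\ldots\;.\rho^*[W]$ equals $\rho^*c_1(X)^k$, and the displayed sum is exactly the intersection number in the statement of the corollary. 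Hence its independence of $\rho$ follows immediately.

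One small point worth addressing is well-definedness of the number itself, independent of the choice of representative $W$ for the class $c_1(X)$: but the intersection product $\Pic(Y)_\Q^k\times A_k(Y)\to\Q$ described before Theorem \ref{intersect} depends only on the classes in $\Pic(Y)_\Q$, and $\rho^*$ sends the class $c_1(X)$ to a well-defined class in $\Pic(Y)_\Q$, so no ambiguity arises. I expect the main (indeed only) obstacle to be purely notational—checking that the object named $\rho^*c_1(X)^k$ in the corollary is literally the $k$-fold self-intersection of $\rho^*c_1(X)$ and therefore a legitimate instance of the multilinear pairing appearing in Theorem \ref{intersect}—after which the corollary is an immediate specialization with no further computation required.
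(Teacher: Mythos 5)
Your proposal is correct and matches the paper's (implicit) reasoning: the corollary is stated as an immediate specialization of Theorem \ref{intersect} with $Z_1=\cdots=Z_k$ all chosen to represent the anticanonical class $-[K_X]=c_1(X)$, which is $\Q$-Cartier since $X$ is $\Q$-Gorenstein. Your remarks on well-definedness of the representative and of $\rho^*c_1(X)^k$ as the $k$-fold self-intersection are accurate but routine; no further argument is needed.
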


In the case  $k=1$   (cf. Equation \eqref{eq12}), 
this theorem has been proved  in 
\cite[Corollary 3.9]{Bat00}.

We observe that the intersection number  
$[Z_1].\; \ldots\; .[Z_k].c_{d-k}^{str} (X)$  can be computed by a formula
that does not 
involve stringy Chern classes of the singular variety $X$, but 
only usual Chern classes of smooth subvarieties  $D_J \subseteq Y$. 
We give below a proof of Theorem \ref{intersect}, which does not 
use the general definiton of stringy Chern classes, but 
only the definition 
of the stringy Euler number \eqref{estr} and its independence of 
log-desingularization.  

\begin{proof}[Proof of Theorem \ref{intersect}]
Let us denote the number 
\[ \sum_{\emptyset \subseteq J \subseteq I}
\rho^*[Z_1]. \; \ldots \;. \rho^*[Z_k]. {e_J}_* c_{d-\lvert J \rvert-k}(D_J)
\prod_{j \in J} \left( \frac{-a_j}{a_j+1}\right) \]
by $\imath_\rho( Z_1, \ldots, Z_k )$. It is clear 
that the map
\[ \imath_\rho : {\rm Pic}(X)_{\Q}^k \to \Q ,
([Z_1], \ldots, [Z_k]) \mapsto \imath_\rho ( Z_1, \ldots, Z_k ) \]
is symmetric and  multilinear. Since the group ${\rm Pic}(X)$ is generated by 
classes of very ample Cartier divisors, it is enough to show the statement of 
Theorem \ref{intersect} in the case when $Z_1, \ldots, Z_k$ are very ample 
Cartier divisors. For any sequence of positive 
integers $n_1, \ldots, n_k$, the linear 
combination $n_1 [Z_1] + \cdots + n_k [Z_k]$ represents a class of 
a very ample Cartier divisor $Z$ on $X$. It follows from the symmetry 
and multilinearity 
of $\imath_\rho$ that $\imath_\rho(Z, Z, \ldots, Z)$ is a homogeneous polynomial
of degree $k$  in $n_1, \ldots, n_k$ whose coefficients are 
the rational numbers $\imath_\rho(Z_{i_1},Z_{i_2}, \ldots, Z_{i_k})$, where 
$1 \leq i_1, \ldots, i_k \leq k$. 
Therefore, it is enough to 
prove the statement of Theorem \ref{intersect} only for the rational numbers
 $\imath_\rho(Z, Z, \ldots, Z)$, where $Z$ is a generic very ample Cartier divisor on $X$.
By Theorem of Bertini, we can assume that $Z'\defeq \rho^{-1}(Z)$ is a smooth 
divisor on $Y$ and the restriction of $\rho$ to $Z'$ defines a log-desingularization
of $Z$ with the exceptional divisors $D_i'\defeq D_i \cap Z'$ such that 
$K_{Z'} = \rho^* K_Z + a_1D_1' + \cdots + a_s D_s'$.  
One can compute the stringy Euler number of $Z$ by 
\begin{align*}
 e_{str}(Z) = \sum_{\emptyset \subseteq J \subseteq I} e(D_J')
\prod_{j \in J} \left( \frac{-a_j}{a_j+1}\right), 
\end{align*}
where $e(D_J')$ denotes the usual Euler number of the smooth variety 
$D_J'= D_J \cap Z'$ (cf. Equation \eqref{estr} and \cite[Definition 3.3]{Bat98}). 
Now we apply Equation \eqref{ez} to each smooth divisor 
$D_J' \subseteq D_J$ and 
obtain
\begin{align*}
e(D_J') =\sum_{k =1}^{d- |J|} (-1)^{k-1} \left[D_J'\right]^k.  c_{d- |J|-k}(D_J) 
= \sum_{k =1}^{d- |J|} (-1)^{k-1} \rho^* \left[Z\right]^k. {e_J}_* c_{d- |J|-k}(D_J)
\end{align*}
because the projection formula for the proper morphism 
$e_J: D_J \hookrightarrow Y$ implies
\begin{align*}
\rho^* \left[Z\right]^k. {e_J}_* c_{d- |J|-k}(D_J)
&={e_J}_* ( {e_J}^* \rho^* \left[Z\right]^k.  c_{d- |J|-k}(D_J)) 
= {e_J}^* \rho^* \left[Z\right]^k. c_{d- |J|-k}(D_J) \\
&= {e_J}^* \left[ Z' \right]^k.  c_{d- |J|-k}(D_J) 
= \left[ D_J' \right]^k.  c_{d- |J|-k}(D_J) .
\end{align*}
Therefore, the stringy Euler number $e_{str}(Z)$ of $Z$ has the form
\begin{align*}
 e_{str}(Z) 
 &= \sum_{\emptyset \subseteq J \subseteq I} \bigg( \sum_{k =1}^{d- |J|} 
(-1)^{k-1} \rho^* \left[Z\right]^k. {e_J}_* c_{d- |J|-k}(D_J)
\bigg) \prod_{j \in J} \left( \frac{-a_j}{a_j+1}\right) \\
&= \sum_{k \geq 1} (-1)^{k-1}\rho^*[Z]^k \bigg( \sum_{\emptyset \subseteq J \subseteq I}
{e_J}_* c_{d-\lvert J \rvert-k}(D_J)
\prod_{j \in J} \left( \frac{-a_j}{a_j+1}\right) \! \! \bigg) .
\end{align*}
For any positive integer $n$, the class $n [Z]$  can be again represented by 
a generic very ample Cartier divisor $Z^{(n)}$ such that $\rho^{-1}(Z^{(n)})$ is smooth. So 
we can repeat the same arguments for $Z^{(n)}$ to obtain that the stringy Euler number
\begin{align*}
e_{str}\left(Z^{(n)}\right) 
= \sum_{k \geq 1} (-1)^{k-1}n^k\rho^*[Z]^k 
\bigg( \sum_{\emptyset \subseteq J \subseteq I}
 {e_J}_*c_{d-\lvert J \rvert-k}(D_J)
\prod_{j \in J} \left( \frac{-a_j}{a_j+1}\right) \! \! \bigg)  
\end{align*}
is a polynomial $P$ in $n$ whose $k$-th coefficient 
\begin{align*}
\rho^*[Z]^k 
\bigg( \sum_{\emptyset \subseteq J \subseteq I}
 {e_J}_*c_{d-\lvert J \rvert-k}(D_J)
\prod_{j \in J} \left( \frac{-a_j}{a_j+1}\right) \! \! \bigg)  
\end{align*} 
is equal to $\imath_\rho(Z, \ldots,Z)$. 
Since the stringy Euler number of $Z^{(n)}$ does not depend on the choice of 
the log-desingularization $\rho$ \cite[Theorem 3.4]{Bat98},
the same is true for the polynomial $P$ and hence for its $k$-th coefficient 
$\imath_\rho(Z, \ldots,Z)$. 
\end{proof}

\section{Stringy Chern classes on toric varieties} \label{section3}
It is well-known that singularities 
of an arbitrary  $\Q$-Gorenstein toric variety $X$  are log-terminal.
Moreover, the stringy Euler number $e_{str}(X)$ of the 
 toric variety $X$
can be computed combinatorially using cones of maximal 
dimension in the associated fan $\Sigma$ \cite[Proposition 4.10]{Bat98} 
In this section, we give a combinatorial formula of  all 
stringy Chern classes 
of arbitrary $\Q$-Gorenstein toric varieties 
using the  intrinsic information provided by the associated fans. 
We apply this formula to compute intersection numbers 
$\left[D_1\right].  \; \ldots \; . \left[D_k\right].c_{d-k}^{str}(X)$ 
via mixed volumes of certain polytopes.

We start with a well-know fact about Chern classes of 
smooth toric varieties:

\begin{theo} \label{torvol-smooth}
Let $V$ be a smooth  
toric variety
associated with a fan $\Sigma$ in $N_{\R}$. 
Then the total Chern class of $V$ is 
\begin{align*}
c_{{\sbullet}} (V) = \sum_{\sigma \in \Sigma}  
\left[V_{\sigma}\right],
\end{align*}
where $\left[V_{\sigma}\right]$ is the class 
of the closed torus orbit $V_\sigma$ corresponding to a cone 
$\sigma \in \Sigma$.
\end{theo}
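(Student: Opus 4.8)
The plan is to derive the total Chern class of a smooth toric variety $V$ from the description of its tangent bundle via the generalized Euler sequence and then interpret the resulting product in terms of the orbit classes $[V_\sigma]$. First I would recall the exact sequence of sheaves on $V$ associated to the fan $\Sigma$: if $D_1,\dots,D_n$ are the torus-invariant prime divisors corresponding to the rays $\rho_1,\dots,\rho_n\in\Sigma(1)$, then there is a short exact sequence
\begin{align*}
0 \longrightarrow \Omega^1_V \longrightarrow \bigoplus_{i=1}^{n}\calo_V(-D_i) \longrightarrow \Pic(V)\otimes_\Z\calo_V \longrightarrow 0
\end{align*}
(see Cox--Little--Schenck or Fulton's toric book). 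Dualizing, since $V$ is smooth and complete, the piece $\Pic(V)\otimes\calo_V$ contributes trivially to the total Chern class, so $c_{\sbullet}(\calt_V) = \prod_{i=1}^{n}(1+[D_i])$ in the Chow ring $A^{\sbullet}(V)$. This is the standard starting point; no obstacle here.

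The second step is the combinatorial identification $\prod_{i=1}^{n}(1+[D_i]) = \sum_{\sigma\in\Sigma}[V_\sigma]$. The key input is the description of the Chow ring of a smooth complete toric variety as the Stanley--Reisner ring of $\Sigma$ modulo the linear relations coming from $M$: a product $[D_{i_1}]\cdots[D_{i_k}]$ with distinct indices equals $[V_\sigma]$ if $\rho_{i_1},\dots,\rho_{i_k}$ span a cone $\sigma\in\Sigma(k)$, and equals $0$ otherwise. Expanding the product $\prod_{i=1}^{n}(1+[D_i])$ gives $\sum_{S\subseteq\{1,\dots,n\}}\prod_{i\in S}[D_i]$, and by the Stanley--Reisner relations only those subsets $S$ whose rays generate a cone of $\Sigma$ survive, each contributing exactly $[V_\sigma]$ for the corresponding $\sigma$; the empty set gives $[V]=[V_\emptyset]$, the fundamental class, matching the $k=0$ term $c_0(V)=1$. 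Hence $c_{\sbullet}(V)=\sum_{\sigma\in\Sigma}[V_\sigma]$, which is the claim.

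The main obstacle, such as it is, lies in justifying that the terms $\prod_{i\in S}[D_i]$ with $S$ \emph{not} a cone of $\Sigma$ genuinely vanish — i.e., invoking the precise form of the Stanley--Reisner presentation and checking that the primitive collections / non-faces are exactly what produces the zero relations — together with handling the case where $V$ is only a smooth toric variety (not necessarily complete), in which the argument should be localized: it suffices to verify the identity on the affine charts $U_\sigma$ and on each such chart one can reduce to the sub-fan of faces of a single smooth cone, where the Chow ring computation is elementary. An alternative, cleaner route that avoids the completeness issue altogether is to argue directly on the level of the tangent bundle restricted to each orbit closure, or simply to cite the standard reference (e.g. \cite[Section 5.1]{Ful98} or \cite[Theorem 13.1.2]{CLS}) for the statement $c_{\sbullet}(V)=\prod_i(1+[D_i])$ and the ring structure. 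I would expect the actual write-up to be short: set up the Euler sequence, take total Chern classes, expand the product, and apply the Stanley--Reisner relations to collapse it to the orbit sum.
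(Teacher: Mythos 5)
The paper offers no proof of this statement at all: it is introduced with the words ``We start with a well-know fact about Chern classes of smooth toric varieties'' and then used as the base case for Theorem \ref{torvol}, so there is no internal argument to compare yours against. On its own merits your proof is the standard one and is essentially correct: the generalized Euler sequence gives $c_{\sbullet}(V)=\prod_{i}(1+[D_i])$, and expanding the product and collapsing it term by term yields $\sum_{\sigma\in\Sigma}[V_\sigma]$, with the empty subset accounting for the zero cone. Two points deserve a bit more care than you give them. First, the Euler sequence in the form you quote requires that $V$ have \emph{no torus factors} (equivalently, that the rays of $\Sigma$ span $N_{\R}$), which is a different hypothesis from completeness; a torus factor contributes a trivial summand to $\calt_V$ and hence nothing to the total Chern class, so the general case reduces to this one, but that reduction should be stated rather than folded into the word ``complete.'' Second, the vanishing of $\prod_{i\in S}[D_i]$ when the rays indexed by $S$ do not span a cone is most cleanly justified geometrically --- on a smooth toric variety distinct invariant prime divisors meet transversally, their scheme-theoretic intersection is $V_\sigma$ exactly when $S$ spans a cone $\sigma\in\Sigma$ and is empty otherwise, so the refined intersection product is $[V_\sigma]$ or $0$ accordingly --- rather than by invoking the Stanley--Reisner presentation of the cohomology ring, which is usually stated only for complete (or at least simplicial projective) fans and would reintroduce the completeness issue you are trying to avoid. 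With those two adjustments, or simply with a citation of \cite[Section 5.1]{Ful93} or the corresponding statement in \cite{CLS11}, the argument is complete and proves exactly what the paper asserts.
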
 

\begin{theo} \label{torvol}
Let $X$ be a $\Q$-Gorenstein 
toric variety
associated with a fan $\Sigma$. 
Then the total stringy Chern class of $X$ is 
\begin{align*}
c_{{\sbullet}}^{str} (X) = \sum_{\sigma \in \Sigma} v(\sigma) 
\cdot \left[X_{\sigma}\right],
\end{align*}
where $v(\sigma) = k! \cdot \vol_{k} \left(\Theta_{\sigma}\right)$
and   $\vol_{k} \left(\Theta_{\sigma}\right)$ is the 
$k$-dimensional volume 
of the lattice polytope $\Theta_{\sigma}$ obtained as the convex hull 
of the origin
and the primitive lattice generators of all $1$-dimensional 
faces of a cone $\sigma \in \Sigma(1)$. 
\end{theo}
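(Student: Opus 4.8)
The plan is to obtain the combinatorial formula for $c_{\sbullet}^{str}(X)$ by combining Proposition~\ref{new-string} with an explicit choice of toric log-desingularization and with Theorem~\ref{torvol-smooth}. First I would take a simplicial refinement $\Sigma'$ of $\Sigma$ whose rays are exactly the rays of $\Sigma$ — more precisely, a regular (unimodular) refinement $\widetilde{\Sigma}$ of $\Sigma$, adding new rays only in the interiors of the cones $\sigma$ with $\dim\sigma\geq 2$, so that the associated toric morphism $\rho\colon Y\to X$ is a log-desingularization. Because $X$ is $\Q$-Gorenstein, the discrepancy $a_i$ of the exceptional divisor $D_i$ corresponding to a new ray $\R_{\geq 0}u_i$ is determined by the piecewise-linear support function: if $u_i$ lies in the relative interior of a cone $\sigma\in\Sigma$ and $\ell_\sigma$ is the linear functional agreeing with the $\Q$-Cartier canonical support function on $\sigma$, then $a_i = -\ell_\sigma(u_i) - 1 > -1$, i.e.\ $\ell_\sigma(u_i) = -(a_i+1)$. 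This is the standard toric discrepancy computation and I would quote it.

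Next I would apply formula~\eqref{strtc}: $c_{\sbullet}^{str}(X) = \rho_*\sum_{J} {e_J}_* c_{\sbullet}(D_J)\prod_{j\in J}\frac{-a_j}{a_j+1}$. For a regular fan $\widetilde{\Sigma}$, each $D_J$ (intersection of the toric divisors $D_j$, $j\in J$) is the closed toric orbit $V_{\tau}$ where $\tau$ is the cone spanned by the rays in $J$ — nonempty precisely when those rays span a cone of $\widetilde\Sigma$ — and by Theorem~\ref{torvol-smooth} applied to the smooth toric variety $D_J=V_\tau$ (with its own fan, the star of $\tau$) we get $c_{\sbullet}(D_J)=\sum_{\widetilde\sigma \supseteq \tau}[V_{\widetilde\sigma}]$ inside $D_J$, hence ${e_J}_* c_{\sbullet}(D_J)=\sum_{\widetilde\sigma\supseteq\tau}[V_{\widetilde\sigma}]$ in $A_{\sbullet}(Y)$. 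Reorganizing the double sum over $(J,\widetilde\sigma)$ by grouping according to $\widetilde\sigma$, the coefficient of $[V_{\widetilde\sigma}]$ in $\sum_J {e_J}_* c_{\sbullet}(D_J)\prod_{j\in J}\frac{-a_j}{a_j+1}$ becomes $\sum_{J\subseteq \text{rays}(\widetilde\sigma)}\prod_{j\in J}\frac{-a_j}{a_j+1} = \prod_{\text{rays }u_j\text{ of }\widetilde\sigma}\bigl(1+\frac{-a_j}{a_j+1}\bigr) = \prod_j \frac{1}{a_j+1}$, where the product runs over all rays of $\widetilde\sigma$ and $a_j=0$ for the original rays of $\Sigma$. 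So $c_{\sbullet}^{str}(X)=\rho_*\sum_{\widetilde\sigma\in\widetilde\Sigma}\bigl(\prod_{j}\tfrac{1}{a_j+1}\bigr)[V_{\widetilde\sigma}]$.

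Then I would push forward along $\rho$. For a cone $\widetilde\sigma\in\widetilde\Sigma$ lying in the relative interior of $\sigma\in\Sigma$ with $\dim\widetilde\sigma=\dim\sigma=k$, one has $\rho_*[V_{\widetilde\sigma}] = (\text{mult})\cdot[X_\sigma]$ with the multiplicity equal to $[\langle\sigma\rangle_\R\cap N : \Z\text{-span of the ray generators of }\widetilde\sigma]$; for $\widetilde\sigma$ not of full dimension in its $\sigma$, $\rho_*[V_{\widetilde\sigma}]=0$. Grouping the remaining terms by $\sigma\in\Sigma(k)$, the coefficient $v(\sigma)$ of $[X_\sigma]$ is $\sum_{\widetilde\sigma} (\text{mult of }\widetilde\sigma)\cdot\prod_j\frac1{a_j+1}$, the sum over all $k$-dimensional $\widetilde\sigma\in\widetilde\Sigma$ in the interior of $\sigma$. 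The point is to recognize this as a normalized-volume computation: using $\ell_\sigma(u_j)=-(a_j+1)$, the simplex $\conv(0,u_{j_1},\dots,u_{j_k})$ for a full-dimensional $\widetilde\sigma\subseteq\sigma$ has normalized volume (w.r.t.\ $\langle\sigma\rangle_\R\cap N$) equal to $(\text{mult})\cdot\prod_j(a_j+1)$, while $\Theta_\sigma=\conv(0,\text{primitive generators of }\sigma(1))$ is subdivided by the cones $\widetilde\sigma$ into the truncated pieces $\conv(0,u_{j_1},\dots,u_{j_k})\cap\{\ell_\sigma\geq -1\}$, each of which is a simplex scaled by the factor $1/\prod_j(a_j+1)$ off the face opposite $0$. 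Summing volumes of the subdivision gives exactly $k!\vol_k(\Theta_\sigma)=\sum_{\widetilde\sigma}(\text{mult})\prod_j\frac{1}{a_j+1}=v(\sigma)$, and positivity/integrality of $v(\sigma)$ follows since $\Theta_\sigma$ is a lattice polytope. I expect the volume-subdivision bookkeeping in this last step — carefully matching the multiplicity factors, the discrepancy weights $1/(a_j+1)$, and the truncation of each cone at $\ell_\sigma=-1$ — to be the main obstacle; everything before it is a formal manipulation of \eqref{strtc} and Theorem~\ref{torvol-smooth}.
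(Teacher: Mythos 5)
Your proposal follows essentially the same route as the paper: refine $\Sigma$ to a smooth fan, write $c_{\sbullet}^{str}(X)=\rho_*\sum_{\widetilde\sigma}\bigl(\prod_{j}\frac{1}{a_j+1}\bigr)[V_{\widetilde\sigma}]$ (the paper obtains these weights directly from the open-strata formula \eqref{fernex}, while you recover them from \eqref{strtc} via the inclusion--exclusion $\sum_{J}\prod_{j\in J}\frac{-a_j}{a_j+1}=\prod_{j}\frac{1}{a_j+1}$, which is a valid alternative), push forward, and identify the coefficient of $[X_\sigma]$ with $k!\cdot\vol_k(\Theta_\sigma)$ via the induced subdivision of $\Theta_\sigma$. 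Two bookkeeping slips occur in your last step, neither of which affects the outcome once the refinement is unimodular: first, the degree of $V_{\widetilde\sigma}\to X_\sigma$ is always $1$ when $\dim\widetilde\sigma=\dim\sigma$ (both dense orbits are tori for the same quotient lattice $N/(N\cap\langle\sigma\rangle_{\R})$), not the multiplicity of $\widetilde\sigma$; second, the normalized volume of $\conv(0,u_{j_1},\dots,u_{j_k})$ equals the multiplicity of $\widetilde\sigma$ (hence $1$), not $(\mathrm{mult})\cdot\prod_j(a_j+1)$. The correct statement is that the piece $\widetilde\sigma\cap\{\ell_\sigma\geq-1\}=\conv\bigl(0,\tfrac{u_{j_1}}{a_{j_1}+1},\dots,\tfrac{u_{j_k}}{a_{j_k}+1}\bigr)$ has normalized volume $\prod_j\frac{1}{a_j+1}$, and summing these over the unimodular subdivision of $\sigma$ gives $v(\sigma)$; note also that the truncated piece is the truncation of the cone $\widetilde\sigma$, not of the simplex $\conv(0,u_{j_1},\dots,u_{j_k})$, since some $a_j$ may be negative and then $\tfrac{u_j}{a_j+1}$ lies outside that simplex.
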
 

\begin{proof}
Consider $\rho : Y \rightarrow X$ to be a 
log-desingularization of $X$ obtained by a refinement
$\Sigma'$ of the fan $\Sigma$. 
There is a natural 
bijection between the set of exceptional 
divisors $\{D_1, \ldots, D_s\}$ in $Y$
and the set of $1$-dimensional cones 
$\Sigma'(1) \setminus \Sigma(1)$. We denote by 
$\{D_{s+1},\ldots,D_r\}$ the set of all remaining torus-invariant 
divisors in $Y$ whose elements one-to-one correspond to $1$-dimensional 
cones in  $\Sigma(1)$ and set $I \defeq \{1,\ldots,s \}$,  
$I' \defeq I \cup \{s+1,\ldots,r \}$, and 
$a_j \defeq 0$ for all $j \in I' \setminus I$.

Let $\{u_1, \ldots, u_r\} = \{ u_i\, \vert \, i \in I'\}$ 
be the set of all primitive lattice generators of $1$-dimensional
cones of $\Sigma'(1)$ corresponding to all 
torus-invariant divisors $D_1, \ldots, D_r$.
For any subset $J' \subseteq I'$, 
the subset  $D_{J'}=  \cap_{j \in J'} D_j$ is either empty or 
a smooth toric subvariety $Y_{\sigma'}$ 
of $Y$. The latter holds if and only if the set 
$\{ u_j  \vert  j \in J'  \}$ generates a cone
$\sigma' \in \Sigma'$ of dimension $|J'|$. 
Then the locally closed subset
$D^{\circ}_{J'} \defeq D_{J'} \setminus \left( \cup_{i \in I' \setminus J'} 
D_i\right)$ is the dense open torus orbit  $T_{\sigma'}$ in  $Y_{\sigma'}$.
By Theorem \ref{torvol-smooth}, we have 
$c_{\sbullet}(T_{\sigma'})= [T_{\sigma'}]$. This implies 
$c \left(\mathds{1}_{D_{J'}^{\circ}} \right) =  \left[Y_{\sigma'}\right]$.
Using Equation \eqref{fernex},  
we get
\begin{align*} 
c_{\sbullet}^{str}(X)
&= \rho_*   \bigg( \sum_{\emptyset \subseteq J' \subseteq I'} 
c \left(\mathds{1}_{D_{J'}^{\circ}} \right) \prod_{j \in J'} 
\left(\frac{1}{a_j+1}\right) \! \! \bigg)
= \sum_{\sigma' \in \Sigma'} \rho_* \left[Y_{\sigma'} \right] \prod_{u_j \in \sigma'} 
\left( \frac{1}{a_j+1}\right). 
\end{align*}
 
Let us compute $\rho_*   [Y_{\sigma'}]$.   
If $\sigma \in \Sigma$ is the minimal cone of $\Sigma$ 
containing $\sigma' \in \Sigma'$, then $\rho(Y_{\sigma'}) = X_{\sigma}$. 
In order to compute the corresponding cycle map 
$\rho_*: A_{\sbullet}(Y) \to A_{\sbullet}(X)$, we need to compare the dimensions of 
$Y_{\sigma'}$ and $X_{\sigma}$. If $\dim \left( Y_{\sigma'} \right) > \dim \left( X_{\sigma} \right)$, then 
$\rho_*   [Y_{\sigma'}]  =0$. Otherwise, we have
$\rho_*   [Y_{\sigma'}]  = [X_\sigma]$.

Therefore,  we get
\begin{align*} 
c_{\sbullet}^{str}(X) 
&=  \sum_{\sigma \in \Sigma} \bigg(  \sum_{\sigma' \in \Sigma', \sigma' \preceq \sigma \atop  \dim \left( \sigma' \right) = \dim \left( \sigma \right)}
  \prod_{u_j \in \sigma'} 
\left( \frac{1}{a_j+1}\right) \! \! \bigg)  \cdot \left[X_{\sigma} \right].
\end{align*}
Furthermore, $ \prod_{u_j \in \sigma'} \left( \frac{1}{a_j+1}  \right) 
= v\left(\sigma'\right)  $ for every cone $\sigma' \in \Sigma'$ and this implies
\begin{align*} 
c_{\sbullet}^{str}(X) 
= \sum_{\sigma \in \Sigma} \bigg( \sum_{\sigma' \in \Sigma', \sigma' \preceq \sigma \atop  \dim \left( \sigma' \right) = \dim \left( \sigma \right)}
v\left(\sigma'\right) \! \!  \bigg)  \cdot \left[X_{\sigma} \right] 
= \sum_{\sigma \in \Sigma}  v\left(\sigma\right)   \cdot \left[X_{\sigma} \right] .
\end{align*}
\end{proof}

\begin{coro} \label{cortotalchern}
Let $X$ be a $d$-dimensional 
 $\Q$-Gorenstein toric variety  
associated with a fan $\Sigma$. 
Then the $k$-th stringy Chern class of $X$ $(0 \leq k \leq d)$ is 
\begin{align*}
c_{k}^{str} (X) = \sum_{\sigma \in \Sigma(k)} v(\sigma) 
\cdot \left[X_{\sigma}\right].
\end{align*}
\end{coro}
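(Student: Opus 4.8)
The plan is to deduce Corollary \ref{cortotalchern} from Theorem \ref{torvol} by isolating a single graded component. Recall that the rational Chow ring splits as $A_{\sbullet}(X)_{\Q} = \bigoplus_{k=0}^{d} A_{d-k}(X)_{\Q}$ and that, by the definition \eqref{strck}, the $k$-th stringy Chern class $c_k^{str}(X)$ is precisely the piece of the total stringy Chern class $c_{\sbullet}^{str}(X)$ that lies in $A_{d-k}(X)_{\Q}$.

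First I would invoke the standard fact from toric geometry that for a cone $\sigma \in \Sigma$ the closed torus orbit $X_{\sigma}$ is an irreducible subvariety of codimension $\dim(\sigma)$, so that $[X_{\sigma}] \in A_{d-\dim(\sigma)}(X)$. Consequently, in the expansion $c_{\sbullet}^{str}(X) = \sum_{\sigma \in \Sigma} v(\sigma) \cdot [X_{\sigma}]$ from Theorem \ref{torvol}, a summand indexed by $\sigma$ contributes to $A_{d-k}(X)_{\Q}$ exactly when $\dim(\sigma) = k$, i.e. when $\sigma \in \Sigma(k)$. Collecting these contributions shows that the component of $c_{\sbullet}^{str}(X)$ in $A_{d-k}(X)_{\Q}$ equals $\sum_{\sigma \in \Sigma(k)} v(\sigma) \cdot [X_{\sigma}]$, which is by definition $c_k^{str}(X)$. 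Here $v(\sigma) = k!\cdot\vol_k(\Theta_{\sigma})$ is the positive integer attached to the $k$-dimensional cone $\sigma$, and its well-definedness (independence of the log-desingularization) is already contained in Theorem \ref{torvol} via Proposition \ref{new-string}.

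There is essentially no obstacle here beyond bookkeeping; the only point requiring care is the grading normalization, namely matching ``$k$-th Chern class'' (living in codimension $k$) with ``cones of dimension $k$'' (whose orbit closures have codimension $k$). If one prefers an argument that does not pass through the total class, one can instead restrict the defining formula \eqref{strck} to the refinement $\Sigma'$ used in the proof of Theorem \ref{torvol} and retain only the terms ${e_{J'}}_* c_{k-|J'|}(D_{J'})$ of the correct codimension; this reproduces the same computation $\sum_{\sigma' \preceq \sigma,\ \dim(\sigma') = \dim(\sigma)} v(\sigma')= v(\sigma)$ restricted to $\sigma \in \Sigma(k)$, so it yields nothing new.
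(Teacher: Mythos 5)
Your proposal is correct and is exactly the argument the paper intends: the corollary is stated without proof as an immediate consequence of Theorem \ref{torvol}, obtained by extracting the graded component in $A_{d-k}(X)_{\Q}$, where only cones $\sigma\in\Sigma(k)$ contribute since $[X_{\sigma}]$ has codimension $\dim(\sigma)$. Your bookkeeping of the grading is the only content needed, and it is handled correctly.
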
 

The above formula allows to compute combinatorially the intersection number of 
the stringy Chern class $c_{d-1}^{str}(X)$ of a  $d$-dimensional 
projective $\Q$-Gorenstein toric variety $X$ corresponding to a
fan $\Sigma$ with 
an arbitrary  torus-invariant $\Q$-Cartier divisor
$D= \sum_{\rho \in \Sigma(1)}a_{\rho} D_{\rho}$ on $X$. 

Therefore, we define  for any $(d-1)$-dimensional 
cone $\sigma \in \Sigma(d-1)$  
the rational number
$l_D(\sigma)$:
Consider two  $d$-dimensional 
cones $\sigma'$, $\sigma'' \in \Sigma(d)$  such that 
$\sigma = \sigma' \cap \sigma''$. Denote by 
$m_{\sigma'}$ and $m_{\sigma''}$  elements in $M_\Q$ that are defined by 
the conditions $\left< m_{\sigma'},u_{\rho} \right> = -a_{\rho} \; \forall 
\rho \subseteq  \sigma'$ respectively
 $\left< m_{\sigma''},u_{\rho} \right> = -a_{\rho} \; \forall 
\rho \subseteq  \sigma''$, 
where $\rho \in \Sigma(1)$ and $u_{\rho} \in N$ denotes its 
primitive lattice generator.
Now choose the primitive lattice generator $u$ of the 
$1$-dimensional sublattice $M(\sigma) \defeq \{ m \in M \vert 
\left< m, u' \right> = 0 \; \forall u' \in \sigma \}$ 
such that $u\vert_{\sigma'} \leq 0$ and $u\vert_{\sigma''} \geq 0$.
Since  $m_{\sigma'}-m_{\sigma''}$ vanishes on $\sigma$, there exists 
a unique number
$l_D(\sigma) \in \Q$
such that $m_{\sigma'}-m_{\sigma''} = l_D(\sigma) \cdot u$.

\begin{prop}  \label{C1Cn-1}
Let $X$ be a $d$-dimensional 
projective $\Q$-Gorenstein toric variety 
associated with a fan $\Sigma$ and 
$D$ a torus-invariant $\Q$-Cartier divisor on $X$. Then
\begin{align*}
\left[D\right].c_{d-1}^{str}(X) 
= \sum_{\sigma \in \Sigma(d-1)}  v\left(\sigma \right) \cdot l_D(\sigma) ,
\end{align*}
where the rational number $l_D(\sigma) \in \Q$ 
is defined as above.
\end{prop}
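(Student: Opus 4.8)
The plan is to combine the combinatorial formula $c_{d-1}^{str}(X) = \sum_{\sigma \in \Sigma(d-1)} v(\sigma)\cdot [X_\sigma]$ from Corollary \ref{cortotalchern} with the standard toric intersection theory that computes $[D]\cdot[X_\sigma]$ for a torus-invariant $\Q$-Cartier divisor $D$ and a torus-invariant curve $X_\sigma$. Since intersection products are linear, it suffices to show that $[D]\cdot[X_\sigma] = l_D(\sigma)$ for each $(d-1)$-dimensional cone $\sigma$, after which summing against the coefficients $v(\sigma)$ yields the claimed formula. So the whole proof reduces to identifying the local intersection number of $D$ with the torus-invariant curve $C_\sigma := X_\sigma$ corresponding to the wall $\sigma = \sigma' \cap \sigma''$.

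First I would recall (or rederive) the standard fact: if $D$ is Cartier, then on the affine chart $U_{\sigma'}$ it is cut out by the character $\chi^{m_{\sigma'}}$ with $\langle m_{\sigma'}, u_\rho\rangle = -a_\rho$ for $\rho \subseteq \sigma'$, and similarly $\chi^{m_{\sigma''}}$ on $U_{\sigma''}$; the intersection number $D\cdot C_\sigma$ is computed by the difference $m_{\sigma'} - m_{\sigma''}$, which vanishes on $\sigma$ and hence is a multiple of the generator $u$ of $M(\sigma)$. With the sign convention that $u|_{\sigma'}\le 0$ and $u|_{\sigma''}\ge 0$, one gets exactly $D\cdot C_\sigma = l_D(\sigma)$, where $m_{\sigma'}-m_{\sigma''} = l_D(\sigma)\cdot u$. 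For $\Q$-Cartier $D$ the same holds after clearing denominators and using $\Q$-linearity of the intersection pairing; the $m_{\sigma'}, m_{\sigma''}$ then lie in $M_\Q$ and $l_D(\sigma)\in\Q$, precisely as in the statement. I would cite this from Fulton's toric book (the ``intersection numbers on toric varieties'' section) rather than reprove it.

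Then I would assemble the pieces: by Corollary \ref{cortotalchern}, $c_{d-1}^{str}(X) = \sum_{\sigma\in\Sigma(d-1)} v(\sigma)\,[X_\sigma]$, where each $[X_\sigma]$ is the class of the torus-invariant curve $C_\sigma$. Intersecting with $[D]$ and using bilinearity gives
\[
[D].c_{d-1}^{str}(X) = \sum_{\sigma\in\Sigma(d-1)} v(\sigma)\,\big([D]\cdot[X_\sigma]\big) = \sum_{\sigma\in\Sigma(d-1)} v(\sigma)\cdot l_D(\sigma),
\]
which is the assertion. One should note that here $X$ is assumed projective and complete, so $\Sigma$ is complete and every wall $\sigma\in\Sigma(d-1)$ is indeed the intersection of exactly two maximal cones $\sigma',\sigma''\in\Sigma(d)$; this is what makes the definition of $l_D(\sigma)$ well-posed.

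The main obstacle, I expect, is purely bookkeeping: matching the sign conventions in the definition of $l_D(\sigma)$ (the choice of $u$ with $u|_{\sigma'}\le 0$, $u|_{\sigma''}\ge 0$, and the order of the difference $m_{\sigma'}-m_{\sigma''}$) with the conventions in the chosen reference for toric intersection numbers, so that the equality $[D]\cdot[X_\sigma]=l_D(\sigma)$ holds on the nose rather than up to sign. Once that is pinned down, the argument is immediate. A secondary point worth a sentence is that $v(\sigma)$ here coincides with the normalized lattice length of the segment $\Theta_\sigma$ for a $(d-1)$-cone — but since we only use the formula of Corollary \ref{cortotalchern} as a black box, no further justification of $v(\sigma)$ is needed.
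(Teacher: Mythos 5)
Your proposal is correct and follows essentially the same route as the paper: apply Corollary \ref{cortotalchern} to expand $c_{d-1}^{str}(X)$ as $\sum_{\sigma}v(\sigma)[X_\sigma]$, then use the standard toric wall-crossing formula $[D].[X_\sigma]=l_D(\sigma)$ (the paper cites Proposition 6.3.8 of Cox--Little--Schenck where you cite Fulton). The extra discussion of sign conventions and the $\Q$-Cartier reduction is fine but not needed beyond what the cited reference already provides.
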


\begin{proof}
Using Corollary \ref{cortotalchern}, we obtain 
\begin{align*}
\left[D\right].c_{d-1}^{str}(X) 
= \sum_{\sigma \in \Sigma(d-1)} v(\sigma)\cdot 
\left[D\right].\left[X_{\sigma}\right]. 
\end{align*}
It remains to apply the equality 
$\left[D\right].\left[X_{\sigma}\right] = l_D(\sigma)$  
(cf. \cite[Proposition 6.3.8]{CLS11}) for every 
cone $\sigma \in \Sigma(d-1)$.
\end{proof}

Now we compute intersection numbers $[D]^k.c_{d-k}^{str}(X)$, where 
$D = \sum_{\rho \in \Sigma(1)}a_{\rho} D_{\rho}$ is a semiample 
torus-invariant $\Q$-Cartier divisor on the toric variety $X$. 
Consider the corresponding convex rational polytope $\Delta_D$ of dimension
$\leq d$
defined as 
\begin{align*}
\Delta_D \defeq 
\{ y \in M_{ \R} \vert \left<y,u_{\rho} \right> \geq -a_{\rho} \; 
\forall \rho \in \Sigma(1) \} \subseteq M_{\R},
\end{align*}
where $u_{\rho} \in N$ is 
the primitiv lattice generator of a $1$-dimensional cone 
$\rho \in \Sigma(1)$.
Let $\sigma \in \Sigma(d-k)$ be a $(d-k)$-dimensional 
cone of the fan $\Sigma$.  
Denote by  $\Delta_D^{\sigma}$ a
face of $\Delta_D$ of dimension $\leq k$ defined as
\begin{align*}
\Delta_D^{\sigma} \defeq \{ y \in \Delta_D \vert \left<y,u_{\rho} \right> = -a_{\rho} 
\; \forall \rho \in \Sigma(1) \text{ with } \rho \in \sigma \}\subseteq M_{\R}.
\end{align*} 
The  {\em volume of the rational polytope} $\Delta_D^{\sigma}$ is defined as 
\begin{align*}
v\left(\Delta_D^{\sigma} \right) \defeq k! \cdot \vol_k\left(\Delta_D^{\sigma} 
\right) \in \Q,
\end{align*} 
where $\vol_k\left(\Delta_D^{\sigma} \right)$ is the volume of $\Delta_D^{\sigma}$ 
with respect to the $k$-dimensional sublattice 
$M(\sigma) = \{m \in M \vert \left<m,u'  \right> = 0 \; \forall u' \in  
\sigma \}$ of $M$. 
In particular, one has $v\left(\Delta_D^{\sigma} \right) = 0$  
if $\dim \left(\Delta_D^{\sigma} \right) < k$. 

\begin{theo} \label{ZkCn-k}
Let $X$ be a $d$-dimensional 
projective $\Q$-Gorenstein toric variety   
associated with a fan $\Sigma$ and 
$D$ a semiample 
torus-invariant $\Q$-Cartier divisor on $X$. Then
\begin{align*}
\left[D\right]^k.c_{d-k}^{str}(X) = \sum_{\sigma \in \Sigma(d-k)}v(\sigma) 
\cdot v\left(\Delta_D^{\sigma} \right),
\end{align*}
where the face $\Delta_D^{\sigma}$ of $\Delta_D$ and $v\left(\Delta_D^{\sigma} \right)$ 
are defined as above $(0 \leq k \leq d)$.
\end{theo}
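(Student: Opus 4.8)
The plan is to reduce the intersection number $[D]^k.c_{d-k}^{str}(X)$ to a sum of local contributions over the $(d-k)$-dimensional cones of $\Sigma$, and to identify each local contribution with $v(\sigma)\cdot v(\Delta_D^{\sigma})$. First I would invoke Corollary \ref{cortotalchern} to write $c_{d-k}^{str}(X) = \sum_{\sigma \in \Sigma(d-k)} v(\sigma)\,[X_\sigma]$, so that, by multilinearity of the intersection product, the claim is equivalent to the purely toric identity $[D]^k.[X_\sigma] = v(\Delta_D^\sigma)$ for every cone $\sigma \in \Sigma(d-k)$. This is a statement about the self-intersection of the semiample $\Q$-Cartier divisor $D$ restricted to the $k$-dimensional torus-invariant subvariety $X_\sigma$.

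The key step is to recognize that $X_\sigma$ is itself a $k$-dimensional projective toric variety, associated with the quotient fan $\mathrm{Star}(\sigma)$ in $N_\R/\langle\sigma\rangle_\R$, with lattice $N(\sigma) \defeq N/(N\cap\langle\sigma\rangle_\R)$ and dual lattice $M(\sigma)$. Under this identification the restriction $D|_{X_\sigma}$ is again a semiample torus-invariant $\Q$-Cartier divisor, and its associated polytope in $M(\sigma)_\R$ is precisely $\Delta_D^\sigma$: indeed, the defining inequalities $\langle y, u_\rho\rangle \geq -a_\rho$ for $\rho$ not in $\sigma$, together with the equalities $\langle y, u_\rho\rangle = -a_\rho$ for $\rho \in \sigma$ that cut out the face, are exactly the support-function conditions describing $D|_{X_\sigma}$ on the star fan (here one uses that $D$ semiample forces the support function of $D$ to be well-defined on the quotient after imposing the equalities on $\sigma$). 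Then I would apply the standard toric formula computing the top self-intersection of a semiample (equivalently, nef and Cartier up to multiple) divisor on a $k$-dimensional projective toric variety as $k!$ times the Euclidean volume of its moment polytope (cf. \cite[Theorem 13.4.3 or §5.3]{CLS11}), which gives $(D|_{X_\sigma})^k = k!\cdot\vol_k(\Delta_D^\sigma) = v(\Delta_D^\sigma)$. Combining with the projection formula $[D]^k.[X_\sigma] = (D|_{X_\sigma})^k$ completes the argument; the degenerate case $\dim\Delta_D^\sigma < k$ is handled automatically since both sides vanish.

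The main obstacle I anticipate is bookkeeping the compatibility of lattices and the precise statement of the self-intersection formula in the $\Q$-Cartier, merely-semiample setting: the clean version of "top self-intersection equals normalized volume of the polytope'' is usually stated for ample Cartier divisors or for nef divisors on complete toric varieties, so one either passes to a multiple $mD$ that is Cartier and reduces by $k$-homogeneity, or cites the mixed-volume refinement directly. A secondary subtlety is verifying that the polytope attached to $D|_{X_\sigma}$ on the quotient fan is genuinely $\Delta_D^\sigma$ with its $M(\sigma)$-lattice structure rather than some affine translate living in a different lattice; this is where the hypothesis that $D$ is semiample (so that $\Delta_D$ has dimension $\leq d$ and the support function behaves correctly along $\sigma$) is essential, and I would spell this identification out carefully before quoting the volume formula.
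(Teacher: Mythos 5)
Your proposal follows essentially the same route as the paper: apply Corollary \ref{cortotalchern} to reduce to the toric identity $[D]^k.[X_\sigma] = v(\Delta_D^\sigma)$, restrict $D$ to the $k$-dimensional toric subvariety $X_\sigma$, and invoke the standard ``top self-intersection equals normalized volume of the associated polytope'' formula from \cite[Section 13.4]{CLS11}. The only difference is that you spell out the star-fan and lattice identifications that the paper leaves implicit, which is a correct and welcome elaboration rather than a different argument.
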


\begin{proof}
By Corollary \ref{cortotalchern}, we have 
\begin{align*}
\left[D\right]^k.c_{d-k}^{str}(X) 
= \sum_{\sigma \in \Sigma(d-k)} v(\sigma)\cdot \left[D\right]^k.\left[X_{\sigma}\right].
\end{align*} 
Let $D^{\sigma}$ be the restriction of the semiample torus-invariant $\Q$-Cartier 
divisor 
$D$ to the  $k$-dimensional 
toric subvariety 
$X_{\sigma}$ of $X$. Then  $\left[D\right]^k.\left[X_{\sigma}\right]$ is 
the intersection number $\left[D^{\sigma}\right]^k$ 
of the semiample torus-invariant $\Q$-Cartier divisor 
$D^{\sigma}$ on the $k$-dimensional variety $X_{\sigma}$. 
It remains to note that the  
number $\left[D^{\sigma}\right]^k$ 
equals $v\left(\Delta_D^{\sigma} \right)$
(cf. \cite[Section 13.4]{CLS11}).
\end{proof}

Using Theorem \ref{ZkCn-k} and Corollary \ref{esing2} respectively 
\ref{corcompl2}, we derive combinatorial formulas for the
stringy Euler number of generic hypersurfaces and 
complete intersections in toric varieties:

\begin{coro} \label{ecomb}
Let $X$ be a $d$-dimensional 
projective $\Q$-Gorenstein toric variety   
associated with a fan $\Sigma$
and $D$ 
a semiample torus-invariant Cartier divisor on $X$ together with
the corresponding lattice polytope  $\Delta_{D}$ . Denote by $Z \subseteq X$ a 
generic semiample Cartier divisor such that  $\left[Z\right] = 
\left[ D\right]$. Then the stringy Euler number of $Z$ is
\begin{align*}
e_{str}(Z)=c_{d-1}^{str} (Z) = \sum_{k=0}^{d-1}
(-1)^{k}\sum_{\sigma \in \Sigma(d-1-k)}  v(\sigma) 
\cdot v \left(\Delta_D^{\sigma}\right),
\end{align*}
where $\Delta_D^{\sigma}$ is a face of $\Delta_{D}$ 
corresponding to a cone $\sigma \in \Sigma$.
If $\left[Z\right]= c_1(X)$, 
the formula simplifies to 
\begin{align*}
e_{str}(Z)=c_{d-1}^{str} (Z) = \sum_{k=0}^{d-3}
(-1)^{k}\sum_{\sigma \in \Sigma(d-1-k)}  v(\sigma) 
\cdot v \left(\Delta_D^{\sigma}\right).
\end{align*}
\end{coro}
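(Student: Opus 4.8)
The plan is to derive this corollary by combining the hypersurface formula of Corollary~\ref{esing2} with the toric intersection-number formula of Theorem~\ref{ZkCn-k}. First I would apply Corollary~\ref{esing2} to the generic semiample Cartier divisor $Z$, obtaining
\[
e_{str}(Z) = c_{d-1}^{str}(Z) = \sum_{k=1}^{d} (-1)^{k-1} [Z]^k . c_{d-k}^{str}(X).
\]
Since $[Z] = [D]$ in $\Pic(X)_\Q$ and the intersection numbers $[\,\cdot\,]^k . c_{d-k}^{str}(X)$ of Section~\ref{section2} depend only on the divisor class (they are built from the pullbacks $\rho^*[\,\cdot\,]$), each term on the right equals $[D]^k . c_{d-k}^{str}(X)$.

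Next I would invoke Theorem~\ref{ZkCn-k} for the semiample torus-invariant $\Q$-Cartier divisor $D$, which gives $[D]^k . c_{d-k}^{str}(X) = \sum_{\sigma \in \Sigma(d-k)} v(\sigma)\cdot v(\Delta_D^\sigma)$, and substitute. Reindexing the outer summation by $k' = k-1$, so that the inner sums run over $\Sigma(d-1-k')$ for $k' = 0, \ldots, d-1$, yields the first claimed identity
\[
e_{str}(Z) = \sum_{k=0}^{d-1} (-1)^{k} \sum_{\sigma \in \Sigma(d-1-k)} v(\sigma)\cdot v(\Delta_D^\sigma).
\]
For the special case $[Z] = c_1(X)$ I would instead use the second half of Corollary~\ref{esing2}: there $Z$ has trivial anticanonical class, so the $k=d-1$ and $k=d$ terms of the alternating sum drop out, and the same substitution and index shift truncate the outer sum at $k = d-3$, giving the second identity.

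There is no genuine difficulty here, as both ingredients are already established; the only points deserving a word of care are the bookkeeping of the index shift (matching the cone dimension $d-k$ with $d-1-k'$) and the observation that, $D$ being semiample, $\Delta_D$ is an honest polytope and every face $\Delta_D^\sigma$ appearing is the well-defined face cut out by $\sigma$ as in the discussion preceding Theorem~\ref{ZkCn-k}, with $v(\Delta_D^\sigma)$ understood as the lattice-normalized volume of the appropriate dimension.
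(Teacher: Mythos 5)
Your proposal is correct and follows exactly the route the paper takes: the corollary is stated there as an immediate consequence of Corollary~\ref{esing2} combined with Theorem~\ref{ZkCn-k}, with precisely the index shift $k \mapsto k-1$ you describe and the truncation at $k=d-3$ in the Calabi-Yau case coming from the second half of Corollary~\ref{esing2}. No gaps.
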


\begin{coro} \label{ecomb2}
Let $X$ be a $d$-dimensional 
projective $\Q$-Gorenstein toric variety 
associated with a fan $\Sigma$ 
and $D$
a semiample torus-invariant Cartier divisor on $X$ together with
the corresponding lattice polytope  $\Delta_{D}$.
 Denote by $Z_1, \ldots, Z_r \subseteq X$   
generic semiample Cartier divisors such that  $\left[Z_1\right] = 
\ldots = \left[Z_r\right] =
\left[ D\right]$.
Then the stringy Euler number of the complete intersection $Z_1\cap \ldots \cap Z_r$ is
\begin{align*}
c_{d-r}^{str}\left(Z_1\cap \ldots \cap Z_r\right) 
= \sum_{k=0}^{d-r} (-1)^k \binom{k+r-1}{r-1} 
\sum_{\sigma \in \Sigma(d-r-k)}  v(\sigma) 
\cdot v \left(\Delta_D^{\sigma}\right),
\end{align*}
where $\Delta_D^{\sigma}$ is a face of  $\Delta_{D}$ as above.
\end{coro}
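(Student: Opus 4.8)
The plan is to derive this corollary purely formally, by substituting the combinatorial evaluation of Theorem \ref{ZkCn-k} into the complete-intersection formula of Corollary \ref{corcompl2}. First I would apply Corollary \ref{corcompl2} with $[Z] \defeq [Z_1] = \cdots = [Z_r] = [D]$, which expresses
\begin{align*}
c_{d-r}^{str}\left(Z_1 \cap \ldots \cap Z_r\right) = \sum_{k=0}^{d-r} (-1)^k \binom{k+r-1}{r-1} [Z]^{r+k}. c_{d-r-k}^{str}(X).
\end{align*}
This already reduces the statement to computing each intersection number $[Z]^{r+k}. c_{d-r-k}^{str}(X)$ in terms of faces of $\Delta_D$.

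Next I would use that, by the definition recalled in Section \ref{section2}, the number $[Z]^{m}. c_{d-m}^{str}(X)$ depends only on the class $[Z] \in \mathrm{Pic}(X)_\Q$ — it is assembled from the pullbacks $\rho^*[Z]$ — and hence coincides with $[D]^{m}. c_{d-m}^{str}(X)$ for the torus-invariant representative $D$. Applying Theorem \ref{ZkCn-k} with its running index equal to $m = r+k$, which is legitimate since $0 \le r+k \le d$ throughout the range of summation, then gives
\begin{align*}
[Z]^{r+k}. c_{d-r-k}^{str}(X) = [D]^{r+k}. c_{d-(r+k)}^{str}(X) = \sum_{\sigma \in \Sigma(d-r-k)} v(\sigma) \cdot v\left(\Delta_D^{\sigma}\right).
\end{align*}
Substituting this back into the previous display produces exactly the asserted identity, with the sign $(-1)^k$ and the binomial coefficient $\binom{k+r-1}{r-1}$ carried over verbatim from Corollary \ref{corcompl2}.

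The argument is short, so the closest thing to an obstacle is purely bookkeeping: one must match the running index of Theorem \ref{ZkCn-k} with $r+k$, check the range $0 \le r+k \le d$, and note the (elementary) fact that the face $\Delta_D^{\sigma}$ and its normalized volume $v\left(\Delta_D^{\sigma}\right)$ do not change when $D$ is replaced by a linearly equivalent torus-invariant divisor — such a change only translates $\Delta_D$ by a vector of $M_\Q$ — so that the right-hand side genuinely depends only on the class $[D] = [Z]$. I would close by noting that the case $r = 1$, where $\binom{k}{0} = 1$, recovers Corollary \ref{esing2} and thereby the first formula of Corollary \ref{ecomb}.
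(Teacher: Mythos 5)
Your proposal is correct and is exactly the derivation the paper intends: the corollary is stated immediately after the remark that it follows by combining Corollary \ref{corcompl2} with Theorem \ref{ZkCn-k}, which is precisely your substitution of $[Z]^{r+k}.c_{d-r-k}^{str}(X)=\sum_{\sigma\in\Sigma(d-r-k)}v(\sigma)\cdot v\left(\Delta_D^{\sigma}\right)$ into the complete-intersection formula. The bookkeeping remarks on the index range and the invariance of $v\left(\Delta_D^{\sigma}\right)$ under translation of $\Delta_D$ are sound.
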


One can generalize Theorem \ref{ZkCn-k} and combinatorially 
compute intersection numbers
$\left[D_1\right]. \;  \ldots \; . \left[D_k\right] . 
c_{d-k}^{str}(X)$, where 
$D_1,  \ldots, D_k$
are different semiample torus-invariant $\Q$-Cartier divisors on $X$. 
For this purpose, we use mixed volumes of faces of some convex 
rational polytopes.

\begin{theo} 
Let $X$ be a $d$-dimensional 
projective $\Q$-Gorenstein toric variety 
associated with a fan $\Sigma$ and
$D_1, \ldots, D_k$ semiample torus-invariant 
$\Q$-Cartier divisors on $X$. 
Then
\begin{align*}
\left[D_1\right]. \; \ldots \; . 
\left[D_k\right].c_{d-k}^{str}(X) = \sum_{\sigma \in \Sigma(d-k)}v(\sigma) 
\cdot v\left(\Delta_{D_1}^{\sigma}, \ldots, \Delta_{D_k}^{\sigma}  \right),
\end{align*}
where
$\Delta_{D_i}^{\sigma}$ is a
face of $\Delta_{D_i}$ $(1 \leq i \leq k)$ corresponding to 
a cone $\sigma \in \Sigma$ 
and $v\left(\Delta_{D_1}^{\sigma}, \ldots, \Delta_{D_k}^{\sigma}
\right)$ denotes the mixed volume of the polytopes 
$\Delta_{D_1}^{\sigma}, \ldots, \Delta_{D_k}^{\sigma}$
with respect to the sublattice $M(\sigma) \subseteq M$.
\end{theo}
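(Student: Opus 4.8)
The plan is to reduce the statement, exactly as in the proof of Theorem~\ref{ZkCn-k}, to the well-known correspondence between intersection numbers of semiample torus-invariant divisors on a complete toric variety and mixed volumes of the associated polytopes. First I would apply Corollary~\ref{cortotalchern} to write
\begin{align*}
\left[D_1\right]. \; \ldots \; . \left[D_k\right].c_{d-k}^{str}(X)
= \sum_{\sigma \in \Sigma(d-k)} v(\sigma) \cdot
\left[D_1\right]. \; \ldots \; . \left[D_k\right].\left[X_\sigma\right],
\end{align*}
which isolates a purely local computation on each $k$-dimensional toric subvariety $X_\sigma$.

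Next, for a fixed $\sigma \in \Sigma(d-k)$, let $D_i^\sigma$ be the restriction of $D_i$ to $X_\sigma$. Since each $D_i$ is a semiample torus-invariant $\Q$-Cartier divisor, so is its restriction, and by the projection formula $\left[D_1\right]. \; \ldots \; . \left[D_k\right].\left[X_\sigma\right]$ equals the self-intersection-type number $\left[D_1^\sigma\right]. \; \ldots \; .\left[D_k^\sigma\right]$ computed on the complete $k$-dimensional toric variety $X_\sigma$. The polytope of $D_i^\sigma$ with respect to the sublattice $M(\sigma) = \{m \in M \mid \langle m, u'\rangle = 0 \ \forall u' \in \sigma\}$ is precisely the face $\Delta_{D_i}^\sigma$ of $\Delta_{D_i}$ defined before Theorem~\ref{ZkCn-k} (a translate of it, which does not affect volumes). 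The classical toric dictionary (cf. \cite[Section 13.4]{CLS11}, or the Bernstein--Kushnirenko theorem for the mixed case) then gives
\begin{align*}
\left[D_1^\sigma\right]. \; \ldots \; .\left[D_k^\sigma\right]
= v\left(\Delta_{D_1}^\sigma, \ldots, \Delta_{D_k}^\sigma\right),
\end{align*}
where the right-hand side is the mixed volume normalized so that the diagonal case $\Delta_{D_1}^\sigma = \cdots = \Delta_{D_k}^\sigma = P$ recovers $k!\cdot\vol_k(P) = v(P)$; this is the normalization already used in Theorem~\ref{ZkCn-k}, so the two results are consistent. Substituting back into the sum over $\sigma \in \Sigma(d-k)$ yields the claimed formula.

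The main point requiring care is the normalization and the multilinearity bookkeeping rather than any deep new idea. Concretely, one must check that the mixed volume $v(\cdot,\ldots,\cdot)$ as it appears here is the symmetric multilinear extension of $P \mapsto k!\vol_k(P)$, so that expanding $\left[n_1 D_1 + \cdots + n_k D_k\right]^k$ via Theorem~\ref{ZkCn-k}, comparing coefficients of $n_1\cdots n_k$, and invoking the symmetry and multilinearity of both sides in $([D_1],\ldots,[D_k]) \in \Pic(X)_\Q^k$ produces exactly the asserted identity; this polarization argument also lets one deduce the general case from the already-proved equal-divisor case of Theorem~\ref{ZkCn-k} without re-deriving the toric intersection theory, provided one only needs the statement for $\Q$-Cartier (equivalently, after scaling, Cartier) divisors. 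I would present the direct face-restriction argument as the main proof and remark that the polarization of Theorem~\ref{ZkCn-k} gives an alternative route.
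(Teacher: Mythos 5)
Your proposal is correct and follows essentially the same route as the paper's own proof: apply Corollary \ref{cortotalchern} to reduce to the intersection numbers $\left[D_1\right].\;\ldots\;.\left[D_k\right].\left[X_\sigma\right]$, restrict the divisors to the $k$-dimensional toric subvariety $X_\sigma$, and invoke the classical identification of such toric intersection numbers with normalized mixed volumes of the corresponding faces (the paper cites \cite[Section 5.4]{Ful93} for this last step). Your additional remarks on normalization and on deducing the general case by polarizing Theorem \ref{ZkCn-k} are sound but not needed for the argument.
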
 

\begin{proof}
Let $\sigma \in  \Sigma(d-k)$ be a $(d-k)$-dimensional cone. Then 
we restrict the semiample torus-invariant 
$\Q$-Cartier divisors $D_1, \ldots, D_k$ to 
the corresponding projective $k$-dimensional toric subvariety 
 $X_\sigma$ of $X$ and obtain $k$ semiample torus-invariant $\Q$-Cartier 
divisors $D_1^{\sigma}, \ldots, D_k^\sigma$ on $X_\sigma$. 
It remains to 
apply  Corollary \ref{cortotalchern} and  the formula in  
\cite[Section 5.4]{Ful93} that claims that
the intersection number 
\begin{align*}
\left[D_1\right]. \; \ldots \; . 
\left[D_k\right] . \left[X_{\sigma}\right] = \left[D_1^{\sigma}\right]. \; \ldots \; . 
\left[D_k^{\sigma}\right]
\end{align*}
can be computed as the  mixed volume
$v\left(\Delta_{D_1}^{\sigma}, \ldots, \Delta_{D_k}^{\sigma}  
\right)$ of the polytopes $ 
\Delta_{D_i}^{\sigma}$. 
\end{proof}

\section{Stringy Libgober-Wood identity 
for toric varieties} \label{section4}
The identity
\begin{align*}
\frac{d^2}{d u^2} E\left(V;u,1\right)\Big\vert_{u=1} =
\frac{3d^2-5d}{12} c_d(V) + \frac{1}{6} c_1(V).c_{d-1}(V)
\end{align*}
has been proved by Libgober and Wood \cite{LW90} for arbitrary 
smooth $d$-dimensional  
projective varieties $V$. 
This identity is equivalent to 
\begin{align*} 
\sum_{0 \leq p,q \leq d} (-1)^{p+q} h^{p,q} (V) 
\left(p-\frac{d}{2} \right)^{2}= \frac{d}{12} c_d(V) 
+ \frac{1}{6} c_1(V).c_{d-1}(V)
\end{align*}
and so the intersection number $c_1(V).c_{d-1}(V)$
can be expressed via the Hodge numbers $h^{p,q} (V)$ of $V$ \cite{Bor97}.

There exists a stringy version of the Libgober-Wood identity 
\begin{align} \label{eq1.2} 
\frac{d^2}{d u^2} E_{str}\left(X;u,1\right)\Big\vert_{u=1} =
\frac{3d^2-5d}{12} c_d^{str}(X) + \frac{1}{6} c_1(X).c_{d-1}^{str}(X),
\end{align}
which holds for any  $d$-dimensional projective variety $X$ 
with at worst log-terminal singularities 
\cite[Theorem 3.8]{Bat00} (cf. Equation \eqref{eq1}).

Moreover, if the singularities of $X$ are at worst canonical Gorenstein and 
the stringy $E$-function $E_{str}(X; u, v)$ is
a polynomial $\sum_{p,q} \psi_{p,q} u^{p} v^{q}$, then one can define 
the {\em stringy Hodge numbers} $h_{str}^{p,q}(X)$ of $X$ \cite{Bat98, Bat00}
as 
\begin{align*}
h_{str}^{p,q}(X) \defeq (-1)^{p+q} \psi_{p,q}.
\end{align*}
In this case, the stringy Libgober-Wood identity can be equivalently 
reformulated \cite[Corollary 3.10]{Bat00} as
\begin{align} \label{eq2}
\sum_{0 \leq p,q \leq d} (-1)^{p+q} h_{str}^{p,q} \left( X \right) \left(p-\frac{d}{2} \right)^{2}
= \frac{d}{12} c^{str}_d (X) + \frac{1}{6} c_1(X).c_{d-1}^{str}(X).
\end{align}

In this section, we are interested in  a combinatorial interpretation
 of these stringy Libgober-Wood identities \eqref{eq1.2}, \eqref{eq2} 
for arbitrary $d$-dimensional 
projective $\Q$-Go\-ren\-stein toric varieties $X$
associated with a fan $\Sigma$. Let $q_X$ be the smallest
positive integer such that 
$q_XK_X$ is a Cartier divisor. The number  $q_X$ 
is called {\em Gorenstein index} of $X$.
Note that the stringy $E$-function  $ E_{str}\left(X;u,v\right)$ 
of such a toric variety $X$  
can be computed combinatorially as
\begin{align} \label{theo4.3}
E_{str}\left(X;u,v\right) 
= (uv-1)^d \sum_{\sigma \in \Sigma} \sum_{n \in \sigma^{\circ} \cap N} (uv)^{\kappa (n)},
\end{align}
where $\kappa$ is the $\Sigma$-piecewise linear function 
corresponding to 
the anticanonical divisor of $X$  
and $\sigma^{\circ}$ is the relative interior of a cone $\sigma \in \Sigma$
\cite[Theorem 4.3]{Bat98}. We remark that 
$\kappa$ has value $-1$ on every primitive lattice generator of a $1$-dimensional 
cone $\sigma \in \Sigma(1)$ and that 
the value ${\kappa (n)}$ $(n \in N)$ 
belongs to $\frac{1}{q_X} \Z$. 

First, we show that the stringy $E$-function 
$E_{str}\left(X;u,v\right)$ is a   
polynomial with nonnegative integral coefficients $\psi_\alpha(\Sigma)$ 
in nonnegative  rational powers $\alpha \in [0,d] \cap \frac{1}{q_X} \Z
$ of $uv$:

\begin{prop} \label{theo4.3new} 
Let $X$ be a $d$-dimensional projective $\Q$-Gorenstein
toric variety of Gorenstein index $q_X$ 
associated with a fan $\Sigma$ in $N_{\R}$ 
and $\Sigma'$ a simplicial subdivision of the 
fan $\Sigma$ such that $\Sigma'(1) =\Sigma(1)$. 
For any cone  $\sigma \in \Sigma'$, we  
denote by $\square_{\sigma}^{\circ}$ 
the relative interior of the 
parallelepiped 
$\square_{\sigma}$
spanned by the  primitive lattice generators of the cone $\sigma$. 
Then the stringy $E$-function can be computed as a 
finite sum 
\begin{align*} 
E_{str}\left(X;u,v\right) = \sum_{\sigma \in \Sigma'} (uv-1)^{d-\dim(\sigma)} 
\sum_{n' \in \square_{\sigma}^{\circ} \cap N} (uv)^{\dim(\sigma)+\kappa (n')}.
\end{align*}
Moreover,  the stringy $E$-function can be written as a finite sum
\begin{align*}
E_{str}\left(X;u,v\right)= \sum_{\alpha \in  [0,d] \cap \frac{1}{q_X} \Z} 
\psi_{\alpha}(\Sigma) (uv)^{\alpha},
\end{align*}
where the coefficients $\psi_{\alpha}(\Sigma)$ are 
nonnegative integers satisfying the conditions 
$\psi_{0}(\Sigma) = \psi_{d}(\Sigma) =1$ and 
$\psi_{\alpha}(\Sigma) = \psi_{d-\alpha}(\Sigma)$ for all 
$\alpha \in  [0,d] \cap \frac{1}{q_X} \Z$. 
\end{prop}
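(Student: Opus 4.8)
The plan is to start from the known combinatorial formula \eqref{theo4.3} for $E_{str}(X;u,v)$ and reorganize the sum over lattice points using the simplicial subdivision $\Sigma'$. The key observation is that $\Sigma'$ has the same rays as $\Sigma$, hence the same piecewise linear function $\kappa$, and every cone $\sigma \in \Sigma$ is a disjoint union (on relative interiors) of cones of $\Sigma'$; so $\sum_{\sigma \in \Sigma}\sum_{n \in \sigma^\circ \cap N}(uv)^{\kappa(n)} = \sum_{\sigma' \in \Sigma'}\sum_{n \in (\sigma')^\circ \cap N}(uv)^{\kappa(n)}$, and it suffices to prove the claimed formula for a single simplicial cone $\sigma$ of dimension $m := \dim(\sigma)$. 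First I would fix such a $\sigma$ with primitive ray generators $u_1,\dots,u_m$ and decompose its relative interior lattice points: every $n \in \sigma^\circ \cap N$ can be written uniquely as $n = n' + \sum_{i} \lambda_i u_i$ with $\lambda_i \in \Z_{\geq 0}$ and $n' \in \square_\sigma^\circ \cap N$ (the half-open fundamental parallelepiped, taken with its \emph{relative interior} so that all coordinates lie in $(0,1]$). Since $\kappa$ is linear on $\sigma$ with $\kappa(u_i) = -1$, we get $\kappa(n) = \kappa(n') - \sum_i \lambda_i$, so the inner geometric series over the $\lambda_i$ contributes a factor $\big(\sum_{\lambda \geq 0}(uv)^{-\lambda}\big)^m = \big(\tfrac{uv}{uv-1}\big)^m$, giving
\begin{align*}
\sum_{n \in \sigma^\circ \cap N}(uv)^{\kappa(n)} = \Big(\frac{uv}{uv-1}\Big)^{m}\sum_{n' \in \square_\sigma^\circ \cap N}(uv)^{\kappa(n')} = \Big(\frac{uv}{uv-1}\Big)^{m}\sum_{n' \in \square_\sigma^\circ \cap N}(uv)^{-m}(uv)^{m+\kappa(n')}.
\end{align*}
Multiplying by the global prefactor $(uv-1)^d$ from \eqref{theo4.3} and using $(uv-1)^d \cdot (uv-1)^{-m} = (uv-1)^{d-m}$ yields exactly the summand $(uv-1)^{d-\dim(\sigma)}\sum_{n' \in \square_\sigma^\circ \cap N}(uv)^{\dim(\sigma)+\kappa(n')}$, proving the first displayed formula after summing over $\sigma \in \Sigma'$.

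Next I would address the polynomiality and the shape of the coefficients. Expanding $(uv-1)^{d-\dim(\sigma)}$ by the binomial theorem and collecting powers of $uv$, each term is an integer multiple of a nonnegative rational power of $uv$; since $\kappa(n') \in \tfrac{1}{q_X}\Z$ and $\dim(\sigma) + \kappa(n') \geq 0$ for $n'$ in the relative interior (because $\kappa \geq -1$ on each ray generator and there are $\dim(\sigma)$ of them, with strict positivity coming from the interior condition unless $n'=0$, which only happens for $\sigma = \{0\}$), all exponents lie in $[0,d]\cap \tfrac{1}{q_X}\Z$, so $E_{str}$ is a Laurent-free finite sum $\sum_\alpha \psi_\alpha(\Sigma)(uv)^\alpha$ with $\psi_\alpha(\Sigma) \in \Z$. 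Nonnegativity of the $\psi_\alpha(\Sigma)$ is \emph{not} visible from this binomial expansion; instead I would invoke the fact that $E_{str}(X;u,v)$ depends only on $X$ and not on the chosen fan structure used to compute it, take a crepant (or merely log-) resolution via a refinement, and identify $\psi_\alpha(\Sigma)$ with a genuine Hodge-theoretic / stringy Hodge number (or a sum of Betti-type numbers of strata), which is manifestly nonnegative; alternatively one argues directly that the quantity $\sum_{\sigma}(uv-1)^{d-\dim\sigma}\sum_{n'}(uv)^{\dim\sigma + \kappa(n')}$ is the stringy $E$-function and apply the established nonnegativity from \cite{Bat98, Bat00}. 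The normalization $\psi_0(\Sigma)=\psi_d(\Sigma)=1$ follows by isolating the contributions: the constant term comes only from $\sigma = \{0\}$ (giving $n' = 0$, exponent $0$, coefficient $1$, with the binomial expansion of $(uv-1)^d$ contributing no other constant term once combined with positive-dimensional cones — this requires a short check), and the top term $(uv)^d$ similarly comes from the leading binomial coefficient. The symmetry $\psi_\alpha(\Sigma) = \psi_{d-\alpha}(\Sigma)$ is the Poincaré-duality / functional-equation property of the stringy $E$-function, $E_{str}(X;u,v) = (uv)^d E_{str}(X; u^{-1}, v^{-1})$, which holds for projective $X$ and can be cited from \cite{Bat98}, or derived directly from the displayed formula by pairing the lattice point $n'$ in $\square_\sigma^\circ$ with $u_1 + \dots + u_{\dim\sigma} - n'$, a lattice-point involution on $\square_\sigma^\circ$ that sends $\kappa(n') \mapsto -\dim(\sigma) - \kappa(n')$ and hence the exponent $\dim(\sigma)+\kappa(n') \mapsto -\kappa(n')$, combined with the sign-reversing symmetry of $(uv-1)^{d-\dim\sigma}$ under $uv \mapsto (uv)^{-1}$.

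The main obstacle I anticipate is establishing \emph{nonnegativity} of the coefficients $\psi_\alpha(\Sigma)$ cleanly. The reorganized formula makes finiteness, integrality, and the functional equation essentially combinatorial, but positivity is a deeper fact: purely at the level of the alternating binomial expansion there is massive cancellation, so one genuinely needs either the interpretation of $E_{str}$ as counting (stringy) Hodge numbers of a resolution — which forces me to invoke independence of the log-desingularization and the motivic/Hodge-theoretic content behind \eqref{Estr} — or a direct combinatorial positivity argument in the spirit of Stanley's nonnegativity of $h^*$-vectors, suitably adapted to the half-open parallelepiped decomposition and the grading by $\kappa$. I would present the nonnegativity by citing \cite{Bat98, Bat00} (where the stringy $E$-function of a projective variety with log-terminal, Gorenstein-type singularities is known to be a polynomial with the stated positivity when it is a polynomial at all), and reserve the explicit combinatorial proof — via the shelling-type decomposition of $\Sigma'$ and the resulting nonnegative expansion — as the substantive content, flagging that the half-open cone decomposition plus the $\kappa$-grading is exactly what converts the alternating sum into a manifestly nonnegative one.
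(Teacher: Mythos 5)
Your argument is essentially the paper's own proof: the first formula is obtained in exactly the same way (refine to $\Sigma'$, decompose each lattice point of $\sigma^{\circ}\cap N$ as $n'+\sum_i k_iu_i$ with $n'$ in the half-open fundamental parallelepiped, sum the geometric series in $(uv)^{-1}$), and for the second part the paper likewise cites Poincar\'e duality of $E_{str}$ for the symmetry and the graded Cohen--Macaulay/artinian-ring interpretation of \cite[Theorem 2.11]{Bat93} --- i.e.\ your ``Stanley-type'' fallback, which is the route you should make primary --- for nonnegativity. Two small corrections to the optional parts of your sketch: your first-choice justification of nonnegativity via stringy Hodge numbers of a resolution does not apply here, since for $q_X>1$ the exponents $\alpha$ are fractional and $X$ need not be Gorenstein, so these coefficients are not Hodge numbers of anything; and your claim that the constant term comes only from $\sigma=\{0\}$ is false --- every cone $\sigma\in\Sigma'$ contributes $(-1)^{d-\dim\sigma}$ to the constant term (via $n'=u_1+\cdots+u_{\dim\sigma}$), and one needs the Euler relation $\sum_{\sigma\in\Sigma'}(-1)^{\dim\sigma}=(-1)^d$ for a complete fan, or, more simply, $\psi_0=\psi_d$ from the functional equation together with the (correct) observation that only the zero cone produces $(uv)^d$. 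Note also that your direct involution $n'\mapsto u_1+\cdots+u_{\dim\sigma}-n'$ does not preserve the half-open parallelepiped, so the ``derived directly'' alternative for the symmetry would need cross-cone bookkeeping; the citation of \cite[Theorem 3.7]{Bat98} is the clean route and is what the paper does.
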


\begin{proof} 
Any $s$-dimensional simplicial cone $\sigma$ 
of $\Sigma'(s)$ is generated by $s$ linearly independent primitive lattice 
vectors $u_1,\ldots, u_s$. Therefore, any lattice point $n \in 
{\sigma}^{\circ} \cap N$ 
has a unique representation as a sum $n= n' + n''$, where 
$n'  = \sum_{i=1}^s \lambda_i u_i \in \square_{\sigma}^{\circ} \cap N$ 
$(0 \leq \lambda_i \leq 1)$ and $n''$ is  a  
linear combination $n'' =\sum_{i=1}^s k_i u_i$ with nonnegative 
integral coefficients $k_i$. Therefore, 
one has 
\begin{align*}
(uv-1)^s \! \! \! \! \sum_{n \in \sigma^{\circ} \cap N} \! \! (uv)^{\kappa (n)}
&= (uv-1)^s \! \! \! \! \sum_{n' \in \square_{\sigma}^{\circ} \cap N} \! \! 
(uv)^{\kappa (n')} \prod_{i=1}^s \bigg( \sum_{k_i \in \Z_{\geq 0}} (uv)^{-k_i} \bigg)  \\
&= (uv-1)^s \! \! \! \!\sum_{n' \in \square_{\sigma}^{\circ} \cap N} \! \!
(uv)^{\kappa (n')} \cdot \bigg( \frac{1}{1-(uv)^{-1}} \bigg)^s 
= \! \! \! \! \sum_{n' \in \square_{\sigma}^{\circ} \cap N}\! \! (uv)^{s+ \kappa (n')}
\end{align*}
and the first statement of Proposition \ref{theo4.3new} 
follows from Equation \eqref{theo4.3}. 
Since $\kappa$ has value $-1$ on every primitive 
lattice generator $u_i$ and $q_X \cdot \kappa(n) \in \Z$ for all $n \in N$,  
 we obtain that  $s + \kappa(n')= 
s -  \sum_{i=1}^s \lambda_i$ is a nonnegative rational number in
 $\frac{1}{q_X} \Z_{\geq 0}$. Therefore,  
$E_{str}\left(X;u,v\right)$ can be written as 
a finite sum $E_{str}\left(X;u,v\right) =
\sum_{\alpha} \psi_{\alpha}(\Sigma) (uv)^\alpha$ for 
some integral coefficients  $\psi_{\alpha}(\Sigma) $ and some 
nonnegative rational numbers $\alpha$ in $\frac{1}{q_X} \Z_{\geq 0}$.  
The  Poincar\'{e} duality \cite[Theorem 3.7]{Bat98} 
for the stringy $E$-function $$E_{str}\left(X;u,v\right) 
= (uv)^d E_{str}\left(X;u^{-1} ,v^{-1}  \right)$$ 
delivers the equalities 
$\psi_{\alpha}(\Sigma) = \psi_{d-\alpha}(\Sigma)$. This implies 
 $\alpha \leq d$ as soon as  $\psi_{\alpha}(\Sigma) \neq 0$. 
Therefore, we obtain 
$$E_{str}\left(X;u,v\right) =
\sum_{\alpha   \in [0, d] \cap 
\frac{1}{q_X} \Z} \psi_{\alpha}(\Sigma) (uv)^\alpha. $$
The nonnegativity of the coefficients  $\psi_{\alpha}(\Sigma)$ can be 
shown using an interpretation of the coefficients 
$\psi_{\alpha}(\Sigma)$ as dimensions 
of graded homogenous components of a graded artinian ring $R$ obtained
as a quotient of a graded Cohen-Macaulay ring $S$ by a regular 
sequence of homogeneous elements 
(cf. \cite[Theorem 2.11]{Bat93}). 
\end{proof} 

\begin{coro}\label{2-dime}
Let $X$ be a $2$-dimensional projective $\Q$-Gorenstein
toric variety associated with a fan $\Sigma$ in $N_{\R}$. Then
\begin{align*} 
E_{str}\left(X;u,v\right) =(uv-1)^2 +  
\sum_{n \in N \atop  \kappa(n)=-1} 
 uv + \sum_{n \in N \atop -1 < \kappa(n) < 0} 
\left( (uv)^{2+\kappa(n)} + (uv)^{-\kappa(n)} \right).
\end{align*}
\end{coro}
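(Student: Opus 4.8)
The plan is to specialize Proposition~\ref{theo4.3new} to $d=2$. A complete fan $\Sigma$ in $N_{\R}\cong\R^{2}$ is automatically simplicial, because a two-dimensional strongly convex rational cone is spanned by two linearly independent primitive lattice vectors; so I may take $\Sigma'=\Sigma$ and write
\[
E_{str}(X;u,v)=\sum_{\sigma\in\Sigma}(uv-1)^{2-\dim(\sigma)}\sum_{n'\in\square_{\sigma}^{\circ}\cap N}(uv)^{\dim(\sigma)+\kappa(n')}.
\]
(Equivalently one may collapse the geometric series $\sum_{k\geq0}(uv)^{-k}$ in Equation~\eqref{theo4.3} over the two rays of each $2$-dimensional cone, which also makes the relevant half-open parallelepiped unambiguous.) First I would split the outer sum according to $\dim(\sigma)\in\{0,1,2\}$: the unique $0$-dimensional cone gives $(uv-1)^{2}$; a ray $\rho$ with primitive generator $u_{\rho}$ has $\square_{\rho}^{\circ}\cap N=\{u_{\rho}\}$ by primitivity and $\kappa(u_{\rho})=-1$, hence contributes $(uv-1)(uv)^{1+\kappa(u_{\rho})}=uv-1$; and a $2$-dimensional cone $\sigma=\langle u_{1},u_{2}\rangle$ contributes $\sum_{n'\in\square_{\sigma}^{\circ}\cap N}(uv)^{2+\kappa(n')}$, where, writing $n'=\lambda_{1}u_{1}+\lambda_{2}u_{2}$, one has $\kappa(n')=-\lambda_{1}-\lambda_{2}$, so the exponent $2+\kappa(n')$ lies in $(0,2]$.

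Next I would sort the lattice points of $\square_{\sigma}$ for a fixed $\sigma=\langle u_{1},u_{2}\rangle$ by their $\kappa$-value. The crucial observation --- this is where primitivity of $u_{1},u_{2}$ enters a second time --- is that the only lattice points on the boundary of the parallelogram $\square_{\sigma}$ are its four vertices $0,u_{1},u_{2},u_{1}+u_{2}$; hence the only lattice point of $\square_{\sigma}^{\circ}$ outside the strict interior of $\square_{\sigma}$ is the corner $u_{1}+u_{2}$, which contributes $(uv)^{0}=1$. Among the strict-interior lattice points of $\square_{\sigma}$, those with $\kappa(n')=-1$ are precisely the ones lying in the relative interior of the segment $[u_{1},u_{2}]$ (they contribute $uv$ each), and the remaining ones have $\kappa(n')\in(-2,-1)\cup(-1,0)$. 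Summing over all cones, the corners yield $|\Sigma(2)|$ copies of $(uv)^{0}$ and the rays yield $|\Sigma(1)|(uv-1)$; since a complete fan in $\R^{2}$ satisfies $|\Sigma(1)|=|\Sigma(2)|$ (rays and maximal cones alternate around the origin), these combine with $(uv-1)^{2}$ into $(uv-1)^{2}+|\Sigma(1)|\,uv$. Moreover, the relative-interior-of-segment points with $\kappa(n')=-1$, ranging over all $\sigma$, are exactly those $n\in N$ with $\kappa(n)=-1$ that are not primitive ray generators; together with the $|\Sigma(1)|$ ray generators they account for one term $uv$ for every $n\in N$ with $\kappa(n)=-1$.

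It then remains to identify the contribution $R$ of the strict-interior lattice points with $\kappa(n')\neq-1$. Here the key tool is the central symmetry $n'\mapsto u_{1}+u_{2}-n'$ of $\square_{\sigma}$: it preserves the set of strict-interior lattice points and carries $\kappa(n')$ to $-2-\kappa(n')$, so it matches the points with $\kappa(n')\in(-2,-1)$ bijectively with those with $\kappa(n')\in(-1,0)$. Combining this with the observation that every $n\in N$ with $-1<\kappa(n)<0$ lies in the strict interior of a unique $2$-dimensional cone --- it cannot lie on a ray, since the value of $\kappa$ at a lattice point of a ray is a non-positive integer --- I would obtain $R=\sum_{n\in N,\ -1<\kappa(n)<0}\bigl((uv)^{2+\kappa(n)}+(uv)^{-\kappa(n)}\bigr)$, which together with the preceding paragraph gives the asserted formula. (The symmetry $\kappa(n')\leftrightarrow-2-\kappa(n')$ used here is a shadow of the Poincar\'e duality $E_{str}(X;u,v)=(uv)^{2}E_{str}(X;u^{-1},v^{-1})$ of \cite[Theorem 3.7]{Bat98}, and one may alternatively deduce the shape of $R$ from it.) All the sums above are finite, since $\{n\in N:\kappa(n)\geq-1\}$ is bounded ($\kappa$ tends to $-\infty$ along every $2$-dimensional cone). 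The only genuinely delicate points, I expect, are the boundary bookkeeping for the parallelepipeds and the cancellation $|\Sigma(1)|(uv-1)+|\Sigma(2)|=|\Sigma(1)|\,uv$; the rest is routine.
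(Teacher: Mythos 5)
Your proposal is correct and follows essentially the same route as the paper: specialize Proposition \ref{theo4.3new} with $\Sigma'=\Sigma$, split by cone dimension, use primitivity to control boundary lattice points of each parallelogram, pair interior points via the central symmetry $n\mapsto u_1+u_2-n$ (so $\kappa(n)+\kappa(n^*)=-2$), and absorb the corner contributions using $|\Sigma(1)|=|\Sigma(2)|$. Your extra care about the half-open convention for $\square_\sigma^{\circ}$ and about why points with $-1<\kappa(n)<0$ cannot lie on rays only makes explicit what the paper leaves implicit.
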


\begin{proof}
We do not need a subdivision $\Sigma'$ of the fan 
$\Sigma$ because every cone $\sigma \in \Sigma$ already is simplicial. 
Therefore, we set $\Sigma' = \Sigma$. 
By  Proposition \ref{theo4.3new}, we obtain 
\begin{align*}
E_{str} \left(X;u,v\right) 
&= \sum_{\sigma \in \Sigma } (uv-1)^{2-\dim(\sigma)} 
\sum_{n \in \square_{\sigma}^{\circ} \cap N} (uv)^{\dim(\sigma)+\kappa (n)} \\
&=(uv-1)^2 + \sum_{\sigma \in \Sigma(1)} (uv-1) + 
\sum_{\sigma \in \Sigma(2)} \sum_{n \in \square_{\sigma}^{\circ} \cap N} (uv)^{2+\kappa(n)}.
\end{align*}

For any $2$-dimensional cone $\sigma \in \Sigma(2)$, the set 
$\{ x \in N_\R \; : \; \kappa(x) =-1\}$ divides the parallelogram
$\square_{\sigma}$ into two isomorphic lattice triangles 
$\triangle^\sigma_{\leq -1}$ and  $\triangle^\sigma_{\geq -1}$. Let 
$u_1, u_2$ be the primitive lattice generators of $\sigma$.  
We can write 
every lattice point $n \in \square_\sigma$ as a linear combination
$n = \lambda_1 u_1 + \lambda_2 u_2$ with rational coefficients 
$\lambda_1, \lambda_2 \in [0,1]$. A lattice point $n \in \square_{\sigma}$ 
belongs to the 
triangle $\triangle^\sigma_{\geq -1}$ if and only if the lattice point
$n^*:= u_1 +  u_2 -n$ belongs to the triangle $\triangle^\sigma_{\leq -1}$. 
Since the boundary of the lattice  parallelogram
$\square_{\sigma}$ has no lattice points except vertices, we can use the 
bijection 
 $n \leftrightarrow n^*$ together with the equation 
$\kappa(n) + \kappa(n^*) = -2$ to obtain 
\begin{align*} 
\sum_{n \in \square_{\sigma}^{\circ} \cap N} (uv)^{2+\kappa(n)} = 
1 + \sum_{n \in \sigma^\circ \cap N  \atop  \kappa(n)=-1} 
 uv  + \sum_{n \in \sigma \cap N \atop -1 < \kappa(n)<0} 
\left( (uv)^{2+\kappa(n)} + (uv)^{-\kappa(n)} \right). 
\end{align*}
It remains to apply the equalities $|\Sigma(1) | =  |\Sigma(2) |$
and 
\begin{align*} 
\sum_{\sigma \in \Sigma(1)}  uv + \sum_{\sigma \in \Sigma(2)} 
\sum_{n \in \sigma^\circ \cap N  \atop  \kappa(n)=-1} uv = 
\sum_{n \in N \atop  \kappa(n)=-1} uv . 
\end{align*}
\end{proof}

The equality $E_{str}\left(X;u,v\right) = 
\sum_{\alpha} \psi_{\alpha}\left(\Sigma\right)  (uv)^{\alpha}$  
in Proposition \ref{theo4.3new} suggests that
the nonnegative integral 
coefficients  $\psi_{\alpha}\left(\Sigma\right)$
may be interpreted 
as {\em generalized stringy Hodge numbers} $h^{\alpha, \alpha}_{str}(X)$ of the 
toric variety $X$ for some 
rational numbers $\alpha \in [0, d] \cap 
\frac{1}{q_X} \Z$.  

The following theorem presents a
combinatorial interpretation for the second version 
of  the stringy Libgober-Wood identity \eqref{eq2} 
using the generalized stringy Hodge numbers of the toric variety $X$.

\begin{theo} \label{refstrLW}
Let $X$ be a $d$-dimensional projective $\Q$-Gorenstein
toric variety of Gorenstein index $q_X$  associated with a fan $\Sigma$ 
and $-K_X = \sum_{\rho \in \Sigma(1)} D_{\rho}$ 
the anticanonical torus-invariant $\Q$-Cartier 
divisor on $X$.
Then the stringy
Libgober-Wood identity 
is equivalent to
\begin{align*}
\sum_{\alpha \in [0, d] \cap 
\frac{1}{q_X}\Z} \psi_{\alpha} \left(\Sigma\right) 
\left(\alpha - \frac{d}{2} \right)^2 = \frac{d}{12} v(\Sigma) 
+ \frac{1}{6} \sum_{\sigma \in \Sigma(d-1)}v(\sigma) 
\cdot l_{-K_X}(\sigma),
\end{align*}
where
$\psi_{\alpha} \left(\Sigma\right) $ are nonnegative integers 
as above,
$v\left(\Sigma\right) \defeq \sum_{\sigma \in \Sigma(d)} v (\sigma)$, and
$l_{-K_X}(\sigma) \in \Q$ is the intersection number 
$[-K_X] .[X_\sigma]$ 
(cf. Proposition \ref{C1Cn-1}).
If $-K_X$ is semiample, then
\begin{align*}
\sum_{\alpha \in [0, d] \cap 
\frac{1}{q_X}\Z} \psi_{\alpha} \left(\Sigma\right) 
\left(\alpha - \frac{d}{2} \right)^2 = \frac{d}{12} v(\Sigma) 
+ \frac{1}{6} \sum_{\sigma \in \Sigma(d-1)}v(\sigma) 
\cdot v\left(\Delta_{-K_X}^{\sigma} \right) ,
\end{align*}
where 
$\Delta_{-K_X}^{\sigma}$ is a face of the rational 
polytope $\Delta_{-K_X}$ corresponding to a cone $\sigma \in \Sigma$
(cf. Theorem \ref{ZkCn-k}).
\end{theo}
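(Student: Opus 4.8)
The plan is to translate each side of the stringy Libgober–Wood identity \eqref{eq2} into combinatorial data attached to the fan $\Sigma$. On the left-hand side, I would use Proposition~\ref{theo4.3new}, which expresses $E_{str}(X;u,v)=\sum_{\alpha}\psi_\alpha(\Sigma)(uv)^\alpha$ as a polynomial in $uv$. Since every monomial has the form $(uv)^\alpha$, setting $v=1$ gives $E_{str}(X;u,1)=\sum_\alpha \psi_\alpha(\Sigma)u^\alpha$, so a direct computation of $\tfrac{d^2}{du^2}E_{str}(X;u,1)|_{u=1}=\sum_\alpha \psi_\alpha(\Sigma)\,\alpha(\alpha-1)$. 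Combining this with the known reformulation \eqref{eq2} (which itself rests on $\sum_\alpha\psi_\alpha(\Sigma)=e_{str}(X)=c_d^{str}(X)$ and the symmetry $\psi_\alpha=\psi_{d-\alpha}$ from Proposition~\ref{theo4.3new}), the left-hand side becomes exactly $\sum_\alpha\psi_\alpha(\Sigma)(\alpha-\tfrac d2)^2$. So the left-hand side of the claimed identity is nothing but a rewriting of the left-hand side of \eqref{eq2}, valid because $E_{str}$ is a polynomial in $uv$ alone.

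Next I would handle the right-hand side of \eqref{eq2}, namely $\tfrac{d}{12}c_d^{str}(X)+\tfrac16 c_1(X).c_{d-1}^{str}(X)$. For the first term, Corollary~\ref{cortotalchern} gives $c_d^{str}(X)=\sum_{\sigma\in\Sigma(d)}v(\sigma)[X_\sigma]$; since $[X_\sigma]$ is a point (degree $1$) for each maximal cone $\sigma$, this yields $c_d^{str}(X)=\sum_{\sigma\in\Sigma(d)}v(\sigma)=v(\Sigma)$, matching the first summand on the right. For the second term I would invoke the intersection-number formula: $c_1(X)=[-K_X]$ where $-K_X=\sum_{\rho\in\Sigma(1)}D_\rho$ is a torus-invariant $\Q$-Cartier divisor, so by Proposition~\ref{C1Cn-1} we get $c_1(X).c_{d-1}^{str}(X)=[-K_X].c_{d-1}^{str}(X)=\sum_{\sigma\in\Sigma(d-1)}v(\sigma)\cdot l_{-K_X}(\sigma)$. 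This gives the general formula. For the semiample case, one instead applies Theorem~\ref{ZkCn-k} with $k=1$ and $D=-K_X$ to rewrite $[-K_X].c_{d-1}^{str}(X)=\sum_{\sigma\in\Sigma(d-1)}v(\sigma)\cdot v(\Delta_{-K_X}^\sigma)$, the anticanonical polytope $\Delta_{-K_X}$ here being $\{y\mid\langle y,u_\rho\rangle\ge-1\ \forall\rho\}$, i.e. essentially $\Delta^*$ up to reflection.

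Putting the two sides together gives precisely the stated equivalence; the point is that the stringy Libgober–Wood identity \eqref{eq2} holds abstractly by \cite[Corollary 3.10]{Bat00}, and all we do is substitute the toric combinatorial expressions for $E_{str}$, $c_d^{str}(X)$, and $c_1(X).c_{d-1}^{str}(X)$. One subtlety to address is that \eqref{eq2} is stated for $X$ with at worst canonical Gorenstein singularities (so that stringy Hodge numbers are genuinely defined), whereas here $X$ is only $\Q$-Gorenstein; the remedy is to use the version \eqref{eq1.2} of the identity — valid for all log-terminal $X$ — together with the polynomiality of $E_{str}(X;u,v)$ in $uv$ (Proposition~\ref{theo4.3new}), which lets us carry out the same derivative computation with $\psi_\alpha(\Sigma)$ in place of $\pm h^{p,q}_{str}$ even when $\alpha$ is not an integer. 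I expect this reconciliation — making the derivative-of-$E_{str}$ computation work cleanly with rational exponents $\alpha\in\tfrac1{q_X}\Z$ and matching it to the constant $\tfrac{3d^2-5d}{12}$ appearing in \eqref{eq1.2} — to be the only genuinely delicate step; the rest is direct substitution from the results of Sections~\ref{section3} and~\ref{section4}.
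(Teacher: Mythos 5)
Your proposal is correct and follows essentially the same route as the paper: expand $E_{str}(X;u,1)=\sum_\alpha\psi_\alpha(\Sigma)u^\alpha$ via Proposition~\ref{theo4.3new}, compute $\frac{d^2}{du^2}E_{str}(X;u,1)\big\vert_{u=1}=\sum_\alpha\alpha(\alpha-1)\psi_\alpha(\Sigma)$, feed this into \eqref{eq1.2} (not \eqref{eq2}, exactly the point you flag), convert to $\sum_\alpha\psi_\alpha(\Sigma)(\alpha-\tfrac d2)^2$ using $\sum_\alpha\alpha\psi_\alpha(\Sigma)=\tfrac d2 c_d^{str}(X)$ and $\sum_\alpha\psi_\alpha(\Sigma)=c_d^{str}(X)$, and then substitute Corollary~\ref{cortotalchern}, Proposition~\ref{C1Cn-1}, and Theorem~\ref{ZkCn-k} on the right-hand side. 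The only cosmetic difference is that you derive the first-moment identity from the symmetry $\psi_\alpha=\psi_{d-\alpha}$ where the paper cites \cite[Proposition 3.4]{Bat00}; these are equivalent.
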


\begin{proof}
Using the equality  $E_{str}\left(X;u,v\right) = 
\sum_{\alpha} \psi_{\alpha}\left(\Sigma\right)  (uv)^{\alpha}$
from Proposition \ref{theo4.3new},
we obtain 
\begin{align*}
\frac{d^2}{d u^2} E_{str}\left(X;u,1\right) \vert_{u=1}
=  \sum _{\alpha} \alpha \cdot (\alpha-1) \psi_{\alpha}\left(\Sigma\right),
\end{align*}
i.e., the stringy Libgober-Wood identity \eqref{eq1.2} is given as
\begin{align*}
\sum_{\alpha} \left(\alpha^2 - \alpha \right) \psi_{\alpha}\left(\Sigma\right)
= \frac{3d^2-5d}{12} c_d^{str}(X) + \frac{1}{6} c_1(X).c_{d-1}^{str}(X) .
\end{align*}
Applying $ \sum_{\alpha} \alpha \psi_{\alpha}  \left(\Sigma\right) 
= \frac{d}{d u}  E_{str}\left(X;u,1\right) \vert_{u=1} = \frac{d}{2}c_d^{str}(X)$ 
\cite[Proposition 3.4]{Bat00} 
a short calculation yields 
\begin{align*}
\sum_{\alpha} \alpha^2 \psi_{\alpha} \left(\Sigma\right)
&=  \big( \frac{d}{12} + \frac{d^2}{4}\big) c_d^{str}(X) + \frac{1}{6} c_1(X).c_{d-1}^{str}(X)
\end{align*}
and implies
\begin{align*}
\sum_{\alpha} \psi_{\alpha}\left(\Sigma\right) \left(\alpha - \frac{d}{2} \right)^2 
&=   \sum_{\alpha} \alpha^2 \psi_{\alpha} \left(\Sigma\right)
- d \sum_{\alpha} \alpha \psi_{\alpha}\left(\Sigma\right) 
+ \frac{d^2}{4} \sum_{\alpha}  \psi_{\alpha}\left(\Sigma\right) \\
&=  \frac{d}{12} c_d^{str}(X)+ \frac{1}{6} c_1(X).c_{d-1}^{str}(X)
\end{align*}
because $ \sum_{\alpha}  \psi_{\alpha} \left(\Sigma\right)= 
 E_{str}\left(X;u,1\right) \vert_{u=1} = c_d^{str}(X)$ 
\cite[Definition 2.1]{Bat00}. 
To finish, it remains to note that 
$c_d^{str}(X) = v(\Sigma)$ by Corollary \ref{cortotalchern} 
(cf. \cite[Proposition 4.10]{Bat98}),
\begin{align*}
c_1(X).c_{d-1}^{str}(X) 
= \left[-K_X\right].c_{d-1}^{str}(X)
= \sum_{\sigma \in \Sigma(d-1)}v(\sigma) 
\cdot l_{-K_X}(\sigma)
\end{align*}
by Proposition \ref{C1Cn-1}, and 
\begin{align*}
c_1(X).c_{d-1}^{str}(X) 
= \left[-K_X\right].c_{d-1}^{str}(X)
= \sum_{\sigma \in \Sigma(d-1)}v(\sigma) 
\cdot v\left(\Delta_{-K_X}^{\sigma} \right) 
\end{align*}
by Theorem \ref{ZkCn-k}
if $-K_X$ is semiample.
\end{proof}

We formulate one more combinatorial version of the stringy
Libgober-Wood identity containing only intrinsic informations coming from
the associated fan $\Sigma$ of the toric variety $X$. To achieve this, 
we describe the left side of the 
stringy Libgober-Wood identity \eqref{eq1.2} in pure combinatorial terms using Proposition 
\ref{theo4.3new}.

\begin{theo} \label{theo3.8new}
Let $X$ be a $d$-dimensional projective $\Q$-Gorenstein
toric variety associated with a fan $\Sigma$ in $N_{\R}$
and $-K_X = \sum_{\rho \in \Sigma(1)} D_{\rho}$ the anticanonical 
torus-invariant $\Q$-Cartier divisor on $X$.
Then the stringy
Libgober-Wood identity 
is equivalent to
\begin{align*} 
2 &\cdot  \sum_{\sigma \in \Sigma'(d-2)} 
\left\vert  \square_{\sigma}^{\circ} \cap N \right\vert 
+2  \cdot  \sum_{\sigma \in \Sigma'(d-1)}  \sum_{n' \in \square_{\sigma}^{\circ} \cap N}  
\left( d+\kappa(n') -1 \right) \\
&+  \! \!  \sum_{\sigma \in \Sigma'(d)}  \sum_{n' \in \square_{\sigma}^{\circ} \cap N}  \! \! \! \! 
\left( d+\kappa(n') \right) \left( d+\kappa(n')-1 \right) 
= \frac{(3d^2-5d)}{12}    v\left(\Sigma\right) + 
\frac{1}{6}    c_1(X).c_{d-1}^{str}(X),
\end{align*}
where $\Sigma'$ is a simplicial
subdivision of the fan $\Sigma$ such that $\Sigma'(1) = \Sigma(1)$,
$\square_{\sigma}^{\circ}$ is the relative interior of the parallelepiped
$\square_{\sigma}$
spanned by the primitive lattice
generators of a cone $\sigma \in \Sigma$, $\kappa$ is the 
$\Sigma$-piecewise linear function corresponding to $-K_X$, and
$v(\Sigma) = \sum_{\sigma \in \Sigma(d)} v(\sigma)$. The rational
number $c_1(X).c_{d-1}^{str}(X) \in \Q$ is computable as
\begin{align*}
c_1(X).c_{d-1}^{str}(X)=\sum_{\sigma \in \Sigma(d-1)} v(\sigma) \cdot 
l_{-K_X}(\sigma),
\end{align*}
where $l_{-K_X}$ is the intersection number 
$[-K_X] .[X_\sigma]$ 
(cf. Proposition \ref{C1Cn-1}).
If $-K_X$ is semiample, then
\begin{align*} 
 c_1(X).c_{d-1}^{str}(X)= \sum_{\sigma \in \Sigma(d-1)} v(\sigma) \cdot 
 v\left(\Delta_{-K_X}^{\sigma}\right),
\end{align*}
where 
$\Delta_{-K_X}^{\sigma}$ is a face of the rational polytope $\Delta_{-K_X}$ 
(cf. Theorem \ref{ZkCn-k}).
\end{theo}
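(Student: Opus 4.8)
The plan is to combine two ingredients already in place: the polynomial expansion of $E_{str}(X;u,v)$ from Proposition~\ref{theo4.3new} applied to a fixed simplicial subdivision $\Sigma'$ of $\Sigma$ with $\Sigma'(1)=\Sigma(1)$, and the intersection-theoretic evaluation of $c_1(X).c_{d-1}^{str}(X)$ from Proposition~\ref{C1Cn-1} (respectively Theorem~\ref{ZkCn-k} in the semiample case). The formulas for $c_1(X).c_{d-1}^{str}(X)$ at the end of the statement are immediate from those earlier results together with $c_1(X)=[-K_X]$, so the real content is rewriting the left-hand side of the stringy Libgober-Wood identity \eqref{eq1.2}, namely $\frac{d^2}{du^2}E_{str}(X;u,1)\vert_{u=1}$, in terms of the combinatorial data $\square_\sigma^\circ$ and $\kappa$.

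First I would write, by Proposition~\ref{theo4.3new},
\begin{align*}
E_{str}(X;u,v)=\sum_{\sigma\in\Sigma'}(uv-1)^{d-\dim(\sigma)}\sum_{n'\in\square_\sigma^\circ\cap N}(uv)^{\dim(\sigma)+\kappa(n')},
\end{align*}
set $v=1$, and differentiate twice in $u$ at $u=1$. The only terms in the outer sum that survive the two differentiations at $u=1$ are those with $d-\dim(\sigma)\le 2$, i.e.\ cones $\sigma\in\Sigma'(d)$, $\Sigma'(d-1)$, $\Sigma'(d-2)$; every cone of smaller dimension contributes a factor $(u-1)^{d-\dim(\sigma)}$ with exponent $\ge 3$, which vanishes to order $\ge 3$ at $u=1$ and hence dies after two derivatives. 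For $\sigma\in\Sigma'(d)$ the factor is just $(uv)^{d+\kappa(n')}$ and $\frac{d^2}{du^2}u^{a}\vert_{u=1}=a(a-1)$, giving the last line of the claimed left-hand side with $a=d+\kappa(n')$. For $\sigma\in\Sigma'(d-1)$ the relevant term is $(u-1)\cdot u^{(d-1)+\kappa(n')}$; applying the Leibniz rule, $\frac{d^2}{du^2}\big((u-1)u^{b}\big)\vert_{u=1}=2\,\frac{d}{du}u^{b}\vert_{u=1}=2b$ with $b=(d-1)+\kappa(n')=d+\kappa(n')-1$, producing the middle sum with its factor $2$. For $\sigma\in\Sigma'(d-2)$ the term is $(u-1)^2\cdot u^{(d-2)+\kappa(n')}$, and $\frac{d^2}{du^2}\big((u-1)^2 u^{c}\big)\vert_{u=1}=2$, independent of $c$, which gives $2\,|\square_\sigma^\circ\cap N|$ summed over $\sigma\in\Sigma'(d-2)$, matching the first sum. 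Assembling these three contributions reproduces the left-hand side of the asserted identity, and equating with the right-hand side of \eqref{eq1.2}—after substituting $c_d^{str}(X)=v(\Sigma)$ (Corollary~\ref{cortotalchern}, cf.\ \cite[Proposition~4.10]{Bat98}) for the $\frac{3d^2-5d}{12}c_d^{str}(X)$ term—yields exactly the displayed equation.

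The remaining two displayed formulas for $c_1(X).c_{d-1}^{str}(X)$ follow by invoking $c_1(X)=[-K_X]$ and then Proposition~\ref{C1Cn-1} for the general $\Q$-Cartier case, and Theorem~\ref{ZkCn-k} with $k=1$ and $D=-K_X$ for the semiample case; no new work is needed there. I expect the main technical nuisance—not a deep obstacle—to be the bookkeeping in the differentiation step: being careful that the cross terms in the Leibniz expansion for the $(d-1)$- and $(d-2)$-dimensional cones are exactly the ones that survive, and correctly tracking the exponents $\dim(\sigma)+\kappa(n')$ through the shift by $(d-\dim(\sigma))$. One should also note that the identity \eqref{eq1.2} holds by \cite[Theorem~3.8]{Bat00} since a $\Q$-Gorenstein toric variety has at worst log-terminal singularities, so no extra hypothesis is required; the finiteness of all sums is guaranteed by Proposition~\ref{theo4.3new}.
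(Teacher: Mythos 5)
Your proposal is correct and follows essentially the same route as the paper: expand $E_{str}(X;u,1)$ via Proposition \ref{theo4.3new} over the simplicial subdivision $\Sigma'$, differentiate twice at $u=1$ so that only cones of dimension $d$, $d-1$, $d-2$ survive (with the three contributions $a(a-1)$, $2b$, and $2$ exactly as you compute), and then substitute $c_d^{str}(X)=v(\Sigma)$ together with Proposition \ref{C1Cn-1} and Theorem \ref{ZkCn-k} on the right-hand side of \eqref{eq1.2}. No gaps.
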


\begin{proof}
We derivate one summand of 
the stringy $E$-function 
\begin{align*} 
E_{str}\left(X;u,v\right) = \sum_{\sigma \in \Sigma'} (uv-1)^{d-\dim(\sigma)} 
\sum_{n' \in \square_{\sigma}^{\circ} \cap N} (uv)^{\dim(\sigma)+\kappa (n')}.
\end{align*}
from Proposition \ref{theo4.3new}
inserted $v=1$ twice and get
\begin{align*} 
\frac{d^2}{d u^2}  (u-1)^{d-s} 
\sum_{n' \in \square_{\sigma}^{\circ} \cap N} & u^{s+\kappa (n')} 
= (d-s)(d-1-s) (u-1)^{d-2-s} \sum_{n' \in \square_{\sigma}^{\circ} \cap N} u^{s+\kappa(n')}\\
&+ 2 \cdot (d-s)(u-1)^{d-1-s} \sum_{n' \in \square_{\sigma}^{\circ} \cap N} (s+\kappa(n')) 
u^{s+\kappa(n')-1}\\
&+ (u-1)^{d-s} \sum_{n' \in \square_{\sigma}^{\circ} \cap N} 
(s+\kappa(n'))(s+\kappa(n')-1) u^{s+\kappa(n')-2},
\end{align*}
where $\sigma$ is any $s$-dimensional cone of $\Sigma'(s)$. 
Inserting $u=1$ the relevant cones of $\Sigma'$ are
these of dimension $d$, $d-1$, and $d-2$, i.e.,
\begin{align*} 
\frac{d^2}{d u^2} E_{str}\left(X;u,1\right)\Big\vert_{u=1} 
&= 2 \cdot \! \!  \sum_{\sigma \in \Sigma'(d-2)}  \! \! 
\left\vert  \square_{\sigma}^{\circ} \cap N \right\vert 
+ 2   \cdot \! \! \sum_{\sigma \in \Sigma'(d-1)}  \sum_{n' \in \square_{\sigma}^{\circ} \cap N}  
\left( d+ \kappa(n')-1 \right) \\
&+ \sum_{\sigma \in \Sigma'(d)}  \sum_{n' \in \square_{\sigma}^{\circ} \cap N}  
\left( d+ \kappa(n') \right) \left( d+ \kappa(n')-1 \right).
\end{align*}
By Equation \eqref{eq1.2}, we obtain the equality
\begin{align*} 
2 &\cdot  \sum_{\sigma \in \Sigma'(d-2)} 
\left\vert  \square_{\sigma}^{\circ} \cap N \right\vert 
+2  \cdot  \sum_{\sigma \in \Sigma'(d-1)}  \sum_{n' \in \square_{\sigma}^{\circ} \cap N}  
\left( d+\kappa(n') -1 \right) \\
&+  \! \!  \sum_{\sigma \in \Sigma'(d)}  \sum_{n' \in \square_{\sigma}^{\circ} \cap N}  \! \! \! \! 
\left( d+\kappa(n') \right) \left( d+\kappa(n')-1 \right) 
= \frac{(3d^2-5d)}{12}    v\left(\Sigma\right) + 
\frac{1}{6}    c_1(X).c_{d-1}^{str}(X)
\end{align*}
because $c_d^{str}(X) = v(\Sigma)$ by Corollary \ref{cortotalchern} 
(cf. \cite[Proposition 4.10]{Bat98}). 
Furthermore,
\begin{align*}
c_1(X).c_{d-1}^{str}(X) 
= \left[-K_X\right].c_{d-1}^{str}(X)
= \sum_{\sigma \in \Sigma(d-1)}v(\sigma) 
\cdot l_{-K_X}(\sigma)
\end{align*}
by Proposition \ref{C1Cn-1} and 
\begin{align*}
c_1(X).c_{d-1}^{str}(X) 
= \left[-K_X\right].c_{d-1}^{str}(X)
= \sum_{\sigma \in \Sigma(d-1)}v(\sigma) 
\cdot v\left(\Delta_{-K_X}^{\sigma} \right) 
\end{align*}
by Theorem \ref{ZkCn-k}
if $-K_X$ is semiample.
\end{proof}

Recall that a normal projective surface  
is called {\em log del Pezzo surface} if it has 
at worst log-terminal singularities 
and if its anticanonical divisor is an ample $\Q$-Cartier divisor. 
Toric log del Pezzo surfaces one-to-one correspond to   
convex lattice polygons $\Delta \subseteq N_\R$ containing  
the origin in its interior such that  the vertices of $\Delta$ 
are primitive lattice points in $N$. These polygons $
\Delta$  are  called {\em LDP-polygons} \cite{KKN10}. 
The fan $\Sigma$ defining a {toric log del Pezzo surface} 
$X$ consists of cones over
faces of $\Delta$. In particular,  any LDP-polygon $\Delta$ 
is the convex hull of all primitive lattice generators of $1$-dimensional 
cones of $\Sigma(1)$. We remark that in general the vertices of the dual 
polygon $\Delta^* \subseteq M_\R$ are not lattice points in $M$. 

We propose  a new combinatorial identity  that
is equivalent to the stringy Libgober-Wood identity \eqref{eq1.2} and  
relates the number $12$ to LDP-polygons $\Delta$:

\begin{coro} \label{pezzo12} 
Let $X$ be a toric log del Pezzo surface defined by a fan $\Sigma$ in $N_{\R}$
together with the corresponding LDP-polygon $\Delta \subseteq N_{\R}$. Then 
 \begin{align*}
v\left(\Delta \right) +  v \left(\Delta^* \right)= 12 \sum_{n \in \Delta \cap N} \left( \kappa(n) + 1 \right)^2,
\end{align*}
where $\kappa$ the $\Sigma$-piecewise linear function corresponding to the anticanonical divisor of $X$.
In particular, one always has $v\left(\Delta \right) + 
 v \left(\Delta^* \right) \geq 12$ 
and equality holds if and only if 
$\Delta$ is a reflexive polygon. 
\end{coro}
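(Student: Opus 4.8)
The plan is to specialize the toric stringy Libgober--Wood identity of Theorem~\ref{refstrLW} to $d=2$ and then rewrite both sides in terms of $\Delta$ and $\Delta^{*}$. Since $X$ is a toric log del Pezzo surface, $-K_X$ is ample, in particular semiample, so the second formula of Theorem~\ref{refstrLW} applies and reads
\begin{align*}
\sum_{\alpha} \psi_{\alpha}(\Sigma)\,(\alpha-1)^2
= \tfrac{1}{6}\,v(\Sigma) + \tfrac{1}{6}\sum_{\sigma\in\Sigma(1)} v(\sigma)\,v\bigl(\Delta_{-K_X}^{\sigma}\bigr).
\end{align*}
First I would identify the two terms on the right. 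The polygons $\Theta_{\sigma}$ attached to the $2$-dimensional cones tile $\Delta$, so $v(\Sigma)=\sum_{\sigma\in\Sigma(2)}v(\sigma)=2\vol_2(\Delta)=v(\Delta)$. For a ray $\sigma\in\Sigma(1)$ one has $v(\sigma)=1$, its polytope being the primitive segment from the origin to the generator of $\sigma$; moreover $-K_X=\sum_{\rho}D_{\rho}$ gives $\Delta_{-K_X}=\Delta^{*}$, and $\Delta_{-K_X}^{\sigma}$ is the dual face of the corresponding vertex of $\Delta$, i.e.\ an edge of $\Delta^{*}$. Since $0\in\mathrm{int}(\Delta^{*})$, the triangles spanned by the origin and these edges tile $\Delta^{*}$, and an elementary lattice computation shows $v\bigl(\Delta_{-K_X}^{\sigma}\bigr)$ equals twice the area of the corresponding triangle; summing over rays yields $\sum_{\sigma\in\Sigma(1)} v(\sigma)\,v(\Delta_{-K_X}^{\sigma})=2\vol_2(\Delta^{*})=v(\Delta^{*})$. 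Hence $v(\Delta)+v(\Delta^{*})=6\sum_{\alpha}\psi_{\alpha}(\Sigma)(\alpha-1)^2$.

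Next I would compute $\sum_{\alpha}\psi_{\alpha}(\Sigma)(\alpha-1)^2$ from the explicit formula for $E_{str}(X;u,v)$ in Corollary~\ref{2-dime}. With $t=uv$, all monomials sitting at exponent $\alpha=1$ drop out of the weighted sum, the factor $(t-1)^2$ contributes $1$ at $\alpha=0$ and $1$ at $\alpha=2$, and each lattice point $n$ with $-1<\kappa(n)<0$ contributes $(1+\kappa(n))^2+(1+\kappa(n))^2=2(1+\kappa(n))^2$ through its pair of monomials $t^{2+\kappa(n)}$ and $t^{-\kappa(n)}$. To match this with the desired right-hand side I would use that $\kappa$ is homogeneous of degree one with $\kappa(0)=0$ and $\partial\Delta=\{\kappa=-1\}$: every boundary lattice point of $\Delta$ has $\kappa=-1$, the origin has $\kappa=0$, and a nonzero lattice point of $\Delta$ lies in $\mathrm{int}(\Delta)$ precisely when $-1<\kappa(n)<0$. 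Therefore $\sum_{-1<\kappa(n)<0}(1+\kappa(n))^2=\sum_{n\in\Delta\cap N}(\kappa(n)+1)^2-1$, whence $\sum_{\alpha}\psi_{\alpha}(\Sigma)(\alpha-1)^2=2\sum_{n\in\Delta\cap N}(\kappa(n)+1)^2$; combining with the previous display proves
\begin{align*}
v(\Delta)+v(\Delta^{*})=12\sum_{n\in\Delta\cap N}(\kappa(n)+1)^2.
\end{align*}
Every step here is an equivalence, so this identity is equivalent to \eqref{eq1.2}.

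For the final assertion, the summand at $n=0$ equals $1$ and all summands are nonnegative, so $v(\Delta)+v(\Delta^{*})\ge 12$, with equality if and only if $\kappa(n)=-1$ for every nonzero $n\in\Delta\cap N$, that is, if and only if the origin is the unique interior lattice point of $\Delta$. To finish I would invoke the classical two-dimensional fact that a lattice polygon whose only interior lattice point is the origin is reflexive --- equivalently, that $\mathrm{int}(\Delta)\cap N=\{0\}$ forces the Gorenstein index $q_X=1$, i.e.\ $-K_X$ to be Cartier.

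I expect the main obstacle to be the two geometric bookkeeping steps $v(\Sigma)=v(\Delta)$ and $\sum_{\sigma\in\Sigma(1)}v(\sigma)\,v(\Delta_{-K_X}^{\sigma})=v(\Delta^{*})$ --- in particular verifying that the normalized one-dimensional volume of a dual edge equals twice the area of the triangle it spans with the origin, using primitivity of the vertices of $\Delta$ --- together with the precise identification of $\{n\in N:-1<\kappa(n)<0\}$ with the nonzero interior lattice points of $\Delta$. The dimension-two characterization of reflexivity by the single-interior-point condition is the one external input that does not extend to higher dimensions and has to be quoted.
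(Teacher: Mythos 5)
Your proposal is correct and follows essentially the same route as the paper: the paper likewise computes $\frac{d^2}{du^2}E_{str}(X;u,1)\big\vert_{u=1}=2\sum_{n\in\Delta\cap N}(\kappa(n)+1)^2$ from Corollary~\ref{2-dime} and equates it via the stringy Libgober--Wood identity with $\frac{1}{6}\left(c_2^{str}(X)+c_1(X)^2\right)=\frac{1}{6}\left(v(\Delta)+v(\Delta^*)\right)$, finishing with the same unique-interior-lattice-point argument. The only cosmetic difference is that you route the right-hand side through Theorem~\ref{refstrLW} and spell out the identification $\sum_{\sigma\in\Sigma(1)}v(\sigma)\,v\bigl(\Delta_{-K_X}^{\sigma}\bigr)=v(\Delta^*)$, which the paper simply asserts as $c_1(X)^2=v(\Delta^*)$.
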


\begin{proof} 
We use the formula for the stringy $E$-function from Corollary \ref{2-dime} and obtain
 \begin{align*} 
E_{str}\left(X;u,1\right) =(u-1)^2 +  
\sum_{n \in N \atop  \kappa(n)=-1} 
 u + \sum_{n \in N \atop -1 < \kappa(n) < 0} 
\left( u^{2+\kappa(n)} + u^{-\kappa(n)} \right).
\end{align*}
Therefore, 
\begin{align*} 
\frac{d^2}{d u^2} E_{str}\left(X;u,1\right)\Big\vert_{u=1} &= 
2 + \! \! \sum_{0 \neq n \in   \Delta^\circ \cap N} \! \! \! \! \left( (2 + \kappa(n) ) 
(1 + \kappa(n)) + 
(- \kappa(n) ) (-\kappa(n)-1) \right) \\
 &= 2 \sum_{ n \in   \Delta^\circ \cap N} ( \kappa(n) +1)^2 =  
2 \sum_{ n \in   \Delta \cap N} ( \kappa(n) +1)^2 ,  
\end{align*} 
where $\Delta^\circ$ denotes the interior of the polygon $\Delta$.  
By Equation \eqref{eq1.2}, we get the equality 
\begin{align*}
2 \sum_{ n \in   \Delta \cap N} ( \kappa(n) +1)^2 = 
\frac{1}{6}c_2^{str}(X) + \frac{1}{6} c_1(X)^2 = 
\frac{1}{6} (v(\Delta) +  v(\Delta^*))
\end{align*}
because $c_2^{str}(X) = v(\Sigma)= v(\Delta)$ and  $c_1(X)^2 = v(\Delta^*)$. 
One has 
\begin{align*}
\sum_{ n \in   \Delta \cap N} ( \kappa(n) +1)^2 \geq 1
\end{align*}
because the origin 
is contained in $\Delta$. Equality holds, if and only if the origin is the unique 
interior lattice point $n$ in $\Delta$. 
\end{proof} 

\section{Applications to reflexive and Gorenstein polytopes} \label{section5}

Let $\Delta \subseteq N_{\R}$ be a $d$-dimensional convex 
lattice polytope that contains the origin in its interior.
Denote  by $\Sigma$ a fan in $N_{\R}$ consisting of 
cones over faces of $\Delta$ that defines a normal projective toric 
variety $X$. 
The polytope  $\Delta$ is called reflexive 
if its dual 
$\Delta^{*}= \left\{y \in M_{\R} \middle\vert 
\left<y,x\right> \geq -1 \; \forall x \in \Delta \right\}$ 
is also a lattice polytope.
If $\Delta$ is reflexive, then the associated variety $X$ 
is a 
Gorenstein toric Fano variety (i.e., $q_X=1$). 

We are interested in a  combinatorial identity
for reflexive polytopes $\Delta$   that
is equivalent to the stringy Libgober-Wood identity \eqref{eq2} 
for   Gorenstein toric Fano varieties.
For this purpose, we observe 
that the generalized stringy Hodge numbers 
$\psi_{\alpha}\left(\Sigma \right)$ in  
\begin{align*}
E_{str}(X;u,v) = \sum_{\alpha \in [0,d] \cap \Z} \psi_{\alpha} \left(\Sigma \right) (uv)^{\alpha}
\end{align*}
are equal to the nonnegative 
integral coefficients 
$\psi_d\left(\Delta \right), \ldots, \psi_0\left(\Delta \right)$
in the numerator of the Ehrhart power series
\begin{align} \label{eps}
P_{\Delta}(t) =\frac{\psi_d\left(\Delta \right) t^d + \ldots+\psi_1\left(\Delta \right)t
+\psi_0\left(\Delta \right)}{(1-t)^{d+1}}. 
\end{align}

\begin{lem} \label{hodgepsi}
Let $\Delta \subseteq N_{\R}$ be a $d$-dimensional reflexive polytope and $X$ the
associated Gorenstein toric Fano variety. Then
\begin{align*}
E_{str}(X;u,v) = \psi_d\left(\Delta \right) \left( uv\right)^d + \ldots+\psi_1\left(\Delta \right) \left( uv\right)+\psi_0\left(\Delta \right),
\end{align*}
i.e., $\psi_{\alpha} \left(\Sigma \right) =\psi_{\alpha}  \left(\Delta \right)$ 
for all ${\alpha}  \in [0,d] \cap \Z$,
where $\psi_{\alpha} \left(\Sigma \right)$ and $\psi_{\alpha}  \left(\Delta \right)$ are given as above.
\end{lem}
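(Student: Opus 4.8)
The plan is to rewrite the combinatorial formula \eqref{theo4.3} for $E_{str}(X;u,v)$ directly in terms of the Ehrhart series of $\Delta$. Since $X$ is projective, the fan $\Sigma$ is complete, so the relative interiors $\sigma^{\circ}$ $(\sigma\in\Sigma)$ partition $N_{\R}$, whence
\[
\sum_{\sigma\in\Sigma}\ \sum_{n\in\sigma^{\circ}\cap N}(uv)^{\kappa(n)}=\sum_{n\in N}(uv)^{\kappa(n)} .
\]
Because $\Delta$ is reflexive we have $q_X=1$, so $\kappa(n)\in\Z$ for every $n\in N$; moreover $\kappa$ is linear on each cone of $\Sigma$ and hence positively homogeneous of degree one. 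Combining this with $\Delta=\{x\in N_{\R}\mid\kappa(x)\ge -1\}$ gives $k\Delta=\{x\in N_{\R}\mid\kappa(x)\ge -k\}$ for every $k\in\Z_{\ge 0}$, so the number of lattice points $n\in N$ with $\kappa(n)=-k$ equals $|k\Delta\cap N|-|(k-1)\Delta\cap N|$, with the convention $|(-1)\Delta\cap N|=0$ (in particular the origin is the unique lattice point with $\kappa=0$).

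Next I would set $t:=(uv)^{-1}$ and compute, as a routine rearrangement of the Ehrhart power series,
\[
\sum_{n\in N}(uv)^{\kappa(n)}=\sum_{k\ge 0}\bigl(|k\Delta\cap N|-|(k-1)\Delta\cap N|\bigr)t^{k}=(1-t)\,P_{\Delta}(t).
\]
Substituting the rational form \eqref{eps} of $P_{\Delta}(t)$ yields $(1-t)P_{\Delta}(t)=\bigl(\sum_{\alpha=0}^{d}\psi_{\alpha}(\Delta)t^{\alpha}\bigr)(1-t)^{-d}$, and since $1-t=(uv-1)/(uv)$ one has $(uv-1)^{d}(1-t)^{-d}=(uv)^{d}$. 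Multiplying by $(uv-1)^{d}$ as prescribed by \eqref{theo4.3} therefore gives
\[
E_{str}(X;u,v)=(uv)^{d}\sum_{\alpha=0}^{d}\psi_{\alpha}(\Delta)(uv)^{-\alpha}=\sum_{\alpha=0}^{d}\psi_{\alpha}(\Delta)(uv)^{d-\alpha}.
\]

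It remains to re-index this as $\sum_{\alpha=0}^{d}\psi_{\alpha}(\Delta)(uv)^{\alpha}$, i.e.\ to know that the vector $(\psi_{0}(\Delta),\dots,\psi_{d}(\Delta))$ is palindromic; this is the point where reflexivity of $\Delta$ enters a second time. Within the present setup the cleanest route is to apply the Poincar\'e duality $E_{str}(X;u,v)=(uv)^{d}E_{str}(X;u^{-1},v^{-1})$ already cited in the proof of Proposition \ref{theo4.3new} to the expression just obtained; this forces $\psi_{\alpha}(\Delta)=\psi_{d-\alpha}(\Delta)$ for all $\alpha$ (equivalently, this is the palindromicity of the $h^{*}$-vector of a reflexive polytope). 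Re-indexing then gives $E_{str}(X;u,v)=\sum_{\alpha=0}^{d}\psi_{\alpha}(\Delta)(uv)^{\alpha}$, and comparing coefficients with the expansion $E_{str}(X;u,v)=\sum_{\alpha\in[0,d]\cap\Z}\psi_{\alpha}(\Sigma)(uv)^{\alpha}$ of Proposition \ref{theo4.3new} proves $\psi_{\alpha}(\Sigma)=\psi_{\alpha}(\Delta)$. I expect the only genuinely delicate bookkeeping to be the identification $\{n\in N\mid\kappa(n)\ge -k\}=k\Delta\cap N$, including the endpoint cases $k=0,-1$; everything past that is formal manipulation of power series.
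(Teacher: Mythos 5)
Your proof is correct and follows essentially the same route as the paper: completeness of the fan turns the sum in \eqref{theo4.3} into $\sum_{n\in N}(uv)^{\kappa(n)}$, the count of lattice points on the level sets of $\kappa$ identifies this with $\left(1-(uv)^{-1}\right)P_{\Delta}\left((uv)^{-1}\right)$, and the rational form \eqref{eps} then yields $\sum_{\alpha}\psi_{\alpha}(\Delta)(uv)^{d-\alpha}$. The only cosmetic difference is the final re-indexing: the paper invokes the palindromicity $\psi_{\alpha}(\Delta)=\psi_{d-\alpha}(\Delta)$ directly from \cite[Theorem 2.11]{Bat93}, whereas you derive it from the Poincar\'e duality of the stringy $E$-function already used in Proposition \ref{theo4.3new}; both are legitimate.
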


\begin{proof}
By Equation \eqref{theo4.3}, we have 
\begin{align*}
E_{str}\left(X;u,v\right)
= (uv-1)^d \sum_{n \in  N} \left( uv \right)^{\kappa (n)} 
= (uv-1)^d \sum_{k \geq 0} \sum_{ n \in N \atop \kappa(n)=-k} (uv)^{-k} ,  
\end{align*}
since the  fan $\Sigma$ defining $X$ is complete. 
We note that the number of lattice 
points $n \in N$ such that $\kappa(n) = -k$ 
equals   $ \left\vert k\Delta \cap N \right\vert - 
 \left\vert (k-1)\Delta \cap N \right\vert$.  
Therefore, we get 
\begin{align*}
E_{str}\left(X;u,v\right)
&= (uv-1)^d\left( 1- (uv)^{-1} \right) 
\sum_{k \geq 0}  \left\vert k\Delta \cap N \right\vert  (uv)^{-k}.
\end{align*}
Using  the definition of $P_{\Delta}\left( (uv)^{-1} \right)$ and 
Equation \eqref{eps},  this implies
\begin{align*}
E_{str}\left(X;u,v\right)
&= (uv)^d \left(1-(uv)^{-1}\right)^{d+1}  
 \cdot P_{\Delta}\left( (uv)^{-1} \right) \\
&=  \psi_d\left(\Delta \right)  + 
\ldots+\psi_1\left(\Delta \right)\left( uv \right)^{d-1}+\psi_0\left(\Delta \right) (uv)^{d}  \\
&=  \psi_d\left(\Delta \right) (uv)^{d} + 
\ldots+\psi_1\left(\Delta \right)\left( uv \right)+\psi_0\left(\Delta \right) 
\end{align*}
because $\psi_\alpha \left(\Delta \right) = 
\psi_{d- \alpha}\left(\Delta \right)$ for all $0 \leq \alpha \leq d$ \cite[Theorem 2.11]{Bat93}.
\end{proof}

Let $\Delta \subseteq N_{\R}$ be a $d$-dimensional convex 
lattice polytope that contains the origin in its interior.
If $\theta$ is a face of $\Delta$, then the face
$\theta^{*} = \{y \in \Delta^{*} \vert \left<y,x\right> = -1 
\; \forall x \in \theta \} \subseteq \Delta^{*}$ 
is the dual face to  $\theta$. 
This establishes a one-to-one order-reversing correspondence 
between faces of $\Delta$ and 
faces of $\Delta^{*}$ such that $\dim(\theta) + \dim \left( \theta^{*} \right) = d -1$. 

\begin{theo} \label{3.10propnew}
Let $\Delta \subseteq N_{\R}$ be a $d$-dimensional 
reflexive polytope. Then the stringy Libgober-Wood identity  
for the Gorenstein toric Fano variety $X$ 
corresponding to $\Delta$ is 
equivalent to 
\begin{align*} 
\sum_{\alpha \in [0,d] \cap \Z} \psi_{\alpha} \left(\Delta \right)  \left(\alpha-\frac{d}{2} \right)^{2}
= \frac{d}{12} v\left(\Delta \right)+ \frac{1}{6} 
\sum_{\theta \preceq \Delta \atop  \dim(\theta)=d-2} v(\theta) \cdot v \left(\theta^{*}\right),
\end{align*}
where 
$\psi_{\alpha} \left( \Delta \right)$ are the coefficients 
in the numerator of the Ehrhart power series $P_{\Delta}(t)$.
\end{theo}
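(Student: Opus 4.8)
The plan is to obtain the asserted identity by specializing the general toric reformulation of the stringy Libgober--Wood identity from Theorem \ref{refstrLW} to the reflexive case and translating its right-hand side from the language of the face fan $\Sigma$ into the language of faces of $\Delta$, while Lemma \ref{hodgepsi} takes care of the left-hand side.

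First I would record that, since $\Delta$ is reflexive, the toric variety $X$ is a Gorenstein toric Fano variety; hence $q_X=1$, so $[0,d]\cap\frac1{q_X}\Z=[0,d]\cap\Z$, and the anticanonical divisor $-K_X=\sum_{\rho\in\Sigma(1)}D_\rho$ is an ample, in particular semiample, Cartier divisor. By the semiample version of Theorem \ref{refstrLW}, the stringy Libgober--Wood identity for $X$ is equivalent to
\[
\sum_{\alpha\in[0,d]\cap\Z}\psi_\alpha(\Sigma)\left(\alpha-\frac d2\right)^2
=\frac d{12}\,v(\Sigma)+\frac16\sum_{\sigma\in\Sigma(d-1)}v(\sigma)\cdot v\!\left(\Delta_{-K_X}^{\sigma}\right).
\]
By Lemma \ref{hodgepsi}, $\psi_\alpha(\Sigma)=\psi_\alpha(\Delta)$ for every $\alpha\in[0,d]\cap\Z$, so the left-hand side is already that of the claimed identity, and it remains only to rewrite $v(\Sigma)$ and the sum over $(d-1)$-dimensional cones.

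Since $\Sigma$ is the fan of cones over faces of $\Delta$, its $d$-dimensional cones are exactly the cones $\sigma_F$ over the facets $F$ of $\Delta$, and for these $\Theta_{\sigma_F}=\conv(\{0\}\cup F)$; as $0$ lies in the interior of $\Delta$, these pyramids cover $\Delta$ with pairwise disjoint interiors, so $v(\Sigma)=\sum_F d!\,\vol_d(\conv(\{0\}\cup F))=d!\,\vol_d(\Delta)=v(\Delta)$. Next, $\theta\mapsto\sigma_\theta$ (the cone over $\theta$) is a bijection from the set of $(d-2)$-dimensional faces of $\Delta$ onto $\Sigma(d-1)$, and since the primitive ray generators of $\sigma_\theta$ are exactly the vertices of $\theta$, one has $v(\sigma_\theta)=v(\theta)$; moreover $\Delta_{-K_X}=\Delta^{*}$, and because $a_\rho=1$ for all $\rho$ the face $\Delta_{-K_X}^{\sigma_\theta}=\{y\in\Delta^{*}\mid\langle y,u_\rho\rangle=-1\ \forall\,\rho\subseteq\sigma_\theta\}$ is precisely the dual face $\theta^{*}=\{y\in\Delta^{*}\mid\langle y,x\rangle=-1\ \forall\,x\in\theta\}$, whose normalized volume with respect to the rank-one sublattice $M(\sigma_\theta)$ is $v(\theta^{*})$. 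Substituting $v(\Sigma)=v(\Delta)$ and $\sum_{\sigma\in\Sigma(d-1)}v(\sigma)\,v(\Delta_{-K_X}^{\sigma})=\sum_{\theta\preceq\Delta,\ \dim\theta=d-2}v(\theta)\,v(\theta^{*})$ into the displayed equation yields the assertion.

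The step I expect to be the main obstacle is this last translation, specifically verifying that the normalization of $v(\theta^{*})$ in the statement (as the normalized volume of a face of the dual reflexive polytope $\Delta^{*}$) agrees with the normalized volume of $\Delta_{-K_X}^{\sigma_\theta}$ with respect to the sublattice $M(\sigma_\theta)$ entering Theorem \ref{ZkCn-k} and Proposition \ref{C1Cn-1}; this is where reflexivity of $\Delta$ (equivalently $q_X=1$, so that $\Delta^{*}$ is a lattice polytope and $\kappa$ is integral) is genuinely used. Everything else is a direct dictionary between cones of $\Sigma$ and faces of $\Delta$, combined with the already established Theorem \ref{refstrLW} and Lemma \ref{hodgepsi}.
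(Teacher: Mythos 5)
Your proposal is correct and follows essentially the same route as the paper: apply Theorem \ref{refstrLW} (with $-K_X$ ample, hence semiample), identify $\psi_\alpha(\Sigma)=\psi_\alpha(\Delta)$ via Lemma \ref{hodgepsi}, and translate $v(\Sigma)=v(\Delta)$ and the sum over $\Sigma(d-1)$ into the sum over $(d-2)$-faces using $\Delta_{-K_X}=\Delta^*$ and $\Delta_{-K_X}^{\sigma_\theta}=\theta^*$. The only point worth tightening is that $v(\sigma_\theta)=v(\theta)$ does not follow merely from the ray generators being the vertices of $\theta$; it also uses that $\theta$ lies at lattice distance $1$ from the origin (as the paper notes, every facet of a reflexive polytope has lattice distance $1$), which is the same reflexivity input you flag at the end.
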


\begin{proof}
Using Lemma \ref{hodgepsi} and Theorem \ref{refstrLW}, we obtain
\begin{align*} 
\sum_{\alpha \in [0,d] \cap \Z}  \psi_{\alpha}\left(\Delta \right)   \left(\alpha-\frac{d}{2} \right)^{2} 
&=\frac{d}{12} v \left( \Sigma\right)+ \frac{1}{6} \sum_{\sigma \in \Sigma(d-1)}v(\sigma) 
\cdot v\left(\Delta_{-K_X}^{\sigma} \right),
\end{align*} 
since the anticanonical divisor
$-K_X$ is  ample. 
It remains to use  
$v \left( \Sigma \right)= v\left( \Delta \right)$ 
and 
\begin{align*}
\sum_{\sigma \in \Sigma(d-1)}v(\sigma) 
\cdot v\left(\Delta_{-K_X}^{\sigma} \right) 
=\sum_{\theta \preceq \Delta, \atop  \dim\left(\theta \right)=d-2}
v \left(\theta \right) \cdot 
v \left(\theta^{*}\right), 
\end{align*}
since  $\Delta_{-K_X} = \Delta^*$, $\sigma$ is a cone 
over a face $\theta$ of $\Delta$, 
$\Delta_{-K_X}^{\sigma} = \theta^*$, and 
every facet of $\Delta$ has lattice distance $1$ to the origin.
\end{proof}

The well-known identities for reflexive polytopes of
dimension $2$ and $3$ follow from the above statement:

\begin{coro} \label{212}
Let $\Delta \subseteq N_{\R}$ be a $2$-dimensional reflexive polytope. Then the stringy Libgober-Wood identity is equivalent to 
\begin{align*} 
 v(\Delta) + v\left( \Delta^{*} \right)=12.
\end{align*}
\end{coro}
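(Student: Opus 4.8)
The plan is to specialize Theorem \ref{3.10propnew} to the case $d=2$ and simplify both sides of the identity into the form $v(\Delta) + v(\Delta^*) = 12$. This should be essentially a bookkeeping exercise, since all the substantive work has been done in the general statement.

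First I would write out the left-hand side $\sum_{\alpha \in [0,2] \cap \Z} \psi_{\alpha}(\Delta)(\alpha - 1)^2$. The only nonzero contributions come from $\alpha = 0$ and $\alpha = 2$ (the term $\alpha = 1$ is killed by the factor $(\alpha-1)^2$), and by Proposition \ref{theo4.3new} (or directly from the known facts about Ehrhart series of reflexive polytopes) we have $\psi_0(\Delta) = \psi_2(\Delta) = 1$. Hence the left-hand side equals $1 \cdot 1 + 1 \cdot 1 = 2$.

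Next I would simplify the right-hand side $\frac{d}{12}v(\Delta) + \frac{1}{6}\sum_{\theta \preceq \Delta,\ \dim(\theta) = d-2} v(\theta) \cdot v(\theta^*)$ for $d = 2$. The sum now runs over faces $\theta$ of dimension $0$, i.e., vertices of $\Delta$. For a vertex $\theta$, the polytope $\Theta_\theta$ is just the segment from the origin to $\theta$ (whose primitive generator is $\theta$ itself, as vertices of a reflexive polytope are primitive), so $v(\theta) = 1$. The dual face $\theta^*$ is a facet (edge) of $\Delta^*$, and summing $v(\theta^*)$ over all vertices $\theta$ of $\Delta$ — equivalently over all edges of $\Delta^*$ — gives precisely $\sum_{\text{edges } e \text{ of } \Delta^*} v(e)$, which by the definition of $v(\cdot)$ as normalized boundary volume equals $v(\Delta^*)$ (the normalized area, since in dimension $2$ the normalized perimeter of a polygon containing the origin equals twice its normalized area — more carefully, $v(\Delta^*) = 2!\cdot\vol_2(\Delta^*) = \sum_e v(e)$ by decomposing $\Delta^*$ into triangles with apex at the origin). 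Thus the right-hand side becomes $\frac{1}{6}v(\Delta) + \frac{1}{6}v(\Delta^*)$.

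Equating the two sides gives $2 = \frac{1}{6}(v(\Delta) + v(\Delta^*))$, i.e., $v(\Delta) + v(\Delta^*) = 12$. The main point requiring a little care is the identification $\sum_{\theta} v(\theta)\cdot v(\theta^*) = v(\Delta^*)$: one must check that $v(\theta) = 1$ for every vertex (using primitivity of vertices of a reflexive polygon) and that the normalized edge-volumes of $\Delta^*$ sum to the normalized area $v(\Delta^*)$, which follows from the triangulation of $\Delta^*$ by cones over its edges from the origin together with the fact that each such cone over an edge $e$ has normalized volume $v(e)$. Everything else is immediate substitution into Theorem \ref{3.10propnew}.
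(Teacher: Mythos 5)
Your proof is correct and follows essentially the same route as the paper: specialize Theorem \ref{3.10propnew} to $d=2$, use $\psi_0(\Delta)=\psi_2(\Delta)=1$ to evaluate the left-hand side as $2$, and identify $\sum_{\dim(\theta)=0} v(\theta)\cdot v(\theta^{*})$ with $v(\Delta^{*})$ via the triangulation of $\Delta^{*}$ by triangles with apex at the origin. The one point you should state explicitly is that every edge of $\Delta^{*}$ lies at lattice distance $1$ from the origin (i.e., $\Delta^{*}$ is itself reflexive, because $\Delta$ is), since that is exactly what makes each triangle over an edge $e$ have normalized area $v(e)$ --- your passing claim that the normalized perimeter equals twice the normalized area for an arbitrary polygon containing the origin is not true without this hypothesis.
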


\begin{proof}
Using Theorem \ref{3.10propnew}, we get
\begin{align*} 
\sum_{\alpha \in [0,2] \cap \Z} \psi_{\alpha} \left(\Delta \right)  \left(\alpha-1 \right)^{2}
= \frac{1}{6} v\left( \Delta \right) + \frac{1}{6} \sum_{\theta \preceq \Delta 
\atop  \dim(\theta)=0} 
v(\theta) \cdot v \left(\theta^{*}\right) .
\end{align*}
Moreover, 
$\sum_{\alpha \in [0,2] \cap \Z} \psi_\alpha\left(\Delta \right)   \left(\alpha -1 \right)^{2}
= 2$
because 
$\psi_0\left(\Delta \right)= \psi_2\left(\Delta \right)=1$.
It remains to apply the equalities 
\begin{align*}
\sum_{\theta \preceq \Delta \atop  \dim(\theta)=0}
 v(\theta) \cdot v \left(\theta^{*}\right)  = 
\sum_{\theta \preceq \Delta \atop  \dim(\theta)=0}
 v \left(\theta^{*}\right) 
=  v\left( \Delta^{*} \right)  
\end{align*}
that hold because $\Delta^*$ is reflexive and 
$v\left(\theta \right) =1$ if  $\dim(\theta)=0$.
\end{proof}

\begin{coro} \label{324}
Let $\Delta \subseteq N_{\R}$ be a $3$-dimensional reflexive polytope. Then  the stringy Libgober-Wood identity is equivalent to 
\begin{align*} 
\sum_{\theta \preceq \Delta \atop  \dim(\theta)=1} 
v\left(\theta\right) \cdot v\left(\theta^{*}\right)=24.
\end{align*}
\end{coro}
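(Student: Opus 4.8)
The plan is to derive this as a direct specialization of Theorem \ref{3.10propnew} to the case $d = 3$, exactly mirroring the structure of the proof of Corollary \ref{212}. First I would write out the stringy Libgober-Wood identity from Theorem \ref{3.10propnew} with $d = 3$, which reads
\begin{align*}
\sum_{\alpha \in [0,3] \cap \Z} \psi_{\alpha}\left(\Delta\right)\left(\alpha - \tfrac{3}{2}\right)^2
= \frac{3}{12} v\left(\Delta\right) + \frac{1}{6} \sum_{\theta \preceq \Delta \atop \dim(\theta)=1} v(\theta) \cdot v\left(\theta^*\right).
\end{align*}
The next step is to evaluate the left-hand side purely combinatorially. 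Since $\Delta$ is reflexive, Lemma \ref{hodgepsi} (or the symmetry $\psi_\alpha(\Delta) = \psi_{d-\alpha}(\Delta)$ from \cite[Theorem 2.11]{Bat93}) gives $\psi_0(\Delta) = \psi_3(\Delta) = 1$ and $\psi_1(\Delta) = \psi_2(\Delta)$. With the weights $(\alpha - 3/2)^2$ taking value $9/4$ at $\alpha \in \{0,3\}$ and value $1/4$ at $\alpha \in \{1,2\}$, the left-hand side becomes $2 \cdot \tfrac{9}{4} + 2\psi_1(\Delta) \cdot \tfrac{1}{4} = \tfrac{9}{2} + \tfrac{1}{2}\psi_1(\Delta)$.

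The one genuinely new input compared with the two-dimensional case is an expression for $\psi_1(\Delta)$ and for $v(\Delta)$ in terms of the same boundary data appearing on the right-hand side. Here I would use that for a reflexive polytope $\psi_1(\Delta) = |\Delta \cap N| - (d+1) = |\Delta \cap N| - 4$, together with $v(\Delta) = 3! \cdot \vol_3(\Delta)$ and the standard relation expressing this normalized volume via the Ehrhart data; concretely, from the Ehrhart series one has $\psi_0 + \psi_1 + \psi_2 + \psi_3 = d! \vol_d(\Delta) = v(\Delta)$, so $v(\Delta) = 2 + 2\psi_1(\Delta)$. Substituting and clearing the factor $\tfrac{1}{6}$, the identity collapses to
\begin{align*}
\frac{9}{2} + \frac{1}{2}\psi_1(\Delta) = \frac{1}{4}\bigl(2 + 2\psi_1(\Delta)\bigr) + \frac{1}{6}\sum_{\theta \preceq \Delta \atop \dim(\theta)=1} v(\theta) \cdot v(\theta^*),
\end{align*}
and a short arithmetic simplification — the $\psi_1(\Delta)$ terms cancel — leaves precisely $\sum_{\dim\theta = 1} v(\theta) v(\theta^*) = 24$.

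The main obstacle I anticipate is not conceptual but bookkeeping: one must be careful that $v(\Delta)$ as defined in Theorem \ref{torvol} (normalized volume $d! \vol_d$ with respect to $N$) really equals $\sum_\alpha \psi_\alpha(\Delta)$, and that $\psi_1(\Delta) = |\Delta \cap N| - 4$ holds with the normalization conventions used throughout the paper; both are classical facts about Ehrhart polynomials of lattice polytopes (equivalently $h^*$-vectors), but the constants must be tracked exactly since the whole point is that they cancel to produce the clean constant $24$. An alternative, even shorter route — which I would mention as a remark — is to invoke the general principle from Section \ref{section4} that for a Gorenstein toric Fano variety the stringy Euler number $e_{str}(Z) = c_{d-1}^{str}(Z)$ of a generic Calabi-Yau hypersurface equals $\sum_{k=0}^{d-3}(-1)^k \sum_{\dim\theta = k+1} v(\theta) v(\theta^*)$ (the formula stated in the introduction), which for $d = 3$ has only the $k = 0$ term $\sum_{\dim\theta=1} v(\theta) v(\theta^*)$, and to note that a K3 surface has Euler number $24$; equating the two gives the claim immediately.
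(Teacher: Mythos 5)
Your proposal is correct and follows essentially the same route as the paper: specialize Theorem \ref{3.10propnew} to $d=3$, compute the left-hand side as $\tfrac{9}{2}+\tfrac{1}{2}\psi_1(\Delta)$ using $\psi_0=\psi_3=1$ and $\psi_1=\psi_2$, and then cancel the $\psi_1$-terms via the identity $v(\Delta)=\sum_\alpha\psi_\alpha(\Delta)=2+2\psi_1(\Delta)$, which is exactly the relation the paper invokes (there phrased as $v(\Delta)=v(\Sigma)=e_{str}(X)=\sum_\alpha\psi_\alpha(\Delta)$). Your alternative remark via the Euler number $24$ of a K3 surface is also sound and matches the discussion in the paper's introduction, but the main argument is the paper's own.
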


\begin{proof}
Theorem \ref{3.10propnew} implies
\begin{align*} 
\sum_{\alpha \in [0,3] \cap \Z} \psi_{\alpha}\left(\Delta \right)  \left(\alpha-\frac{3}{2} \right)^{2}
= \frac{1}{4} v\left( \Delta \right)+ \frac{1}{6}\sum_{\theta \preceq \Delta \atop  \dim(\theta)=1} 
v(\theta) \cdot v \left(\theta^{*}\right).
\end{align*}
The coefficients $\psi_{\alpha}\left(\Delta \right)$ in the numerator of the Ehrhart power series $P_{\Delta}(t)$ are 
$\psi_0\left(\Delta \right)=\psi_3\left(\Delta \right)=1$ and
$\psi_1\left(\Delta \right) = \psi_2\left(\Delta \right)= \left\vert  \Delta \cap N \right\vert -4$ 
\cite[Theorem 2.11]{Bat93},
i.e., 
\begin{align*} 
\sum_{\alpha \in [0,3] \cap \Z} \psi_{\alpha}\left(\Delta \right) \left(\alpha-\frac{3}{2} \right)^{2}= 
\frac{9}{2}  +\frac{1}{2} \psi_1\left(\Delta \right)
\end{align*}
and we conclude 
\begin{align*} 
\sum_{\theta \preceq \Delta \atop  \dim(\theta)=1} v(\theta) \cdot v \left(\theta^{*}\right)
=27  + 3 \big( \psi_1\left(\Delta \right) - \frac{1}{2} v\left( \Delta \right) \big)
=24,
\end{align*}
where the last equality holds because 
$v\left( \Delta \right)
= v(\Sigma) = e_{str}(X)
= \sum_{\alpha \in [0,3] \cap \Z} \psi_{\alpha}\left(\Delta \right)$.
\end{proof}

If the  dimension $d$ of a reflexive polytope 
is greater than  $3$, the identity in Theorem \ref{3.10propnew} 
is not anymore a symmetric equation with respect to 
polar duality between $\Delta$ and $\Delta^{*}$.  
The received identities for reflexive polytopes $\Delta$ 
and $\Delta^* $ of dimension $\geq 4$ are
not equivalent to each other. The latter is easy to see in the 
case $d=4$:   

\begin{coro} \label{424}
Let $\Delta \subseteq N_{\R}$ be a $4$-dimensional reflexive polytope. Then the stringy Libgober-Wood indentity is equivalent to 
\begin{align*} 
12 \cdot \left\vert \partial \Delta \cap N \right\vert= 2 \cdot v\left( \Delta \right) 
+  \sum_{\theta \preceq \Delta \atop  \dim(\theta)=2} v(\theta) \cdot v \left(\theta^{*}\right),
\end{align*}
where $\partial \Delta$ denotes the boundary of $\Delta$ and 
$\left\vert \partial \Delta \cap N \right\vert$ the number of lattice points in $\partial \Delta$.
\end{coro}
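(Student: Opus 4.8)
The plan is to specialize Theorem \ref{3.10propnew} to the case $d=4$ and then rewrite both sides in terms of the number of boundary lattice points of $\Delta$.

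First I would evaluate the left-hand side $\sum_{\alpha \in [0,4]\cap\Z}\psi_{\alpha}(\Delta)(\alpha-\tfrac{d}{2})^{2}=\sum_{\alpha \in [0,4]\cap\Z}\psi_{\alpha}(\Delta)(\alpha-2)^{2}$. Using $\psi_{0}(\Delta)=\psi_{4}(\Delta)=1$ together with the symmetry $\psi_{1}(\Delta)=\psi_{3}(\Delta)$ (both supplied by \cite[Theorem 2.11]{Bat93}), and noting that the $\alpha=2$ term drops out, this sum collapses to $4+\psi_{1}(\Delta)+\psi_{3}(\Delta)+4=8+2\psi_{1}(\Delta)$. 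On the right-hand side, Theorem \ref{3.10propnew} gives $\tfrac{4}{12}v(\Delta)+\tfrac{1}{6}\sum_{\theta \preceq \Delta,\, \dim(\theta)=2}v(\theta)\cdot v(\theta^{*})$. Clearing denominators (multiplying by $6$), the stringy Libgober-Wood identity for the associated Gorenstein toric Fano variety $X$ becomes equivalent to $48+12\psi_{1}(\Delta)=2v(\Delta)+\sum_{\theta \preceq \Delta,\, \dim(\theta)=2}v(\theta)\cdot v(\theta^{*})$.

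It then remains to identify $48+12\psi_{1}(\Delta)$ with $12\cdot|\partial\Delta\cap N|$, i.e.\ to prove $\psi_{1}(\Delta)+4=|\partial\Delta\cap N|$. Here I would invoke the Ehrhart-theoretic description of the first coefficient of the numerator of $P_{\Delta}(t)$: for any $d$-dimensional lattice polytope one has $\psi_{1}(\Delta)=|\Delta\cap N|-(d+1)$ (\cite[Theorem 2.11]{Bat93}), so for $d=4$ this reads $\psi_{1}(\Delta)=|\Delta\cap N|-5$. Since $\Delta$ is reflexive, the origin is its unique interior lattice point, whence $|\Delta\cap N|=|\partial\Delta\cap N|+1$ and therefore $\psi_{1}(\Delta)+4=|\partial\Delta\cap N|$. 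Substituting this into the displayed equivalence from the previous step yields $12\cdot|\partial\Delta\cap N|=2v(\Delta)+\sum_{\theta \preceq \Delta,\, \dim(\theta)=2}v(\theta)\cdot v(\theta^{*})$, as claimed.

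I do not expect a genuine obstacle here: the statement follows by direct specialization once Theorem \ref{3.10propnew} is in place. The only ingredients beyond elementary arithmetic are the symmetry and the low-degree interpolation of the coefficients $\psi_{\alpha}(\Delta)$ furnished by \cite[Theorem 2.11]{Bat93}, and the standard fact that a reflexive polytope has exactly one interior lattice point; if anything needs care it is simply keeping track of the factor $6$ when clearing denominators so that the $\psi_{1}$-term comes out as $12\psi_{1}(\Delta)$ rather than some other multiple.
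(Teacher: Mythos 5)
Your proposal is correct and follows essentially the same route as the paper: specialize Theorem \ref{3.10propnew} to $d=4$, compute $\sum_{\alpha}\psi_{\alpha}(\Delta)(\alpha-2)^{2}=8+2\psi_{1}(\Delta)$ using $\psi_{0}=\psi_{4}=1$ and $\psi_{1}=\psi_{3}=\lvert\Delta\cap N\rvert-5$ from \cite[Theorem 2.11]{Bat93}, clear the denominator $6$, and finish with $\lvert\partial\Delta\cap N\rvert=\lvert\Delta\cap N\rvert-1$ from the uniqueness of the interior lattice point of a reflexive polytope. The arithmetic and the cited ingredients match the paper's proof exactly.
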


\begin{proof}
By Theorem \ref{3.10propnew}, we have  
\begin{align*}
\sum_{\alpha \in [0,4] \cap \Z} \psi_{\alpha}\left(\Delta \right)   \left(\alpha-2 \right)^{2} 
= \frac{1}{3} v\left( \Delta \right) + \frac{1}{6}  
\sum_{\theta \preceq \Delta \atop  \dim(\theta)=2} v(\theta) \cdot v \left(\theta^{*}\right).
\end{align*}
Furthermore, $\psi_0\left(\Delta \right)=\psi_4\left(\Delta \right)=1$ and 
$\psi_1\left(\Delta \right)=\psi_3\left(\Delta \right)= \left\vert  \Delta \cap N \right\vert -5$
 \cite[Theorem 2.11]{Bat93}, i.e., we obtain 
\begin{align*}
\sum_{\alpha \in [0,4] \cap \Z} \psi_{\alpha}\left(\Delta \right)   \left(\alpha-2 \right)^{2} = 
8+2 \cdot \left( \left\vert  \Delta \cap N \right\vert -5 \right).
\end{align*}
It remains to apply 
 $\left\vert \partial \Delta \cap N \right\vert =  \left\vert  \Delta \cap N \right\vert -1 $ 
 because a reflexive polytope $\Delta$ has a single 
interior lattice point.
\end{proof}

One may produce a more ``mirror symmetric'' identity 
for arbitrary $4$-di\-men\-sio\-nal reflexive polytopes 
by summing the equations from Corollary \ref{424}
for $\Delta$ and $\Delta^*$.

\begin{coro} \label{424c}
Let $\Delta \subseteq N_{\R}$ be a $4$-dimensional reflexive polytope. Then
 \begin{align*} 
12 \cdot  \left(\left\vert \partial \Delta \cap N \right\vert+ \left\vert \partial \Delta^* \cap M \right\vert \right) &= 2 \cdot \left( v\left( \Delta \right) + v\left( \Delta^* \right)\right)
+  \sum_{\theta \preceq \Delta \atop  \dim(\theta)=1,2} v(\theta) \cdot v \left(\theta^{*}\right).
\end{align*}
\end{coro}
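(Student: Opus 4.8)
The plan is to obtain the identity by simply adding the two instances of Corollary~\ref{424}, one applied to $\Delta$ and one to its polar dual $\Delta^{*}$, and then to recombine the two resulting face sums into a single sum over the $1$- and $2$-dimensional faces of $\Delta$. The only input needed beyond Corollary~\ref{424} is the standard fact that polar duality is an involution on reflexive polytopes: $\Delta^{*} \subseteq M_{\R}$ is again a $4$-dimensional reflexive polytope with $(\Delta^{*})^{*} = \Delta$, so Corollary~\ref{424} applies to $\Delta^{*}$ verbatim, with the roles of $N$ and $M$ (and of $\Delta$ and $\Delta^{*}$) interchanged.

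Concretely, I would first write down the two equations
\begin{align*}
12 \cdot \left\vert \partial \Delta \cap N \right\vert &= 2 \cdot v\left( \Delta \right) + \sum_{\theta \preceq \Delta \atop \dim(\theta)=2} v(\theta) \cdot v\left(\theta^{*}\right), \\
12 \cdot \left\vert \partial \Delta^{*} \cap M \right\vert &= 2 \cdot v\left( \Delta^{*} \right) + \sum_{\eta \preceq \Delta^{*} \atop \dim(\eta)=2} v(\eta) \cdot v\left(\eta^{*}\right),
\end{align*}
the second being Corollary~\ref{424} for the reflexive polytope $\Delta^{*}$, where $\eta^{*}$ denotes the dual face of $\eta$, which is a face of $(\Delta^{*})^{*} = \Delta$. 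I would then invoke the order-reversing bijection $\theta \mapsto \theta^{*}$ between faces of $\Delta$ and faces of $\Delta^{*}$ with $\dim(\theta) + \dim(\theta^{*}) = d-1 = 3$ and $(\theta^{*})^{*} = \theta$: under this bijection the $2$-dimensional faces $\eta$ of $\Delta^{*}$ correspond exactly to the $1$-dimensional faces $\theta = \eta^{*}$ of $\Delta$, with $\eta = \theta^{*}$. Hence the second face sum is
\[
\sum_{\eta \preceq \Delta^{*} \atop \dim(\eta)=2} v(\eta) \cdot v\left(\eta^{*}\right)
= \sum_{\theta \preceq \Delta \atop \dim(\theta)=1} v\left(\theta^{*}\right) \cdot v(\theta)
= \sum_{\theta \preceq \Delta \atop \dim(\theta)=1} v(\theta) \cdot v\left(\theta^{*}\right).
\]
Adding the two displayed equalities and merging the face sums over $\dim(\theta)=2$ and $\dim(\theta)=1$ into a single sum over $\dim(\theta) \in \{1,2\}$ yields the claimed identity.

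I do not expect a genuine obstacle here; the argument is essentially formal once Corollary~\ref{424} is available. The one point that deserves care is the bookkeeping in the duality step: one must check that polar duality is an involution on reflexive polytopes and that the normalized volume $v(\cdot)$ of a dual face is intrinsic (computed with respect to the appropriate affine sublattice), so that the term $v(\eta)\cdot v(\eta^{*})$ produced by Corollary~\ref{424} for $\Delta^{*}$ is literally the term $v(\theta^{*})\cdot v(\theta)$ for $\theta=\eta^{*}$ in the $1$-dimensional part of the face sum for $\Delta$. Both facts are standard for reflexive polytopes and are already implicitly used in the order-reversing correspondence $\theta \preceq \Delta \leftrightarrow \theta^{*} \preceq \Delta^{*}$ recalled before Theorem~\ref{3.10propnew}.
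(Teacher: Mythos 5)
Your proposal is correct and is exactly the route the paper takes: the text preceding Corollary~\ref{424c} states that the identity is obtained ``by summing the equations from Corollary~\ref{424} for $\Delta$ and $\Delta^*$,'' and your careful rewriting of the $2$-dimensional face sum for $\Delta^*$ as the $1$-dimensional face sum for $\Delta$ via the order-reversing duality $\theta \mapsto \theta^*$ is precisely the bookkeeping that makes this work.
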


Let $r$ be a positive integer. A   $d$-dimensional lattice polytope 
$\Delta \subseteq M_{\R}$  is called 
{\em Gorenstein polytope of index} $r$ 
if $r\Delta -m$ is a reflexive polytope for some 
lattice point $m \in M$. 
Note that reflexive polytopes are Gorenstein polytopes of 
index $r=1$. 
Denote by $\Sigma$ the normal fan in $N_{\R}$ of the polytope 
$\Delta$ (or, equivalently, of $r\Delta$). The fan $\Sigma$ 
defines a Gorenstein toric Fano variety $X$ such that its anticanonical 
class $c_1(X)$ is divisible by $r$ in $\Pic(X)$.

There exists a duality for Gorenstein polytopes that generalizes
the polar duality for reflexive polytopes. 
For this purpose, we associate to every
$d$-dimensional  Gorenstein polytope $\Delta \subseteq M_{\R}$ of index $r$
the $(d+1)$-dimensional cone
\begin{align*}
C_{\Delta} \defeq \left\{ (y,\lambda) \in   
M_{\R} \oplus \R\; \middle\vert \; 
y \in \lambda \Delta \right\} \subseteq M_{\R} \oplus \R.
\end{align*}
The dual cone $C_{\Delta}^{\vee}  \subseteq N_{\R} \oplus \R$ is defined as 
$$C_{\Delta}^{\vee} \defeq \{ (x, \mu) \in N_{\R} \oplus \R
\; \vert \;  \left<y,x \right> + \lambda \mu \geq 0 \; \forall (y, \lambda)
 \in C_{\Delta} \}$$ 
and the $l$-th slice $C_{\Delta}^{\vee}(l)$ of $C_{\Delta}^{\vee}$ is defined as
the lattice polytope 
\begin{align*}
C_{\Delta}^{\vee}(l) \defeq C_{\Delta}^{\vee} \cap 
\{ (x, \mu) \in N_{\R} \oplus \R \; \vert \;  \left<m,x \right> + r\mu = l \}\subseteq N_{\R} \oplus \R.
\end{align*}
The lattice polytope $\Delta^* \defeq C_{\Delta}^{\vee}(1)$ is again
 a Gorenstein polytope of index $r$ and is called
{\em dual Gorenstein polytope} to $\Delta$.
The duality between two $(d+1)$-dimensional cones $C_{\Delta}$ and 
$C_{\Delta}^{\vee}$
 establishes a one-to-one order-reversing 
correspondence between faces of $C_{\Delta}$
and $C_{\Delta}^{\vee}$ that induces a duality between faces of the 
Gorenstein polytopes  $\Delta$
and $\Delta^* = C_{\Delta}^{\vee}(1)$.
It is important to note that 
the reflexive polytope $(r\Delta)^*$ and the Gorenstein 
polytope $\Delta^*$ are not only naturally 
combinatorially isomorphic, but this isomorphism also induces
isomorphisms between proper 
faces of $(r\Delta)^*$ and $\Delta^*$ considered as lattice polytopes 
\cite{BN08}.

The fan $\Sigma$ in $N_\R$ can be constructed via the 
projection $$N_\R \oplus \R \to (N_\R \oplus \R)/ \R(n,r) \cong N_\R$$   
 of all proper faces of  the cone $C_{\Delta}^{\vee}$ 
along the  $1$-dimensional subspace generated by the unique interior lattice
point $(n, r)$ in the reflexive polytope $C_{\Delta}^{\vee}(r)$. 

It is well-known \cite[Corollary 7.10]{BD96} that the stringy Euler number 
of a generic Calabi-Yau hypersurface $Z$ in a Gorenstein toric 
Fano variety $X$ can be computed via volumes of faces $\theta \preceq \Delta$ and 
$\theta^*\preceq \Delta^*$ of $d$-dimensional 
reflexive 
polytopes $\Delta$ respectively $\Delta^*$ 
as 
\begin{align*}
e_{str}\left( Z \right) = c_{d-1}^{str} \left( Z \right)
= \sum_{k=0}^{d-3}  (-1)^{k} \sum_{\theta \preceq \Delta \atop \dim\left(\theta\right)=k+1}v(\theta) \cdot v\left(\theta^*\right). 
\end{align*} 

Using our previous results and the duality between 
faces $\theta \preceq \Delta$ and 
$\theta^*\preceq \Delta^*$ of Gorenstein 
polytopes $\Delta$ respectively $\Delta^*$, we can
generalize this combinatorial formula  
to the case of generic Calabi-Yau complete intersections
in Gorenstein toric Fano varieties. 

\begin{theo} \label{ecomref}
Let $X$ be a Gorenstein toric Fano variety 
associated with a $d$-di\-men\-sio\-nal 
Gorenstein polytope 
$\Delta \subseteq M_{\R}$ of index $r$ and 
$D$ an ample torus-invariant 
Cartier divisor on $X$ such that 
$\left[D\right] = \frac{1}{r}c_1(X)$.
Denote by $Z_1, \ldots, Z_r \subseteq X$
generic semiample Cartier divisors such that 
$\left[Z_1\right]= \ldots= \left[Z_{r}\right] = \left[D\right]= \frac{1}{r}c_1(X)$. 
Then the stringy Euler number of the Calabi-Yau complete intersection 
$S \defeq Z_1\cap \ldots \cap Z_{r}$ is 
\begin{align*}
c_{d-r}^{str}(S) = 
\sum_{k=0}^{d-r-1}
(-1)^k  \binom{k  +  r  -  1 }{r  - 1} \! \! \! \! 
\sum_{\theta \preceq \Delta \atop \dim(\theta)=k+r} 
v(\theta)  \cdot v\left(\theta^*\right) 
+ (-1)^{d-r}  \binom{ d - 1 }{r - 1} v(\Delta). 
\end{align*}
\end{theo}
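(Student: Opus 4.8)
First I would note that $S$ has trivial canonical class: the adjunction formula gives $K_S = (K_X + Z_1 + \cdots + Z_r)\vert_S$, and $[Z_1] + \cdots + [Z_r] = r[D] = c_1(X) = -[K_X]$, so $K_S \sim 0$, $\dim S = d-r$, and $c^{str}_{d-r}(S)$ is the stringy Euler number of $S$. Since $[Z_1] = \cdots = [Z_r] = [D]$, Corollary \ref{corcompl2} combined with Theorem \ref{ZkCn-k} (equivalently, Corollary \ref{ecomb2}), applied to the ample torus-invariant Cartier divisor $D$, yields
\begin{align*}
c^{str}_{d-r}(S) = \sum_{k=0}^{d-r} (-1)^k \binom{k+r-1}{r-1} \sum_{\sigma \in \Sigma(d-r-k)} v(\sigma) \cdot v\big(\Delta_D^{\sigma}\big).
\end{align*}
Here $\Sigma$ is the normal fan of the full-dimensional lattice polytope $\Delta_D$. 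Since $rD \sim -K_X$ and $r\Delta - m = \Delta_{-K_X}$ (two reflexive polytopes with the same normal fan coincide), the polytope $\Delta_D$ is a translate of $\Delta$ in $M_{\R}$; hence for $\sigma \in \Sigma(d-r-k)$ the face $\Delta_D^{\sigma}$ is a translate of the face $\theta \preceq \Delta$ of dimension $r+k$ dual to $\sigma$, and $v(\Delta_D^{\sigma}) = v(\theta)$ because normalized volumes are translation invariant. Cones of $\Sigma(d-r-k)$ correspond bijectively to faces of $\Delta$ of dimension $r+k$.

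The key step is then the identity $v(\sigma(\theta)) = v(\theta^*)$ for every proper face $\theta \prec \Delta$, where $\sigma(\theta) \in \Sigma$ is the cone dual to $\theta$ and $\theta^* \preceq \Delta^*$ is the dual face of the dual Gorenstein polytope. To prove it, set $P \defeq r\Delta - m$, a reflexive polytope whose normal fan is again $\Sigma$; then $\Sigma$ is the face fan of the reflexive polytope $P^* \subseteq N_{\R}$, and $\sigma(\theta)$ is the cone over the face $F \defeq (r\theta - m)^{\diamond} \prec P^*$ dual to $r\theta - m$. As a proper face of the reflexive polytope $P^*$, the face $F$ lies in a facet-supporting hyperplane $\{\,x \mid \langle u, x\rangle = -1\,\}$ with $u \in M$, and the vertices of $F$ are lattice points of $N$ on which $u$ takes the value $-1$; therefore the restriction of $u$ to $N \cap \langle\sigma(\theta)\rangle_{\R}$ is primitive, which forces $\Theta_{\sigma(\theta)} = \conv(0, F)$ to be a lattice pyramid of height one over $F$, so the pyramid volume formula gives $v(\sigma(\theta)) = v(F)$. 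Finally, by the Batyrev--Nill duality \cite{BN08} the faces $F \prec P^*$ and $\theta^* \prec \Delta^*$ are isomorphic as lattice polytopes, whence $v(F) = v(\theta^*)$.

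It then remains to assemble the pieces: in the displayed sum the terms with $0 \leq k \leq d-r-1$ range over proper faces $\theta$ of dimension $r+k$, so the key identity turns the inner sum into $\sum_{\dim\theta = r+k} v(\theta)\,v(\theta^*)$, while the term $k = d-r$ involves only $\sigma = \{0\}$ with $v(\{0\}) = 1$ and $\Delta_D^{\{0\}} = \Delta_D$, contributing $(-1)^{d-r}\binom{d-1}{r-1} v(\Delta)$; this is exactly the asserted formula. I expect the key identity $v(\sigma(\theta)) = v(\theta^*)$ to be the main obstacle: it rests on the lattice pyramid computation — which uses that proper faces of a reflexive polytope lie at lattice distance one from the origin in the relevant sublattice — together with the compatibility of the Batyrev--Nill correspondence with the lattice structure on proper faces. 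The remaining steps are routine bookkeeping with the binomial coefficients and the face--cone dictionary of the normal fan.
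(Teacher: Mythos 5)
Your proposal is correct and follows essentially the same route as the paper's own proof: apply Corollary \ref{ecomb2}, identify $\Delta_D$ with $\Delta$ and $\Delta_D^{\sigma}$ with the face $\theta$ dual to $\sigma$, use the identification of $v(\sigma)$ with $v(\theta^*)$ via the reflexive polytope $(r\Delta)^*$ and the Batyrev--Nill face correspondence, and treat the $k=d-r$ term (the zero cone) separately. The only difference is that you spell out the lattice-pyramid argument behind $v(\sigma)=v(F)$, which the paper leaves implicit by simply invoking that $\sigma$ is the cone over a proper face of the reflexive polytope $(r\Delta)^*$.
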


\begin{proof} 
By Corollary \ref{ecomb2},  we have 
\begin{align*}
c_{d-r}^{str}\left( S\right) 
&= \sum_{k=0}^{d-r} (-1)^k \binom{k+r-1}{r-1} \sum_{\sigma \in \Sigma(d-r-k)}  v(\sigma) 
\cdot v \left(\Delta_D^{\sigma}\right),
\end{align*}
where $\Sigma$ is the associated fan to $X$ and
$\Delta_D^{\sigma}$ a face
of the lattice polytope $\Delta_D$. 
Let $\sigma$ be a  $(d-r-k)$-dimensional cone 
of $\Sigma$ ($0 \leq k \leq d-r-1$). Then 
$\sigma$ can be considered as a cone over a 
$(d-r-k-1)$-dimensional proper face of the reflexive 
polytope $(r\Delta)^*$, which we naturally identify with 
the corresponding proper face  $\theta^* \preceq \Delta^*$ of 
the dual Gorenstein polytope $\Delta^*$ \cite[Proposition 1.16]{BN08}. 
Therefore, we obtain
$v(\sigma) = v(\theta^*)$. On the other hand, the lattice polytope 
$\Delta_D$ is exactly the Gorenstein polytope $\Delta$ and 
 $\theta^*$ is the dual face to a $(k+r)$-dimensional face 
$\Delta_D^{\sigma} = \theta$ of $\Delta$. 
This implies 
\begin{align*}
\sum_{\sigma \in \Sigma(d-r-k)}  v(\sigma) 
\cdot v \left(\Delta_D^{\sigma}\right)
=  \sum_{\theta^* \preceq \Delta^* \atop \dim\left(\theta^*\right)=d-k-r-1}  
v\left(\theta^*\right) \cdot v(\theta)
=  \sum_{\theta \preceq \Delta \atop \dim\left(\theta \right)=k+r}  
v\left(\theta \right) \cdot v\left(\theta^*\right)
\end{align*}
for all $0 \leq k \leq d-r-1$. 
It remains to note that in the case  $k=d-r$ (i.e., $\dim (\sigma)=0$),
one has $\Delta_D^{\sigma} = \Delta$, 
$v \left(\Delta_D^{\sigma}\right) = v(\Delta)$, and $v(\sigma) =1$. 
\end{proof}

The combinatorial formula from Corollary \ref{324}
\begin{align*}
24 = \sum_{\theta \preceq \Delta \atop \dim(\theta)=1} v(\theta) \cdot v\left(\theta^* \right)
\end{align*}
for a $3$-dimensional reflexive polytope $\Delta$ can be 
generalized for arbitrary $d$-dimensional Gorenstein 
polytopes $\Delta$ $(d \geq 3)$ of index $r= d-2$. 

\begin{prop} \label{24delta}
Let $\Delta \subseteq M_{\R}$ be a $d$-dimensional Gorenstein polytope of index $r = d-2$. 
Then
\[24 = \sum_{\theta \preceq \Delta \atop \dim(\theta)=r} v(\theta) \cdot v\left(\theta^* \right)  
+ \frac{r(1-r)}{2} v(\Delta). \]
\end{prop}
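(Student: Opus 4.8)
The plan is to realize the surface with stringy Euler number $24$ as a complete intersection to which Theorem \ref{ecomref} applies, and then to unwind that theorem in the case $r = d - 2$. First I would set up the geometry. Let $X$ be the Gorenstein toric Fano variety attached to the $d$-dimensional Gorenstein polytope $\Delta$ of index $r = d-2$; since $c_1(X)$ is divisible by $r$ in $\Pic(X)$, there is an ample torus-invariant Cartier divisor $D$ on $X$ with $\Delta_D = \Delta$ and $[D] = \tfrac1r c_1(X)$. Choosing generic semiample Cartier divisors $Z_1, \dots, Z_r$ with $[Z_1] = \dots = [Z_r] = [D]$ and putting $S \defeq Z_1 \cap \dots \cap Z_r$, adjunction gives $K_S = 0$ and $\dim S = d - r = 2$, so $S$ is a Calabi--Yau surface.

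Next I would determine the value of $c_{d-r}^{str}(S) = c_2^{str}(S) = e_{str}(S)$. Since $S$ has at worst Gorenstein canonical singularities it admits a crepant resolution $\widehat S \to S$, a smooth minimal Calabi--Yau surface; because generic Calabi--Yau complete intersections in Gorenstein toric Fano varieties are simply connected (cf. \cite{BB97, BD96}), $\widehat S$ is forced to be a $K3$ surface, and hence $e_{str}(S) = e(\widehat S) = 24$. For $r = 1$, i.e. $d = 3$, this is the classical statement that a generic anticanonical hypersurface is a $K3$ surface, and the identity to be proved degenerates to Corollary \ref{324}.

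It then remains to expand the combinatorial side. Inserting $r = d-2$ into Theorem \ref{ecomref}, the outer sum runs over $k \in \{0,1\}$ together with the single terminal term for $k = d - r = 2$; using $\binom{r-1}{r-1} = 1$, $\binom{r}{r-1} = r$, $(-1)^{d-r} = 1$ and $\binom{d-1}{r-1} = \binom{d-1}{2}$, Theorem \ref{ecomref} reads
\begin{align*}
24 = \sum_{\theta \preceq \Delta \atop \dim(\theta)=r} v(\theta)\, v(\theta^*) - r \sum_{\theta \preceq \Delta \atop \dim(\theta)=r+1} v(\theta)\, v(\theta^*) + \binom{d-1}{2}\, v(\Delta).
\end{align*}
A face $\theta \preceq \Delta$ with $\dim(\theta) = r+1 = d-1$ is a facet, and its dual $\theta^*$ is a vertex of $\Delta^*$, so $v(\theta^*) = 1$ and $\sum_{\dim(\theta) = d-1} v(\theta)\, v(\theta^*) = \sum_{F} v(F)$, the sum over facets $F$ of $\Delta$. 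Since $r\Delta$ is a lattice translate of a reflexive polytope, each of its facets lies at lattice distance $1$ from its interior lattice point, so $v(r\Delta) = \sum_{G} v(G)$ over the facets $G$ of $r\Delta$; comparing $v(r\Delta) = r^{d} v(\Delta)$ with $v(G) = v(rF) = r^{d-1} v(F)$ yields $\sum_{F} v(F) = r\, v(\Delta)$. Substituting this and using $\binom{d-1}{2} - r^2 = \tfrac{r(1-r)}{2}$ (true because $d = r+2$) gives
\begin{align*}
24 = \sum_{\theta \preceq \Delta \atop \dim(\theta)=r} v(\theta)\, v(\theta^*) + \frac{r(1-r)}{2}\, v(\Delta),
\end{align*}
which is the asserted identity.

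The only non-combinatorial ingredient, and the step I expect to demand the most care, is the equality $c_2^{str}(S) = 24$: it hinges on $\widehat S$ being a $K3$ surface rather than an abelian, Enriques or bielliptic surface, i.e. on the simple connectivity of generic Calabi--Yau complete intersections in toric Fano varieties. Everything else is routine bookkeeping with binomial coefficients together with the elementary volume identity $\sum_{F} v(F) = r\, v(\Delta)$ for facets of a Gorenstein polytope of index $r$.
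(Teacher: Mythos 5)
Your proof is correct and follows essentially the same route as the paper's: realize a generic Calabi--Yau complete intersection $S=Z_1\cap\ldots\cap Z_r$ as a (possibly singular) $K3$ surface with $c_2^{str}(S)=24$, expand Theorem \ref{ecomref} for $d-r=2$, and eliminate the facet sum via $\sum_{\dim\theta=d-1}v(\theta)=r\,v(\Delta)$ coming from the reflexivity of $r\Delta$. Your extra remark on why the crepant resolution is a $K3$ rather than another minimal surface with trivial canonical class is a point the paper passes over silently, but otherwise the two arguments coincide step for step.
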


\begin{proof}
Let  $S \defeq  Z_1 \cap \ldots \cap Z_r$ be a  generic 
Calabi-Yau complete intersection in the  Gorenstein toric Fano variety 
$X$ associated to $\Delta$, i.e.,  
$Z_1,\ldots,Z_r$ 
are $r$ generic ample Cartier divisors on $X$ such that
$\left[Z_1\right] = \ldots = \left[Z_r\right] = 
\frac{1}{r}c_1\left({X}\right)$. Then $\dim (S) =2$, i.e., 
$S$ is a  (possibly singular) $K3$-surface. The stringy Euler
number $c_2^{str}(S)$ of $S$ equals the usual Euler number $c_2(\widetilde{S})$
of the minimal (crepant) desingularization $\widetilde{S}$ of $S$. 
Since $\widetilde{S}$
is  a smooth $K3$-surface, we have $c_2^{str}(S) =c_2(\widetilde{S})=24$.  
Using Theorem \ref{ecomref}, we obtain 
\begin{align*}
24 = c_{2}^{str}\left(S \right)  = \sum_{\theta \preceq \Delta \atop \dim(\theta)=r}v(\theta) \cdot v\left(\theta^*\right)
-r \cdot \sum_{\theta \preceq \Delta \atop \dim(\theta)=d-1}v(\theta) \cdot v\left(\theta^*\right)
+\frac{(r+1)r}{2} v(\Delta).
\end{align*}
Since $r\Delta$ is a reflexive polytope, one has 
\[  r^d v(\Delta) =   v(r\Delta) =  \sum_{r\theta \preceq r\Delta 
\atop \dim(r\theta)=d-1} v(r\theta) = r^{d-1} 
\sum_{\theta \preceq \Delta \atop \dim(\theta)=d-1} v(\theta). \]
Moreover, $v(\theta^*)=1$ if $\dim(\theta^*) =0$.  
It remains to apply the equalities 
\begin{align*}
\sum_{\theta \preceq \Delta \atop \dim(\theta)=d-1}
v(\theta) \cdot v\left(\theta^*\right)
=\sum_{\theta \preceq \Delta \atop \dim(\theta)=d-1}
v(\theta) 
=r v(\Delta).
\end{align*}
\end{proof}

It was proved in \cite[Proposition 3.4]{BJ10} that 
the combinatorial identity 
\begin{align*}
12 = \sum_{\theta \preceq \Delta \atop \dim(\theta)=1} v(\theta) 
\cdot v\left(\theta^* \right)
\end{align*}
holds for any $3$-dimensional Gorenstein polytope $\Delta$  
of index $2$. We show that  this identity  can be 
generalized for arbitrary $d$-dimensional Gorenstein 
polytopes $\Delta$ $(d \geq 3)$  of index $r= d-1$.

\begin{prop}
Let $\Delta \subseteq M_{\R}$ be a $d$-dimensional Gorenstein polytope of index $r = d-1$. 
Then
\[12 = \sum_{\theta \preceq \Delta \atop \dim(\theta)=r-1} v(\theta) \cdot v\left(\theta^* \right) + \frac{r(1-r)+2}{2} v\left(\Delta \right). \]
\end{prop}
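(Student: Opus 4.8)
The plan is to follow the strategy of the proof of Proposition~\ref{24delta}, but to replace the appeal to the Euler number of a $K3$ surface by an application of the stringy Libgober--Wood identity~\eqref{eq1.2} to a surface. Let $X$ be the Gorenstein toric Fano variety whose fan $\Sigma$ is the normal fan of $\Delta$, and let $D$ be the ample torus-invariant Cartier divisor with $\Delta_D = \Delta$, so that $[D] = \tfrac{1}{r}c_1(X)$. Choose sufficiently generic ample Cartier divisors $Z_1,\dots,Z_r$ with $[Z_1]=\dots=[Z_r]=[D]$ and set
\[ S' \defeq Z_1\cap\dots\cap Z_{r-1}, \qquad S \defeq Z_1\cap\dots\cap Z_r = Z_r\cap S'. \]
Since $r=d-1$, the variety $S'$ is a surface and $S$ a curve. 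By Bertini, $S'$ is an irreducible normal projective Gorenstein variety (a complete intersection of Cartier divisors in the Gorenstein variety $X$) with at worst log-terminal, hence canonical, hence Du Val singularities, and $S$ is smooth. Iterated adjunction gives $[K_{S'}] = \big([K_X]+[Z_1]+\dots+[Z_{r-1}]\big)\vert_{S'} = -[D]\vert_{S'}$, so $-K_{S'}$ is ample, $S'$ is a (possibly singular) log del Pezzo surface, and $S$ is a Calabi--Yau curve, i.e.\ a smooth elliptic curve with $c_1^{str}(S) = e_{str}(S) = e(S) = 0$.

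First I would determine the left-hand side of the stringy Libgober--Wood identity for $S'$. As the singularities of $S'$ are Du Val, the minimal resolution $\pi\colon W\to S'$ is crepant, so $E_{str}(S';u,v) = E(W;u,v)$; moreover $-K_W = \pi^*(-K_{S'})$ is nef and big, so $W$ is a weak del Pezzo surface, hence rational, with $q = p_g = 0$. Therefore $E_{str}(S';u,v) = 1 + h^{1,1}(W)\,uv + (uv)^2$ and
\[ \frac{d^2}{d u^2}E_{str}\left(S';u,1\right)\Big\vert_{u=1} = 2. \]
Inserting this into \eqref{eq1.2} for the $2$-dimensional variety $S'$, and using $\tfrac{3\cdot4-5\cdot2}{12}=\tfrac16$, gives the key equation
\[ 12 = c_2^{str}(S') + c_1(S').c_1^{str}(S'). \]

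Next I would compute the two summands on the right combinatorially. For the intersection number, apply Corollary~\ref{esing2} to the generic ample Cartier divisor $Z_r$ on the surface $S'$: since $Z_r\cap S'=S$ and $c_1^{str}(S)=0$, this gives $[Z_r]\vert_{S'}.c_1^{str}(S') = [Z_r]^2\vert_{S'}$, whence, using $c_1(S') = -[K_{S'}] = [D]\vert_{S'} = [Z_r]\vert_{S'}$,
\[ c_1(S').c_1^{str}(S') = [Z_r]^2\vert_{S'} = [D]^{r+1} = [D]^d = v(\Delta). \]
For $c_2^{str}(S')$ I would apply Corollary~\ref{ecomb2} to the $r-1$ divisors $Z_1,\dots,Z_{r-1}$ on $X$ together with Theorem~\ref{ZkCn-k}, expressing the result through $[D]^{d-2}.c_2^{str}(X)$, $[D]^{d-1}.c_1^{str}(X)$ and $v(\Delta)$. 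By the same face identification as in the proof of Theorem~\ref{ecomref} ($\Sigma$ is the face fan of the reflexive polytope $(r\Delta)^{*}$, whose proper faces are identified as lattice polytopes with those of $\Delta^{*}$ by \cite{BN08}) one has $[D]^{d-2}.c_2^{str}(X) = \sum_{\theta\preceq\Delta,\,\dim(\theta)=d-2} v(\theta)\,v(\theta^{*})$, and by the argument in the proof of Proposition~\ref{24delta} (reflexivity of $r\Delta$) one has $[D]^{d-1}.c_1^{str}(X) = \sum_{\theta\preceq\Delta,\,\dim(\theta)=d-1} v(\theta) = r\,v(\Delta)$. Collecting the binomial coefficients produced by Corollary~\ref{ecomb2} then yields
\[ c_2^{str}(S') = \sum_{\theta\preceq\Delta \atop \dim(\theta)=r-1} v(\theta)\cdot v(\theta^{*}) - \frac{r(r-1)}{2}\,v(\Delta). \]

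Substituting the last two displays into $12 = c_2^{str}(S') + c_1(S').c_1^{str}(S')$ and using $1-\tfrac{r(r-1)}{2} = \tfrac{r(1-r)+2}{2}$ gives the asserted identity; for $d=3$, $r=2$ it recovers the result of \cite{BJ10} quoted above, while $r=1$, $d=2$ reproduces Corollary~\ref{212}. The main obstacle is the geometric input of the second paragraph: one must verify carefully that for sufficiently generic $Z_i$ the surface $S'$ is irreducible with only Du Val singularities and that its minimal resolution is a weak del Pezzo surface (so that the stringy Hodge number $h^{2,0}_{str}(S')$ vanishes and the left-hand side of the Libgober--Wood identity equals $2$); everything else is bookkeeping with intersection numbers on $X$ via Sections~\ref{section1}--\ref{section3} and the reflexivity of $r\Delta$.
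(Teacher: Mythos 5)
Your proposal is correct and follows essentially the same route as the paper's proof: the same generic complete intersection surface $S'=Z_1\cap\ldots\cap Z_{r-1}$ with $[Z_i]=\tfrac{1}{r}c_1(X)$, the same identification $c_1(S').c_1^{str}(S')=[D]^d=v(\Delta)$, and the identical combinatorial evaluation of $c_2^{str}(S')$ via Corollary~\ref{ecomb2} and the face duality of Gorenstein polytopes. The only (inessential) deviation is that the paper produces the constant $12$ from Noether's formula $12=c_2(\widetilde{S'})+c_1(\widetilde{S'})^2$ on the minimal crepant resolution, whereas you derive the equivalent relation $12=c_2^{str}(S')+c_1(S').c_1^{str}(S')$ from the stringy Libgober--Wood identity \eqref{eq1.2} (and reach $c_1(S').c_1^{str}(S')=v(\Delta)$ via the auxiliary elliptic curve $S$ rather than directly by adjunction); for a Gorenstein surface with canonical singularities these are the same statement, and both arguments rest on the same genericity input that $S'$ is a normal del Pezzo surface whose minimal resolution is rational.
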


\begin{proof}
Let  $S \defeq  Z_1 \cap \ldots \cap Z_{r-1}$ be a  generic 
complete intersection in the  Gorenstein toric Fano variety 
$X$ associated to $\Delta$, i.e.,  
$Z_1,\ldots,Z_{r-1}$ 
are $r-1$ generic ample Cartier divisors on $X$ such that
$[D ] \defeq \left[Z_1\right] = \ldots = \left[Z_{r-1}\right] = 
\frac{1}{r}c_1\left({X}\right)$. 
Then $\dim (S) =2$, i.e., 
$S$ is a  (possibly singular) del Pezzo surface. The stringy Euler
number $c_2^{str}(S)$ of $S$ equals the usual Euler number $c_2(\widetilde{S})$
of the minimal (crepant) desingularization $\widetilde{S}$ of $S$.
 Then $\widetilde{S}$
is a smooth rational surface and  we have $c_2^{str}(S) =c_2(\widetilde{S})$ 
respectively $c_1(S)^2 =c_1(\widetilde{S})^2$. Using Noether's Theorem for 
$\widetilde{S}$, we obtain 
\begin{align*}
12 = c_2(\widetilde{S}) + c_1(\widetilde{S})^2  
= c_2^{str} \left( S \right)+c_1 \left( S \right)^2.  
\end{align*}
It remains to derive combinatorial formulas for $c_1 \left( S \right)^2$ and 
$ c_2^{str} \left( S \right)$.
By the adjunction formula, the anticanonical class of $S$ is the 
restriction of $c_1(X) - (r-1)[D]= [D]$  to $S$. Therefore, 
$c_1 \left( S \right)^2 =[D]^2 . [D]^{r-1} = [D]^d = v(\Delta)$.   
By  Corollary \ref{ecomb2}, we have 
\begin{align*}
c_{2}^{str}\left(S \right) 
&= \sum_{\sigma \in \Sigma(2) }v(\sigma) \cdot v\left(\Delta_D^{\sigma}\right)
-(r-1) \cdot \sum_{\sigma \in \Sigma(1) }v(\sigma) \cdot v\left(\Delta_D^{\sigma}\right)
+\frac{r(r-1)}{2} v(\Delta_D). 
\end{align*}
Using the same arguments  as in the 
proof of Theorem \ref{ecomref}, we can rewrite the above equation as 
\begin{align*}
c_{2}^{str}\left(S \right) &= \sum_{\theta \preceq \Delta \atop \dim(\theta)=d-2}v(\theta) \cdot v\left(\theta^*\right)
-(r-1) \cdot \sum_{\theta \preceq \Delta \atop \dim(\theta)=d-1}v(\theta) \cdot v\left(\theta^*\right)
+\frac{r(r-1)}{2} v(\Delta). 
\end{align*}
Using the last equation 
\begin{align*}
\sum_{\theta \preceq \Delta \atop \dim(\theta)=d-1}
v(\theta) \cdot v\left(\theta^*\right)
=r v(\Delta) 
\end{align*}
in the 
proof of Proposition  \ref{24delta}, we obtain 
\begin{align*}
c_2^{str} \left( S \right) 
&= \sum_{\theta \preceq \Delta \atop \dim(\theta)=r-1}  v(\theta)   \cdot v\left(\theta^* \right)
+ \frac{r(1-r)}{2} v\left(\Delta \right).
\end{align*}
\end{proof}


\end{document}